\newtheorem{theorem}{Theorem}[section]
\newtheorem{lemma}{Lemma}[section]
\newtheorem{corollary}{Corollary}[section]
\newtheorem{proposition}{Proposition}[section]
\theoremstyle{definition}
\newtheorem{definition}{Definition}[section]
\newtheorem{remark}{Remark}[section]
\newtheorem{example}{Example}[section]
\newcommand*{\id}{{\normalfont\hbox{1\kern-0.15em \vrule width .8pt depth-.5pt}}}
\newcommand{\abs}[1]{\ensuremath{\left| #1 \right|}}
\newcommand{\norm}[1]{\ensuremath{\left\| #1 \right\|}}
\newcommand{\pa}[1]{\ensuremath{\left( #1 \right)}}
\newcommand{\set}[1]{\ensuremath{\left\{ #1 \right\}}}
\begin{document}
\title{Global existence for vector valued fractional reaction-diffusion equations
\thanks{Received date, and accepted date (The correct dates will be entered by the editor).}}

\author{Agust\'in Besteiro\thanks{Instituto de Matem\'atica Luis Santal\'o, CONICET--UBA, 
Ciudad Universitaria, Pabell\'on I (1428) Buenos Aires, Argentina, (abesteiro@dm.uba.ar).}
\and Diego Rial \thanks{Instituto de Matem\'atica Luis Santal\'o, CONICET--UBA and Departamento de Matemática,
Facultad de Ciencias Exactas y Naturales, Universidad de Buenos Aires.
Ciudad Universitaria, Pabellón I (C1428EGA) Buenos Aires, Argentina, (drial@dm.uba.ar).}}

\pagestyle{myheadings} \markboth{Global existence for fractional reaction-diffusion equations}{A. Besteiro and D. Rial} \maketitle

\begin{abstract}
In this paper, we study the initial value problem for infinite dimensional fractional non-autonomous reaction-diffusion equations. 
Applying general time-splitting methods, we prove the existence of solutions globally defined in time using convex sets as invariant 
regions. We expose examples, where biological and pattern formation systems, under suitable assumptions, achieve global existence. 
We also analyze the asymptotic behavior of solutions.
\end{abstract}
     
\providecommand{\keywords}[1]{\textbf{\textit{Keywords---}} #1}
     
\begin{keywords} 
Fractional diffusion, global existence, Lie--Trotter method.
\end{keywords}

\providecommand{\AMS}[1]{\textbf{\textit{AMS code---}} #1}
\begin{AMS} 
35K55; 35K57; 35R11; 35Q92; 92D25
\end{AMS}

\section{Introduction}
\label{intro}
In this paper, we prove global existence of solutions for vector valued fractional non-autonomous reaction-diffusion equations. 
That is, we study the non autonomous system
\begin{equation}
\label{eq: reaction-diffusion}
\partial_{t}u + \sigma(-\Delta)^{\beta} u = F(t,u),
\end{equation}
where $u(t,x)\in Z$ for $x\in\mathbb{R}^{n}$, $t>0$, $\sigma \ge 0$ and $0 < \beta \le 1$, $F:\mathbb{R} \times Z \to Z$ 
a continuous map and $Z$ a Banach space. We consider the initial problem $u(x,0)=u_{0}(x)$.

The aim of this paper is to develop a new method to obtain behavioral results on the fractional reaction diffusion equation, 
using recent numerical splitting techniques (\cite{Borgna2015}, \cite{DeLeo2015}) introduced for other purposes. 
The main results of this paper are to obtain general conditions for well posedness of the fractional reaction diffusion equation 
in Banach spaces. 

Fractional reaction-diffusion equations are commonly used on many applications such as biological models, population dynamics models, 
nuclear reactor models, just to name a few (for references to examples see \cite{Baeumer2007}). 
The difference between classical and fractional diffusion is that the classical Laplacian term associated with classical
diffusion implies a Gaussian dispersal kernel in the corresponding equation, which does not represent all possible models in practice. 
The fractional model captures the faster spreading rates and power law invasion profiles observed in many applications. 
The main reason for this behavior is given by the fractional Laplacian, that is described by standard theories of fractional calculus (for a complete survey see \cite{Machado2011}). There are many different equivalent definitions 
of the fractional Laplacian and its behavior is well understood (see \cite{Bucur2016}, \cite{DiNezza2012}, \cite{Kwasnicki2017}, 
\cite{Lischke2018}, \cite{Silvestre2005}, \cite{Pozrikidis2016} and \cite{Landkof1972}). 

The non-autonomous nonlinear reaction diffusion equation dynamics were studied by \cite{Robinson2007} and others, 
analyzing the stability and evolution of the problem.
Global existence in reaction-diffusion equations in bounded sets were studied in the book by Smoller \cite{Smoller1983} and 
in \cite{Chueh1977} where it is considered the $n-$dimensional case with classical diffusion and the intersection of half spaces 
as invariant regions in $\mathbb{R}^{n}$, in which the equation evolves. The case of taking a convex set as an invariant set, 
is considered only when the diffusion coefficients $\sigma_{ij} \in \mathbb{R}^{n \times n}$ is the identity matrix 
(see Corollary 14.8 (b) in \cite{Smoller1983}). Morgan \cite{Morgan1989} considered a similar case in which 
$\sigma_{ij}$ is not the identity matrix, but other conditions are needed over the system to achieve the result. 
These techniques have been used recently many times to obtain global well posedness for different classical diffusion problems
(see \cite{Abdelmalek2014}, \cite{Bendoukha2016}, \cite{Meftah2016}, and \cite{Slavik2017}). 
Motivated by this, in this paper, we study global existence of fractional-diffusion equations using a completely different approach. 
We use time splitting methods in Banach spaces taking closed convex sets as invariant regions. 

As an example, we explore the scalar system where the nonlinearity is given by
$F(u) = (1 + i a)u-(1 + i b)|u|^{2}u$ with $a,b\in\mathbb{R}$ (see \cite{Chueh1977}, \cite{Weimar1994} and \cite{Cartwright2007}).
For particular nonlinearities exact solutions are known, for example, in \cite{Kumar2012} was studied the existence of scalar 
traveling waves for the quadratic, cubic and quartic cases by the tanh method. 
We also explore a FitzHugh Nagumo pattern formation system in $\mathbb{R}^2$ and a population dynamic system in a Banach space. 
In both cases we found an appropriate invariant region that allows us to prove global existence in each case. 
Finally, we also analyze the asymptotic behavior of solutions in the real line.

This paper is organized as follows:
\begin{description}
\item[- Section 2] We introduce the notation and prove some preliminary results concerning the linear and non linear parts 
of the fractional reaction diffusion equation.
\item[- Section 3] We introduce the propagators, allowing us to construct a splitting reaction diffusion equation. 
This is important to build up the linear part.
\item[- Section 4] We obtain several results for finally proving that the "splitting" equation converges to the "original" 
equation. This allow us to study the splitting equation, that is, to study separately the linear and non linear parts, 
in order to obtain interesting results on the original equation.
\item[- Section 5] We prove global well posedness for invariant closed convex sets of a Banach space. 
We prove that the linear and non linear parts of the splitting equation, independently maintain the solution inside the convex set. 
The results from section \ref{section4} extend this result to the "original" equation. 
We give some examples such us the Ginzburg-Landau equation and the Fisher-Kolmogorov equation.
\item[- Subsection 5.1] We expose an interesting example, a population dynamics model, 
where we have a trait variable in a Banach Space. This will show the importance of extending the results to Banach Spaces.
\item[- Subsection 5.2] We generalize the results in the beginning of section \ref{section5}
by proving well posedness for products of Banach Spaces.
\item[- Section 6] We show how powerful splitting methods are, by analyzing a completely different problem, 
the asymptotic behavior of a solution. The strategy is again, to split the linear and nonlinear parts, and analyze them separately, 
for then finally use the results in section \ref{section4} and \ref{section5}.
\end{description}


\section{Notations and Preliminaries.}\label{Sec.2}
 We are interested in continuous functions to vectorial values, that is to say, whose evaluations take values in Banach Spaces. 
 The main reason for this, is to analyze well posedness of population dynamics problems with discrete or continuous traits, 
 that distinguish the population components (see subsection \ref{subsec: population dynamics}).

Let $Z$ be a Banach space, we define $C_{\rm u}(\mathbb{R}^{d},Z)$ as
the set of uniformly continuous and bounded functions on $\mathbb{R}^{d}$ with values in $Z$. Taking the norm
\begin{equation*}
\|u\|_{\infty,Z} = \sup_{x\in\mathbb{R}^{d}}|u(x)|_{Z},
\end{equation*}
$C_{\rm u}(\mathbb{R}^{d},Z)$ is a Banach space. 
It is easy to see that if $g \in L^1(\mathbb{R}^d)$ and $u \in C_u(\mathbb{R}^d,Z)$ the Bochner integral is defined in the following way,
\begin{align*}
\pa{g*u}(x) = \int_{\mathbb{R}^{d}} g(y)u(x-y) dy
\end{align*}
This defines an element of $C_{\rm u}(\mathbb{R}^{d},Z)$ and the linear operator $u\mapsto g*u$ is continuous (see \cite{Cazenave1998}).

The following results show that the operator $-(-\Delta)^{\beta}$ defines a continuous contraction semigroup 
in the Banach space $C_{\rm u}(\mathbb{R}^{d},Z)$.
The following lemma is a consequence of L\'evy--Khintchine formula for infinitely divisible distributions and 
the properties of the Fourier transform.

\begin{lemma}
Let $0<\beta \le 1$ and $g_{\beta}\in C_{0}(\mathbb{R}^{d})$ such that $\hat{g}_{\beta}(\xi) = e^{-|\xi|^{2\beta}}$,
it holds $g_{\beta}$ is positive, invariant under rotations of $\mathbb{R}^{d}$, integrable and
\begin{align*}
\int_{\mathbb{R}^{d}}g_{\beta}(x)dx = 1.
\end{align*}
\end{lemma}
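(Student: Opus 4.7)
The plan is to verify each of the four claimed properties separately, treating positivity, which is the only non-trivial assertion, via the Lévy--Khintchine framework suggested by the authors.

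\textbf{Rotational invariance and the value of the integral.} These are immediate from the Fourier-analytic definition. Since $\hat g_\beta(\xi)=e^{-|\xi|^{2\beta}}$ depends on $\xi$ only through $|\xi|$, for every $R\in O(d)$ one has $\hat g_\beta(R\xi)=\hat g_\beta(\xi)$. Because the inverse Fourier transform intertwines with orthogonal substitutions (the Jacobian has absolute value $1$ and $|R\xi|=|\xi|$), $g_\beta$ inherits the same invariance. Moreover, $e^{-|\xi|^{2\beta}}\in L^1(\mathbb{R}^d)\cap L^2(\mathbb{R}^d)$ for every $\beta>0$, so $g_\beta$ is a well-defined element of $C_0(\mathbb{R}^d)$ by the Riemann--Lebesgue lemma, and the Fourier inversion formula evaluated at $x=0$ gives $\int_{\mathbb{R}^d}g_\beta(x)\,dx=\hat g_\beta(0)=1$, which settles the total-mass statement (once positivity is known this also yields $g_\beta\in L^1$ and hence integrability).

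\textbf{Positivity via Lévy--Khintchine.} This is the heart of the argument and the step I expect to be the only real obstacle. The idea is to identify $e^{-|\xi|^{2\beta}}$ as the characteristic function of a probability measure, so that positivity of $g_\beta$ comes for free from the density-versus-distribution correspondence. For $0<2\beta\le 2$ the map $\xi\mapsto |\xi|^{2\beta}$ is a continuous negative-definite function vanishing at $0$: the Lévy--Khintchine formula furnishes an explicit representation
\begin{equation*}
|\xi|^{2\beta}=c_{d,\beta}\int_{\mathbb{R}^d\setminus\{0\}}\bigl(1-\cos(\xi\cdot y)\bigr)\frac{dy}{|y|^{d+2\beta}}
\qquad (0<\beta<1),
\end{equation*}
while the case $\beta=1$ is the Gaussian limit. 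Consequently $t\mapsto e^{-t|\xi|^{2\beta}}$ is, for every $t\ge 0$, the characteristic function of an infinitely divisible probability distribution on $\mathbb{R}^d$ (the symmetric $2\beta$-stable law). Specialising to $t=1$ and invoking Fourier inversion, which is legitimate since $\hat g_\beta\in L^1$, the corresponding probability measure admits a continuous density which must coincide with $g_\beta$; hence $g_\beta\ge 0$.

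\textbf{Assembling the conclusion.} Once positivity is established, the identity $\int g_\beta=1$ obtained above doubles as the proof of integrability. Rotational invariance and membership in $C_0(\mathbb{R}^d)$ were already in hand, so the four assertions of the lemma are proven. The only delicate point is genuinely the positivity; everything else reduces to formal properties of the Fourier transform applied to a radial Schwartz-class multiplier.
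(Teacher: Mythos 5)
Your argument is essentially the paper's own proof, only written out in full: the authors likewise obtain positivity by recognizing $e^{-|\xi|^{2\beta}}$ as the characteristic function of a rotation-invariant $2\beta$-stable (infinitely divisible) law, citing Theorem 14.14 of Sato for that step, and dismiss the remaining claims as immediate from the definition of $\hat g_\beta$. The one point you share with the paper in leaving implicit is that the lemma asserts strict positivity (used later for $G_{\sigma,\beta}>0$), while Fourier inversion of a probability density only yields $g_\beta\ge 0$; everywhere-positivity of symmetric stable densities is a further, standard fact about the support of stable laws.
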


\begin{proof}
The first statement follows from Theorem 14.14 of \cite{Sato1999},
the remaining claims are immediate from the definition of $\hat{g}_{\beta}$.
\end{proof}

Based on the previous lemma, we study Green's function associated to the linear operator $\partial_{t} + \sigma(-\Delta)^{\beta}$.

\begin{proposition}
Let $\sigma > 0$ and $0<\beta \le 1$, the function $G_{\sigma,\beta}$ given by 
\begin{align*}
G_{\sigma,\beta}(t,x) = (\sigma t)^{-\frac{n}{2\beta}} g_{\beta}((\sigma t)^{-\frac{1}{2\beta}} x),
\end{align*}
verifies
\begin{enumerate}[i.]
\item $G_{\sigma,\beta}(.,t)>0$;
\item $G_{\sigma,\beta}(.,t)\in L^{1}(\mathbb{R}^{d})$ and 
\begin{align*}
\int_{\mathbb{R}^{d}}G_{\sigma,\beta}(t,x)dx = 1;
\end{align*}
\item \label{it: semi} $G_{\sigma,\beta}(\cdot,t)*G_{\sigma,\beta}(\cdot,t') = G_{\sigma,\beta}(\cdot,t+t')$, for $t,t'>0$;
\item $\partial_{t}G_{\sigma,\beta} + \sigma (-\Delta)^{\beta} G_{\sigma,\beta} = 0$ for $t>0$.
\end{enumerate}
\end{proposition}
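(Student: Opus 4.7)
The plan is to verify the four properties in order, using the previous lemma (which identifies $g_\beta$ as the Fourier inverse of $e^{-|\xi|^{2\beta}}$ and establishes its positivity and unit mass) together with elementary scaling and Fourier calculus.

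For (i) and (ii), I would start with purely algebraic manipulations. Since $g_\beta \ge 0$ by the lemma and $(\sigma t)^{-n/(2\beta)} > 0$, positivity is immediate. For the $L^1$ claim, I would apply the change of variables $y = (\sigma t)^{-1/(2\beta)} x$, whose Jacobian is exactly $(\sigma t)^{n/(2\beta)}$, so
\begin{equation*}
\int_{\mathbb{R}^{d}} G_{\sigma,\beta}(t,x)\, dx = \int_{\mathbb{R}^{d}} g_\beta(y)\, dy = 1,
\end{equation*}
using the lemma.

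For (iii) and (iv), I would switch to the Fourier side, which is where the scaling factor was chosen to be convenient. The same change of variables (now in the definition of the Fourier transform) gives
\begin{equation*}
\widehat{G_{\sigma,\beta}(\cdot,t)}(\xi) = \hat{g}_\beta\bigl((\sigma t)^{1/(2\beta)} \xi\bigr) = \e^{-\sigma t |\xi|^{2\beta}}.
\end{equation*}
Property (iii) then reduces to the identity $\e^{-\sigma t |\xi|^{2\beta}} \cdot \e^{-\sigma t' |\xi|^{2\beta}} = \e^{-\sigma (t+t') |\xi|^{2\beta}}$, combined with the convolution theorem (justified because both factors lie in $L^1 \cap L^\infty$ of $\mathbb{R}^d$, so convolution is well defined and its Fourier transform is the product). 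For (iv), I would differentiate the Fourier representation in $t$, obtaining $-\sigma |\xi|^{2\beta} \widehat{G_{\sigma,\beta}(\cdot,t)}(\xi)$, and recognize this as $-\sigma$ times the Fourier symbol of $(-\Delta)^\beta$ applied to $G_{\sigma,\beta}$.

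The only mildly delicate step is (iv), where one must justify exchanging $\partial_t$ with the inverse Fourier transform and interpret $(-\Delta)^\beta G_{\sigma,\beta}$ in an admissible sense. For fixed $t > 0$, the Gaussian-type decay of $\e^{-\sigma t |\xi|^{2\beta}}$ and its derivatives in $\xi$ makes $G_{\sigma,\beta}(\cdot,t)$ a Schwartz function (or at least sufficiently smooth and rapidly decaying), so that $(-\Delta)^\beta$ acts on it via the Fourier multiplier $|\xi|^{2\beta}$ without ambiguity, and differentiation under the integral sign is easily dominated for $t$ in any compact subinterval of $(0,\infty)$. Everything else is a direct computation.
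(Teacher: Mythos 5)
Your proposal follows essentially the same route as the paper, which simply asserts that (i)--(ii) follow from the definition of $\hat{g}_{\beta}$ and that (iii)--(iv) are immediate by Fourier transform; you have merely filled in the scaling computation $\widehat{G_{\sigma,\beta}(\cdot,t)}(\xi) = \e^{-\sigma t |\xi|^{2\beta}}$ and the resulting multiplier identities, all of which are correct. One small inaccuracy: for $\beta<1$ the symbol $\e^{-\sigma t|\xi|^{2\beta}}$ is not smooth at $\xi=0$, so $G_{\sigma,\beta}(\cdot,t)$ has power-law tails and is \emph{not} Schwartz; this is harmless because $|\xi|^{2\beta}\e^{-\sigma t|\xi|^{2\beta}}$ is still integrable in $\xi$, so $(-\Delta)^{\beta}G_{\sigma,\beta}$ is well defined on the Fourier side exactly as you use it.
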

\begin{proof}
The first and second statements are a consequence of the definition of $\hat{g}_{\beta}$.
The third and fourth statements are immediate applying Fourier transform. 
\end{proof}

In the following proposition, we show that the linear operator $-\sigma(-\Delta)^{\beta}$ defines a contraction continuous semigroup in the set $C_{\rm u}(\mathbb{R}^{d},Z)$.

\begin{proposition}
For any $\sigma > 0$ and $0<\beta \le 1$, the map ${\sf S}:\mathbb{R}_{+}\to \mathcal{B}(C_{\rm u}(\mathbb{R}^{d},Z))$ defined by
${\sf S}(t)u = G_{\sigma,\beta}(.,t)*u$ is a continuous contraction semigroup.
\end{proposition}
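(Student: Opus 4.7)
The plan is to verify the three defining properties of a $C_0$ contraction semigroup on the Banach space $C_{\rm u}(\mathbb{R}^{d},Z)$: (a) each $\mathsf{S}(t)$ is a contraction from $C_{\rm u}(\mathbb{R}^{d},Z)$ into itself, (b) the semigroup identity $\mathsf{S}(t)\mathsf{S}(t')=\mathsf{S}(t+t')$ with $\mathsf{S}(0)=\mathrm{Id}$, and (c) strong continuity $\mathsf{S}(t)u\to u$ as $t\to 0^+$ in the sup norm (and the latter immediately upgrades to continuity on all of $\mathbb{R}_{+}$ by the semigroup and contraction properties).

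For (a), since the previous proposition gives $G_{\sigma,\beta}(\cdot,t)\in L^{1}(\mathbb{R}^{d})$, the Bochner convolution observation from the preliminaries already places $\mathsf{S}(t)u$ in $C_{\rm u}(\mathbb{R}^{d},Z)$. The contraction bound follows from positivity and unit mass of $G_{\sigma,\beta}(\cdot,t)$: for every $x$,
\begin{equation*}
|(\mathsf{S}(t)u)(x)|_{Z}
\le \int_{\mathbb{R}^{d}} G_{\sigma,\beta}(t,y)\,|u(x-y)|_{Z}\,dy
\le \|u\|_{\infty,Z}.
\end{equation*}

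For (b), the identity $\mathsf{S}(t)\mathsf{S}(t')=\mathsf{S}(t+t')$ for $t,t'>0$ is exactly associativity of Bochner convolution combined with item~\ref{it: semi} of the previous proposition; one sets $\mathsf{S}(0)=\mathrm{Id}$ by convention, consistent with the $t\to 0^+$ limit handled next.

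For (c), the key trick is the self-similar scaling
\begin{equation*}
(\mathsf{S}(t)u)(x)-u(x)
= \int_{\mathbb{R}^{d}} g_{\beta}(z)\bigl[u\bigl(x-(\sigma t)^{\frac{1}{2\beta}}z\bigr)-u(x)\bigr]dz,
\end{equation*}
obtained from the change of variables $y=(\sigma t)^{1/(2\beta)}z$, together with the fact that $g_{\beta}\ge 0$ has unit mass. Given $\varepsilon>0$, pick $R$ large so that the tail integral $\int_{|z|>R}g_{\beta}(z)\,dz$ contributes at most $\varepsilon\|u\|_{\infty,Z}$; then use uniform continuity of $u$ to find $\delta>0$ with $|u(x-h)-u(x)|_{Z}<\varepsilon$ whenever $|h|<\delta$, uniformly in $x$, and take $t$ small enough that $(\sigma t)^{1/(2\beta)}R<\delta$. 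This bounds $\|\mathsf{S}(t)u-u\|_{\infty,Z}$ by a constant multiple of $\varepsilon$, which is the desired strong continuity at zero.

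The only subtle step is (c): one must ensure the estimate is uniform in $x$, which is precisely where the assumption $u\in C_{\rm u}(\mathbb{R}^{d},Z)$ (rather than merely bounded and continuous) is indispensable. The $Z$-valued nature of $u$ does not create extra difficulty because the Bochner integral satisfies the same norm estimate $|\int f(y)\,dy|_{Z}\le\int|f(y)|_{Z}\,dy$ used above.
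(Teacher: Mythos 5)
Your proof is correct and follows essentially the same route as the paper: the semigroup identity from the convolution property of $G_{\sigma,\beta}$, the contraction bound from positivity and unit mass, and strong continuity at $0$ by combining uniform continuity of $u$ with the vanishing tail of $g_{\beta}\in L^{1}$ (your rescale-then-split is the same estimate as the paper's split-then-rescale, with cutoff $\delta=(\sigma t)^{1/(2\beta)}R$). The only cosmetic difference is that you delegate well-definedness of $\mathsf{S}(t)u$ in $C_{\rm u}(\mathbb{R}^{d},Z)$ to the preliminary remark on Bochner convolution, whereas the paper re-derives the uniform continuity directly.
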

\begin{proof}
We first prove the semigroup property, which is deduced from \ref{it: semi} of the previous proposition:
\begin{align*}
{\sf S}(t){\sf S}(t')u & = G_{\sigma,\beta}(\cdot,t)*(G_{\sigma,\beta}(\cdot,t')*u) \\
& = (G_{\sigma,\beta}(\cdot,t)*(G_{\sigma,\beta}(\cdot,t'))*u = (G_{\sigma,\beta}(\cdot,t+t'))*u = {\sf S}(t+t')u
\end{align*}

We show that ${\sf S}(t)u$ converges to $u$ for all $u\in C_{\rm u}(\mathbb{R}^{d},Z)$ when $t\to 0$. Indeed, we have for $\delta >0$,
\begin{align*}
|({\sf S}(t)u)(x)-u(x)|_{Z} & \le 
\int_{\mathbb{R}^{n}}G_{\sigma,\beta}(y,t)|u(x-y)-u(x)|_{Z}dy \\
& = \int_{|y|<\delta}G_{\sigma,\beta}(y,t)|u(x-y)-u(x)|_{Z}dy \\
& +\int_{|y|\ge \delta}G_{\sigma,\beta}(y,t)|u(x-y)-u(x)|_{Z}dy.
\end{align*}
The first integral of the right side of the equality can be estimated as follows:
\begin{align*}
\int_{|y|<\delta}G_{\sigma,\beta}(y,t)|u(x-y)-u(x)|_{Z}dy 
& \le \int_{\mathbb{R}^{n}}G_{\sigma,\beta}(y,t)\max_{|y|<\delta}|u(x-y)-u(x)|_{Z}dy \\
& = \max_{|y|<\delta}|u(x-y)-u(x)|_{Z}
\end{align*}
This can be small enough because, $|y|<\delta$ and $u$ is uniformly continuous. For the second term we proceed in the following way,
\begin{align*}
\int_{|y|\ge \delta}G_{\sigma,\beta}(y,t)|u(x-y)-u(x)|_{Z}dy 
& = 2\|u\|_{\infty}(\sigma t)^{-\frac{n}{2\beta}} \int_{|y|\ge \delta} g_{\beta}((\sigma t)^{-\frac{1}{2\beta}} y) dy \\
& = 2\|u\|_{\infty} \int_{|y|\ge \delta(\sigma t)^{-1/(2\beta)}} g_{\beta}(y) dy
\end{align*}
Since $\delta(\sigma t)^{-1/(2\beta)} \to\infty$ when $t\to 0^{+}$ and $g_{\beta}\in L^{1}(\mathbb{R}^{d})$, the right side of the previous equality tends to $0$. 
The next property proves that ${\sf S}$ is well defined, that is ${\sf S}u\in C_{\rm u}(\mathbb{R}^{d},Z)$.
\begin{align*}
|({\sf S}(t)u)(x_{1})-({\sf S}(t)u)(x_{2})|_{Z} & \le 
\int_{\mathbb{R}^{n}}G_{\sigma,\beta}(y,t)|u(x_{1}-y)-u(x_{2}-y)|_{Z}dy \\
& \le \varepsilon\int_{\mathbb{R}^{n}}G_{\sigma,\beta}(y,t)dy = \varepsilon,
\end{align*}
In the last inequality we used that $u$ is uniformly continuous.
And, finally we prove the contraction semigroup property:
\begin{align*}
|({\sf S}(t)u)(x)|_{Z} \le \int_{\mathbb{R}^{n}}G_{\sigma,\beta}(y,t)|u(x-y)|_{Z}dy \le \|u\|_{\infty}.
\end{align*}
\end{proof}
\begin{remark}
If $u\in C_{\rm u}(\mathbb{R}^{d},Z)$ is a constant, then ${\sf S}(t)u=u$.
\end{remark}
 In this paper, we consider integral solutions of the problem \eqref{eq: reaction-diffusion}.
We say that $u\in C([0,T],C_{\rm u}(\mathbb{R}^{d},Z))$ is a mild solution of \eqref{eq: reaction-diffusion}
iff $u$ verifies
\begin{align}
\label{eq: mild solution}
u(t) = {\sf S}(t)u_{0} + \int_{0}^{t} {\sf S}(t-t')F(t',u(t')) dt'.
\end{align}
Since our method to build solutions of \eqref{eq: mild solution} is based on the application of the Lie-Trotter method, it is necessary to study the non-linear problem associated with $ F $. 
We remark that some regularity condition is necessary for convergence, as it is shown in the counterexample given in \cite{Canzi2012}.

Let $F:\mathbb{R}_{+} \times Z \to Z$ be a continuous map, we say that is locally Lipschitz 
in the second variable if, given $R,T>0$ there exists $L = L(R,T)>0$ such that if $t\in [0,T]$ and $z,\tilde{z}\in Z$ 
with $|z|_{Z},|\tilde{z}|_{Z} \le R$, then
\begin{align*}
|F(t,z) - F(t,\tilde{z})|_{Z} \le L |z - \tilde{z}|_{Z}.
\end{align*}
In this case, for any $z_{0}\in Z$
there exists a unique (maximal) solution of the Cauchy problem
\begin{align}
\label{eq: integral equation}
z(t) = z_{0} + \int_{t_{0}}^{t}F(t',z(t'))dt'
\end{align}
defined on $[t_{0},t_{0}+T^{*}(t_{0},z_{0}))$, with $T^{*}(t_{0},z_{0})$ is the maximal time of existence.
It is easy to see that 
there exists a nonincreasing function $\mathcal{T}:\mathbb{R}_{+}^{2} \to \mathbb{R}_{+}$,
such that
\begin{align*}
 \mathcal{T}(T,R)\le \inf\{T^{*}(t_{0},z_{0}):0\le t_{0} \le T, |z_{0}|_{Z} \le R\}.
\end{align*}
Also, one of the following alternatives holds:
\begin{itemize}
\item[-] $T^{*}(t_{0},z_{0}) = \infty$;
\item[-] $T^{*}(t_{0},z_{0}) < \infty$ and $|z(t)|_{Z} \to \infty$ when $t \uparrow t_{0} + T^{*}(t_{0},z_{0})$.
\end{itemize}
We can see that $F:\mathbb{R}_{+}\times C_{\rm u}(\mathbb{R}^{d},Z)\to C_{\rm u}(\mathbb{R}^{d},Z)$,
given by $F(t,u)(x) = F(t,u(x))$ is continuous and locally Lipschitz in the second variable.
Therefore, we can consider problem \eqref{eq: integral equation} in $C_{\rm u}(\mathbb{R}^{d},Z)$.
%
We denote by $\mathsf{N}:\mathbb{R} \times \mathbb{R} \times C_{\rm u}(\mathbb{R}^{d},Z)\to C_{\rm u}(\mathbb{R}^{d},Z)$ the flow 
generated by the integral equation \eqref{eq: integral equation} as $u(t)=\mathsf{N}(t,t_0,u_0)$,
defined for $t_{0} \le t < t_{0} + T^{*}(t_{0},u_{0})$.

The following result relates the solutions of \eqref{eq: integral equation} with the problem \eqref{eq: mild solution} in the case of having constant initial data.

\begin{proposition}
\label{pr: constant}
If $u_{0}$ is a constant function, then $u(t) = \mathsf{N}(t,t_0,u_0)$ is a solution of \eqref{eq: mild solution}.
\end{proposition}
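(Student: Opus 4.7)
The plan is to exploit the pointwise definition of $F$ together with the remark that ${\sf S}(t)$ fixes spatially constant functions. First I would observe that if $v\in C_{\rm u}(\mathbb{R}^d,Z)$ is constant in $x$, then so is $F(t,v)$, because $F(t,v)(x)=F(t,v(x))$ depends on $x$ only through $v(x)$. Consequently, the integrand in
\begin{align*}
z(t)=z_0+\int_{t_0}^{t}F(t',z(t'))\,dt'
\end{align*}
preserves the property of being a constant function, and a standard uniqueness argument (or direct inspection of the Picard iterates in $C_{\rm u}(\mathbb{R}^d,Z)$) shows that $u(t)=\mathsf{N}(t,t_0,u_0)$ is spatially constant for every $t$ in its interval of existence.

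Next I would apply the remark: since $u(t)$ is constant in $x$ for each fixed $t$, so is $F(t,u(t))$, and therefore
\begin{align*}
{\sf S}(t-t')F(t',u(t'))=F(t',u(t'))\quad\text{and}\quad {\sf S}(t)u_0=u_0.
\end{align*}
Substituting into the right-hand side of \eqref{eq: mild solution} (taking $t_0=0$ without loss of generality, since ${\sf S}$ is a time-homogeneous semigroup) gives
\begin{align*}
{\sf S}(t)u_0+\int_{0}^{t}{\sf S}(t-t')F(t',u(t'))\,dt'
=u_0+\int_{0}^{t}F(t',u(t'))\,dt',
\end{align*}
and by the definition of $\mathsf{N}$ as the flow of \eqref{eq: integral equation}, the right-hand side equals $u(t)$.

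There is no real obstacle here; the argument is essentially bookkeeping once one recognizes two facts: (a) $F$ is defined pointwise, so it preserves the subspace of spatially constant functions, and (b) that subspace lies in the kernel of ${\sf S}(t)-\mathrm{Id}$ by the preceding remark. The only thing one needs to be mildly careful about is the exchange of the semigroup with the Bochner integral, but since the integrand is a constant function in $x$ (and continuous in $t'$), the identity ${\sf S}(t-t')F(t',u(t'))=F(t',u(t'))$ can be pulled inside the integral sign trivially, without invoking any nontrivial commutation property.
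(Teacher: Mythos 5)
Your proof is correct and follows essentially the same route as the paper's: both use uniqueness for \eqref{eq: integral equation} to conclude that $u(t)$ stays spatially constant, and then invoke the remark that ${\sf S}(t)$ fixes constant functions to rewrite $u_0+\int_0^t F(t',u(t'))\,dt'$ as ${\sf S}(t)u_0+\int_0^t{\sf S}(t-t')F(t',u(t'))\,dt'$. Your version simply spells out the intermediate steps (pointwise definition of $F$, applying ${\sf S}$ under the Bochner integral) that the paper leaves implicit.
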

\begin{proof}
Since $u_{0}$ is a constant function, from the uniqueness of the problem \eqref{eq: integral equation}, we have
$u(t)$ is a constant function for any $t>0$ where the solution is defined. Therefore,
\begin{align*}
u(t) & = u_{0} + \int_{0}^{t} F(t',u(t')) dt'
= {\sf S}(t)u_{0} + \int_{0}^{t} {\sf S}(t-t')F(t',u(t')) dt',
\end{align*}
which proves our assertion.
\end{proof}

\begin{theorem}
	\label{th: local existence}
	There exists a  function $T^{*}:C_{\rm u}(\mathbb{R}^{d},Z)\to \mathbb{R}_{+}$ such that
	for $u_{0}\in C_{\rm u}(\mathbb{R}^{d},Z)$, exists a unique $u\in C([0,T^{*}(u_{0})),C_{\rm u}(\mathbb{R}^{d},Z))$ mild solution
	of \eqref{eq: reaction-diffusion} with $u(0) = u_{0}$. Moreover, one of the following alternatives holds:
	\begin{itemize}
		\item $T^{*}(u_{0}) = \infty$;
		\item $T^{*}(u_{0}) < \infty$ and $\lim_{t \uparrow T^{*}(u_{0})}\|u(t)\|_{C_{\rm u}(\mathbb{R}^{d},Z)} = \infty$.
	\end{itemize}
\end{theorem}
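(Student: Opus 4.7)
The strategy is a standard Picard/Banach fixed-point argument on the Duhamel formula, followed by a continuation argument for the blow-up dichotomy. The key ingredients have already been established: $\mathsf{S}$ is a contraction semigroup on $C_{\mathrm u}(\mathbb{R}^d,Z)$, and the Nemytskii-type extension of $F$ to $C_{\mathrm u}(\mathbb{R}^d,Z)$ is continuous and locally Lipschitz in the second variable.

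For local existence, fix $u_0\in C_{\mathrm u}(\mathbb{R}^d,Z)$, set $R=2\|u_0\|_{\infty,Z}+1$, and choose $L=L(R,1)$ and $M=\sup\{\|F(t,v)\|_{\infty,Z}:0\le t\le 1,\ \|v\|_{\infty,Z}\le R\}$ from the local Lipschitz property (plus continuity in $t$). For $0<T\le 1$ define
\begin{equation*}
X_T=\{u\in C([0,T],C_{\mathrm u}(\mathbb{R}^d,Z)):\|u(t)-\mathsf{S}(t)u_0\|_{\infty,Z}\le R-\|u_0\|_{\infty,Z}\ \forall t\in[0,T]\},
\end{equation*}
a closed subset of the Banach space $C([0,T],C_{\mathrm u}(\mathbb{R}^d,Z))$, and consider
\begin{equation*}
\Phi(u)(t)=\mathsf{S}(t)u_0+\int_0^t \mathsf{S}(t-s)F(s,u(s))\,ds.
\end{equation*}
Using $\|\mathsf{S}(t)\|\le 1$ one checks $\|\Phi(u)(t)-\mathsf{S}(t)u_0\|_{\infty,Z}\le TM$ and $\|\Phi(u)(t)-\Phi(\tilde u)(t)\|_{\infty,Z}\le TL\|u-\tilde u\|_{C([0,T])}$. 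Choosing $T=T^*(u_0)$ small enough so that $TM\le R-\|u_0\|_{\infty,Z}$ and $TL<1$ makes $\Phi$ a contraction from $X_T$ into itself, so Banach's theorem produces a unique fixed point, which is the mild solution. Continuity in $t$ of $t\mapsto \mathsf{S}(t)u_0$ (from the strong continuity of $\mathsf{S}$) and of the Duhamel term (standard estimate using continuity of $s\mapsto F(s,u(s))$ and strong continuity of $\mathsf{S}$) ensures $u\in C([0,T],C_{\mathrm u}(\mathbb{R}^d,Z))$.

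For uniqueness on any common interval of existence and for the maximal extension, one applies the same Picard argument starting at any time $t_0$ with initial datum $u(t_0)$: this yields a length of existence depending only on $\|u(t_0)\|_{\infty,Z}$ and on $t_0$, analogous to the function $\mathcal T$ already introduced for the ODE in $Z$. Gluing local solutions gives a maximal time $T^*(u_0)\in(0,\infty]$ and a unique mild solution on $[0,T^*(u_0))$.

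For the dichotomy, suppose $T^*(u_0)<\infty$ and, for contradiction, that $\liminf_{t\uparrow T^*(u_0)}\|u(t)\|_{\infty,Z}=:R_0<\infty$. Then along some sequence $t_n\uparrow T^*(u_0)$ one has $\|u(t_n)\|_{\infty,Z}\le R_0+1$; applying the local existence result at time $t_n$ with radius $R_0+2$ gives a uniform lower bound $\tau>0$ on the existence time starting from $u(t_n)$, depending only on $R_0$ and on an upper bound for $t_n$. For $n$ large, $t_n+\tau>T^*(u_0)$, and by uniqueness the newly constructed solution extends $u$ past $T^*(u_0)$, contradicting maximality. Hence $\|u(t)\|_{\infty,Z}\to\infty$ as $t\uparrow T^*(u_0)$.

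The main obstacle is purely bookkeeping: verifying that the uniform-in-initial-time lower bound on the local existence length really exists in this Banach-space-valued setting. This reduces to checking that the Lipschitz and boundedness constants for the Nemytskii extension of $F$ inherit the local uniformity from $F$ itself, which follows directly from the pointwise definition $F(t,u)(x)=F(t,u(x))$ and the hypothesis that $F:\mathbb{R}_+\times Z\to Z$ is continuous and locally Lipschitz in the second variable.
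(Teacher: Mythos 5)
Your proposal is correct and is essentially the argument the paper relies on: the paper gives no proof of its own but cites Theorem 4.3.4 of Cazenave--Haraux, whose proof is exactly this Banach fixed-point argument on the Duhamel map over the tube $X_T$, followed by gluing and the uniform-lower-bound continuation argument for the blow-up alternative. The only cosmetic slip is writing ``$T=T^{*}(u_{0})$'' for the first local existence time, which should be a provisional $T$ since $T^{*}(u_{0})$ is later defined as the maximal time; the substance is fine.
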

\begin{proof}
	See Theorem 4.3.4 in \cite{Cazenave1998}.
\end{proof}

\begin{proposition}
	\label{pr: continuous dependence}
	Under conditions of theorem above, then
	\begin{enumerate}
		\item $T^{*}:C_{\rm u}(\mathbb{R}^{d},Z)\to \mathbb{R}_{+}$ is lower semi-continuous;
		\item If $u_{0,n} \to u_{0}$ in $C_{\rm u}(\mathbb{R}^{d},Z)$ and $0 < T < T^{*}(u_{0})$, then 
		$u_{n} \to u$ in the Banach space $C([0,T],C_{\rm u}(\mathbb{R}^{d},Z))$.
	\end{enumerate}
\end{proposition}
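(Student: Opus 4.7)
The plan is to establish part (2) first and then deduce part (1) as an immediate corollary. For (2), fix $T$ with $0<T<T^{*}(u_{0})$, let $M=\sup_{t\in[0,T]}\|u(t)\|_{\infty,Z}$, choose $R>M$, and let $L=L(R,T)$ be the Lipschitz constant of the induced map $F:\mathbb{R}_{+}\times C_{\rm u}(\mathbb{R}^{d},Z)\to C_{\rm u}(\mathbb{R}^{d},Z)$ on the ball of radius $R$ over the time interval $[0,T]$.

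The key step is a continuation argument. Given $u_{0,n}\to u_{0}$, let $u_{n}$ be the mild solution with datum $u_{0,n}$ on its maximal interval $[0,T^{*}(u_{0,n}))$, and set
\begin{equation*}
\tau_{n}=\sup\bigl\{t\in[0,\min(T,T^{*}(u_{0,n}))) : \|u_{n}(s)\|_{\infty,Z}\le R \text{ for all } s\in[0,t]\bigr\}.
\end{equation*}
For $t<\tau_{n}$, subtracting the mild formulations \eqref{eq: mild solution} for $u$ and $u_{n}$ and using that ${\sf S}$ is a contraction yields
\begin{equation*}
\|u(t)-u_{n}(t)\|_{\infty,Z}\le \|u_{0}-u_{0,n}\|_{\infty,Z}+L\int_{0}^{t}\|u(s)-u_{n}(s)\|_{\infty,Z}\,ds,
\end{equation*}
so Gr\"onwall's inequality gives $\|u(t)-u_{n}(t)\|_{\infty,Z}\le \|u_{0}-u_{0,n}\|_{\infty,Z}\,e^{LT}$ for all $t<\tau_{n}$. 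Pick $n$ large enough that $\|u_{0}-u_{0,n}\|_{\infty,Z}\,e^{LT}<R-M$; then $\|u_{n}(t)\|_{\infty,Z}<R$ throughout $[0,\tau_{n})$, so by the blow-up alternative of Theorem~\ref{th: local existence} we must have $\tau_{n}=T$ and $T^{*}(u_{0,n})>T$. The Gr\"onwall bound then delivers the desired convergence in $C([0,T],C_{\rm u}(\mathbb{R}^{d},Z))$.

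Part (1) follows directly from (2): if $u_{0,n}\to u_{0}$ and $T<T^{*}(u_{0})$, the argument above shows $T^{*}(u_{0,n})>T$ for all sufficiently large $n$, hence $\liminf_{n}T^{*}(u_{0,n})\ge T$; letting $T\uparrow T^{*}(u_{0})$ gives lower semi-continuity.

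The main obstacle is the apparent circularity between the Lipschitz estimate (which requires a uniform a priori bound on $u_{n}$) and the conclusion that $u_{n}$ is defined and stays close to $u$ on all of $[0,T]$. The definition of $\tau_{n}$ combined with the blow-up alternative is precisely what breaks this loop; once that is in place, the remainder is a routine Gr\"onwall argument relying only on the contraction property of ${\sf S}$ and the local Lipschitz continuity of $F$.
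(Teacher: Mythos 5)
Your argument is correct; the paper itself gives no proof of this proposition, simply citing Proposition 4.3.7 of \cite{Cazenave1998}, and your continuation-plus-Gr\"onwall argument is precisely the standard proof one finds there (a priori bound on a maximal subinterval, blow-up alternative to rule out early termination, then Gr\"onwall for the convergence and part (1) as a corollary). The only step worth making explicit is the short continuity argument showing $\tau_{n}=\min(T,T^{*}(u_{0,n}))$ before the blow-up alternative is invoked, but this is routine.
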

\begin{proof}
	See Proposition 4.3.7 in \cite{Cazenave1998}.
\end{proof}


\section{Propagators}
 \label{section3}

To build the approximate solutions, we decompose the time variable in regular intervals and consider the evolution, in an alternate form, of the linear and non linear problem. To achieve this, we \emph{turn on and off} each term of the equation. The first step, is to consider the abstract linear problem, 
\begin{align*}
& \partial_{t}u - \alpha(t) A u = 0, \\
& u(s) = u_{0},
\end{align*}
where $\alpha(t)\ge 0$ and $A$ is the infinitesimal generator of ${\sf S}$, a strongly continuous semigroup of operators defined in the Banach space $X$. The mild solution of the non autonomous problem can be written as $u(t) = {\sf S}_{\alpha}(t,s) u_{0} = {\sf S}(\tau(t,s)) u_{0}$, where 
$\tau$ is defined by
\begin{align*}
 \tau(t,s) = \int_{s}^{t}\alpha(t')dt'
\end{align*}
Formally, we have $\partial_{t}u = \partial_{t}{\sf S}(\tau(t,s)) u_{0} = \partial_{t}\tau(t,s)A{\sf S}(\tau(t,s))u_{0}$.
To analyze the Lie-Trotter method, we define $\alpha:\mathbb{R} \to \mathbb{R}$ a periodic function of period $1$ as:
\begin{align}
\label{eq: alpha}
\alpha(t) =
\left\{
\begin{array}{cl}
2 & \text{, if } k \le t < k+1/2, \\
0 & \text{, if } k-1/2 \le t < k,
\end{array}
\right.
\end{align}
for $k\in \mathbb{Z}$. 
\label{def: alpha_h}
Given $h>0$, we define the function $\alpha_{h}:\mathbb{R} \to \mathbb{R}$ as $\alpha_{h}(t) = \alpha(t/h)$.
Clearly $0\le \alpha_{h} \le 2$, $\alpha_{h}$ is $h$-periodic and its mean value is $1$.
We consider $\tau_{h}:\mathbb{R}^2 \to \mathbb{R}$ given by
\begin{align*}
\tau_{h}(t,t') = \int_{t'}^{t}\alpha_{h}(t'')dt'',
\end{align*}
\begin{figure}[ht]
\centering
\includegraphics[scale=0.75]{./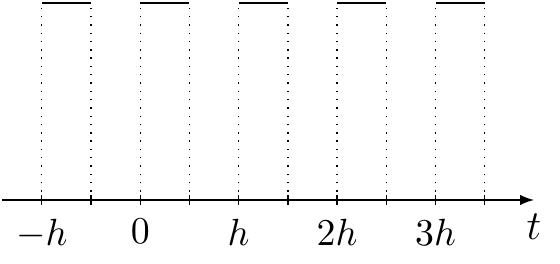}
\caption{\label{fig: alpha} Graph of $\alpha_{h}(t)$.}
\end{figure}
The following results show that ${\sf S}_{\alpha_{h}}$ defines a propagator in $X$.
We also obtain some estimates that we will use in the following section concerning the convergence.
We can prove that 
\begin{lemma}
The map $\tau_{h}$ is continuous in $\mathbb{R}^2$ and satisfies
\begin{enumerate}[i.]
\item $0\le \tau_{h}(t,t')\le 2(t-t')$, if $t'\le t$,
\item $\tau_{h}(t,t')+\tau_{h}(t',t'') = \tau_{h}(t,t'')$, if $t''< t' < t$,
\item $\tau_{h}(t+kh,t'+kh) = \tau_{h}(t,t')$, for $k\in \mathbb{Z}$,
\item $\tau_{h}(t'+kh,t') = kh$, for $k \in \mathbb{Z}$,
\item $\vert(t-t')-\tau_{h}(t,t')\vert \le h$,
\end{enumerate}
\end{lemma}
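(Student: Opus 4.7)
The plan is to derive all five properties directly from the definition $\tau_h(t,t') = \int_{t'}^t \alpha_h(s)\,ds$ together with the three structural facts recorded just above the lemma: $0 \le \alpha_h \le 2$, $\alpha_h$ is $h$-periodic, and its mean value equals $1$. Continuity of $\tau_h$ on $\mathbb{R}^2$ follows at once from the bound $0 \le \alpha_h \le 2$, since the map is then Lipschitz (with constant $2$) in each endpoint, and a standard $\varepsilon$-$\delta$ argument in two variables finishes the job.

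For (i) I would simply integrate the inequalities $0 \le \alpha_h \le 2$ over $[t',t]$. Claim (ii) is the additivity of the Riemann integral over adjacent intervals. For (iii), I would perform the change of variable $s = t''-kh$ and invoke the $h$-periodicity $\alpha_h(s+kh) = \alpha_h(s)$ to recover $\tau_h(t,t')$ exactly. For (iv), the key preliminary computation is that $\int_{0}^{h}\alpha_h(s)\,ds = h\int_0^1 \alpha(u)\,du = h$, because $\alpha$ is $1$-periodic with mean $1$ (indeed $\alpha$ equals $2$ on half of each period and $0$ on the other half). Combined with periodicity and additivity (ii), this gives $\int_{t'}^{t'+kh}\alpha_h = kh$ for every integer $k\ge 0$; the negative case follows by the sign convention $\int_a^b = -\int_b^a$.

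The content of (v) is the only step with any substance, and it is really where (iii) and (iv) have been set up to be used. Without loss of generality assume $t\ge t'$ and write $t - t' = kh + r$ with $k \in \mathbb{Z}_{\ge 0}$ and $0 \le r < h$. Using (ii) followed by (iv), I split
\begin{equation*}
\tau_h(t,t') = \tau_h(t'+kh+r,\,t'+kh) + \tau_h(t'+kh,\,t') = \tau_h(t'+kh+r,\,t'+kh) + kh,
\end{equation*}
and then by (iii) the first term equals $\tau_h(t'+r,t')$, which lies in $[0,2r]$ by (i). Hence
\begin{equation*}
(t-t') - \tau_h(t,t') = r - \tau_h(t'+r,t') \in [-r,\,r],
\end{equation*}
so its absolute value is at most $r < h$. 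The case $t < t'$ is identical after using $\tau_h(t,t') = -\tau_h(t',t)$.

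I do not foresee a genuine obstacle: each item is a short computation, and the only decision point is the bookkeeping in (v), where one must choose the decomposition $t - t' = kh + r$ and then apply (ii)--(iv) in the right order so that the residual $\tau_h(t'+r,t')$ can be estimated by (i). Keeping track of signs when $t < t'$ or $k < 0$ is the only thing requiring care.
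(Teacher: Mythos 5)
Your proof is correct and follows essentially the same route as the paper: items (i)--(iv) are derived directly from the bound $0\le\alpha_h\le 2$, additivity of the integral, periodicity, and the unit mean value, and for (v) your decomposition $t-t'=kh+r$ with the residual $\tau_h(t'+r,t')\in[0,2r]$ is equivalent to the paper's bound $\left\vert\int(1-\alpha_h)\right\vert\le sh$ over the fractional piece of the period. No gaps.
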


\begin{proof}
The first statement is a consequence of the inequality $0\le\alpha_{h}\le 2$.
The additivity is immediate from the definition. The third statement is a consequence of the $\alpha_{h}$ periodicity. 
As the mean value of $\alpha_{h}$ is $h$, then $\tau_{h}(t'+h,t') = h$, and using additivity property we have,
\begin{align*}
\tau_{h}(t'+kh,t') = \sum_{j=1}^{k}\tau_{h}(t'+jh,t'+(j-1)h) = kh.
\end{align*}
For the last claim, we consider $t = t' + kh + sh$, with $k \in \mathbb{Z}$ and $0\le s <1$, 
as $\vert 1-\alpha_{h}(t)\vert \le 1$, then
\begin{align*}
\vert(t-t')-\tau_{h}(t,t')\vert & = \vert kh + sh - \tau_h (t'+kh+sh,t')\vert \\ 
& = \vert (kh + sh)-\tau_{h}(t' + kh + sh,t'+kh)-\tau_{h}(t'+kh,t')\vert \\
& = \vert sh - \tau_{h}(t' + kh + sh,t'+kh) \vert \\
& = \left\vert \int_{t'+kh}^{t' + kh + sh} (1 - \alpha_{h}(t''))dt''\right\vert   \\
& \le \int_{t'+kh}^{t' + kh + sh} \vert 1-\alpha_{h}(t'')\vert dt'' \le h,
\end{align*}
that proves the last assertion. 
\end{proof}

We define $\Omega = \{ (t,t') \in \mathbb{R}^2 : 0 \le t' \le t \}$ and the application ${\sf S}_{h}:\Omega \to \mathcal{B}(X)$ defined by
${\sf S}_{h}(t,t') = {\sf S}(\tau_{h}(t,t'))$, from the previous lemma have:
\begin{corollary}
\label{le: Sh}
Let $\sf S:\mathbb{R}_{+}\to \mathcal{B}(X)$ a strongly continuous one-parameter semigroup of operators, we have that ${\sf S}_{h}$ satisfies:
\begin{enumerate}[i.]
\item ${\sf S}_{h}(t,t) = \id$.
\item ${\sf S}_{h}(t,t'') = {\sf S}_{h}(t,t'){\sf S}_{h}(t',t'')$, if $0 \le t''\le t' \le t$.
\item There exist constants $M \ge 1$ and $\omega\in\mathbb{R}$ such that 
$\Vert {\sf S}_{h}(t,t') \Vert_{\mathcal{B}(X)} \le M e^{2\omega (t-t')}$, for $(t,t') \in \Omega$.
\item If $u \in X$, The map $(t,t') \mapsto {\sf S}_{h}(t,t')u$ is continuous.
\item If $u \in D = {\rm Dom}(A)$ and $t'\le t\neq kh/2$ with $k\in \mathbb{Z}$, then the map $t\mapsto {\sf S}_{h}(t,t')u$ is differentiable and we have
\begin{align*}
\partial_t {\sf S}_{h}(t,t')u 
= \left\{
\begin{array}{cc}
2 A {\sf S}_{h}(t,t')u & \text{, if } kh < t < (k+1/2)h, \\
0 & \text{, if } (k-1/2)h < t < kh,
\end{array}
\right.
\end{align*}
\end{enumerate}
\end{corollary}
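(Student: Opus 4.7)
The plan is to reduce every item to the corresponding property of the strongly continuous semigroup $\sf S$ via the change of time $s=\tau_h(t,t')$, using the structural facts about $\tau_h$ proved in the preceding lemma. All five claims are direct translations once one observes that $\tau_h$ is continuous, additive, bounded by $2(t-t')$, and piecewise affine in $t$ with slope $\alpha_h(t)$ off the discrete set $\{kh/2 : k\in\mathbb{Z}\}$.

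Items (i) and (ii) are immediate: $\tau_h(t,t)=0$ gives ${\sf S}_h(t,t)={\sf S}(0)=\id$, while the additivity $\tau_h(t,t'')=\tau_h(t,t')+\tau_h(t',t'')$ combined with the semigroup law ${\sf S}(s+s')={\sf S}(s){\sf S}(s')$ yields (ii). For (iii), the standard exponential bound $\|{\sf S}(s)\|_{\mathcal{B}(X)}\le Me^{\omega s}$ for a $C_0$-semigroup, together with $0\le \tau_h(t,t')\le 2(t-t')$, produces the estimate (after replacing $\omega$ by $\max(\omega,0)$ if necessary, so that the exponential is monotone in its argument). For (iv), the continuity of $\tau_h$ combined with strong continuity of $s\mapsto {\sf S}(s)u$ makes the composition continuous in $(t,t')$.

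The substantive item is (v). Fix $t'$ and work on a subinterval of the form $(kh/2,(k+1)h/2)$, where $\alpha_h$ is constant, equal to $2$ or $0$ according to the parity of $k$. On such a subinterval, $t\mapsto \tau_h(t,t')$ is affine with derivative $\alpha_h(t)$. For $u\in D(A)$ the curve $s\mapsto {\sf S}(s)u$ is $C^1$ with derivative $A{\sf S}(s)u$, and the chain rule for Banach-valued compositions then gives
\begin{equation*}
\partial_{t}{\sf S}_{h}(t,t')u \;=\; \alpha_{h}(t)\, A\,{\sf S}(\tau_{h}(t,t'))u \;=\; \alpha_{h}(t)\, A\,{\sf S}_{h}(t,t')u.
\end{equation*}
Reading off the two possible values of $\alpha_h$ on $(kh,(k+1/2)h)$ and on $((k-1/2)h,kh)$ reproduces the two cases stated.

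I do not expect a genuine obstacle here: once the lemma on $\tau_h$ is in hand, the only nontrivial point is to justify the chain rule in (v) for a Banach-valued composition, and this is standard provided one restricts to intervals on which $\tau_h$ is affine and to data in $D(A)$, so that the $s$-derivative of ${\sf S}(s)u$ exists as an element of $X$. The exclusion of the points $t=kh/2$, where $\alpha_h$ jumps, matches exactly the hypothesis in (v), so no one-sided derivative gymnastics are required.
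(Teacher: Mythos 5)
Your proof is correct and follows exactly the route the paper intends: the corollary is stated there without proof, as an immediate consequence of the lemma on $\tau_{h}$ (additivity, the bound $0\le\tau_{h}(t,t')\le 2(t-t')$, continuity, and piecewise affinity with slope $\alpha_{h}$) combined with the standard properties of a $C_{0}$-semigroup. Your explicit treatment of (iii) via $\omega\mapsto\max(\omega,0)$ and of the chain rule in (v) on the intervals where $\alpha_{h}$ is constant fills in precisely the details the paper leaves implicit.
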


\begin{figure}[htb]
\centering
\includegraphics[scale=0.6]{./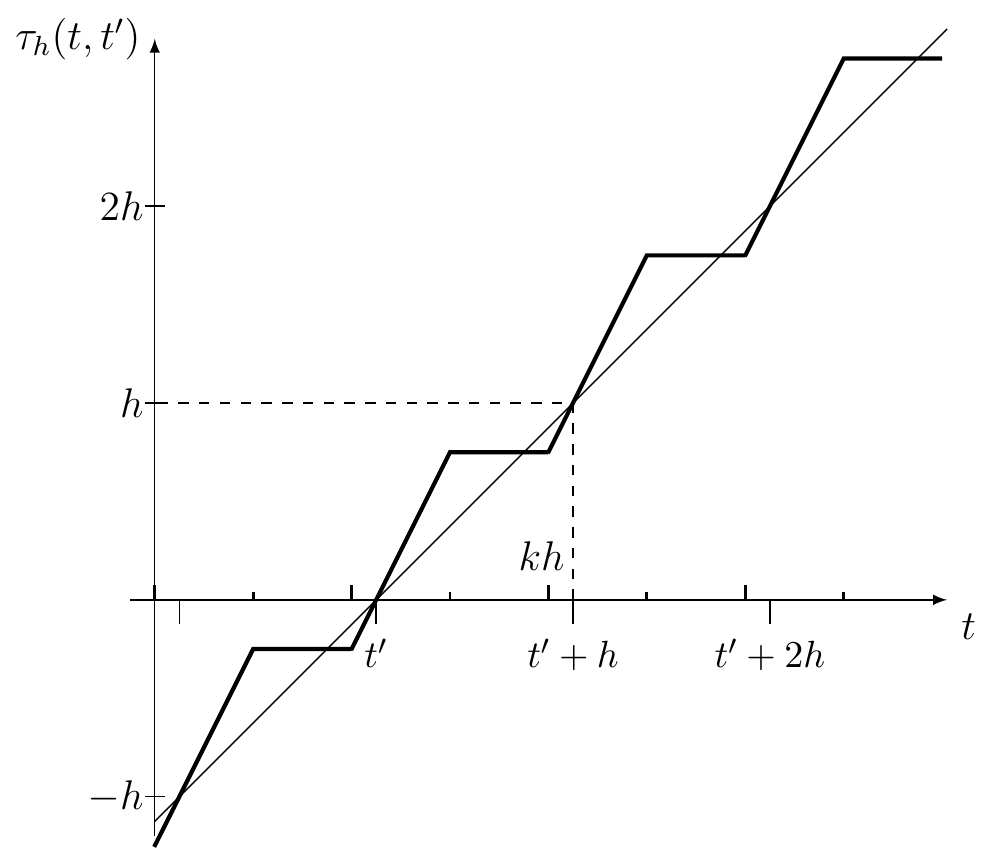}
\caption{\label{fig: tau} Graph of $\tau_{h}(t,t')$, the steps are in the half integers multiples of $h$.}
\end{figure}
\section{Approximate solutions} \label{section4}
In this section we develop the basic tools (Proposition \ref{pr: pasospropagador} and Theorem \ref{th: convergencia}) 
that allow us to obtain some properties of the solutions of the problem \eqref{eq: mild solution} 
from the approximations obtained with the Lie-Trotter method. Theorem \ref{th: convergencia} is an extension of theorem 3.9
 in \cite{DeLeo2015} to the non autonomous case.
We define the system
\begin{align}
\begin{cases}
\label{eq: reaction-diffusionsh}
\partial_t u_h - \alpha_h(t) A u_h = (2-\alpha_h(t)) F(t,u_h), \\
u_h = u_{h,0},
\end{cases}
\end{align}
with $\alpha_h(t)$ as in \eqref{def: alpha_h}, $u\in X$, $t>0$, $F:\mathbb{R}_{+} \times X\to X$ 
is a continuous function and $X$ is a Banach space.
Similarly with define the integral equation:
\begin{align}
\label{eq: u aprox}
u_{h}(t) & = {\sf S}_{h}(t,0)u_{h,0} + \int_{0}^{t} (2-\alpha_{h}(t')) {\sf S}_{h}(t,t') F(t',u_{h}(t')) dt'
\end{align}
\begin{proposition}
Let $u_{h,0} \in {\rm Dom(A)}$,
if $u_{h}$ is solution of the system \eqref{eq: reaction-diffusionsh} then $u_h$ is solution of \eqref{eq: u aprox} for $t \in [0,T]$.
\end{proposition}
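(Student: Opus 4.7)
The plan is a classical variation-of-parameters argument adapted to the propagator $\mathsf{S}_h$. Fix $t \in [0,T]$ and consider the auxiliary function $v(s) = \mathsf{S}_h(t,s)\, u_h(s)$ for $s \in [0,t]$. The idea is to differentiate $v$ in $s$, observe that the diffusive contributions cancel, and then integrate back from $0$ to $t$; the right-hand side of \eqref{eq: u aprox} is precisely what remains.

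First I would extract from Corollary \ref{le: Sh}(v) that on each open interval where $\alpha_h$ is constant, the map $s \mapsto \mathsf{S}_h(t,s)w$ is differentiable at $s$ with derivative $-\alpha_h(s) A\, \mathsf{S}_h(t,s)w$ whenever $w \in \mathrm{Dom}(A)$; the minus sign arises from $\partial_s \tau_h(t,s) = -\alpha_h(s)$. Since $u_h$ solves \eqref{eq: reaction-diffusionsh}, one has $A u_h(s) \in X$ and hence $u_h(s) \in \mathrm{Dom}(A)$ for every $s$. Applying the product rule and using that $A$ commutes with $\mathsf{S}_h(t,s)$ on $\mathrm{Dom}(A)$,
\begin{align*}
\tfrac{d}{ds} v(s) &= -\alpha_h(s) A\,\mathsf{S}_h(t,s) u_h(s) + \mathsf{S}_h(t,s)\bigl[\alpha_h(s) A u_h(s) + (2-\alpha_h(s)) F(s, u_h(s))\bigr] \\
&= (2-\alpha_h(s))\, \mathsf{S}_h(t,s)\, F(s, u_h(s)),
\end{align*}
after the first two terms cancel. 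Integrating from $0$ to $t$ and noting $v(t) = \mathsf{S}_h(t,t) u_h(t) = u_h(t)$ and $v(0) = \mathsf{S}_h(t,0) u_{h,0}$ yields \eqref{eq: u aprox}.

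The only technical nuisance, which I expect to be the main point requiring care, is the discontinuity of $\alpha_h$ at the half-integer multiples $s = kh/2$. On each open subinterval of $[0,t]$ free of such points, $v$ is continuously differentiable with the expression above. Across each discontinuity, $v$ nevertheless remains continuous: $\mathsf{S}_h(t,\cdot)$ is continuous by Corollary \ref{le: Sh}(iv), and $u_h$ is continuous by hypothesis. Since there are only finitely many such points in $[0,t]$, the fundamental theorem of calculus applies on each subinterval and the boundary contributions telescope, reconstructing the global identity. This is a routine partition-and-telescope argument rather than a substantive obstacle.
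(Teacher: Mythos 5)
Your argument is correct and is precisely the variation-of-parameters procedure the paper invokes by citing Lemma 4.1.1 of Cazenave--Haraux: differentiate $s\mapsto \mathsf{S}_h(t,s)u_h(s)$, cancel the diffusive terms using that $A$ commutes with the semigroup on its domain, and integrate piecewise across the finitely many discontinuities of $\alpha_h$. The paper's proof is only that one-line citation, so your write-up supplies the same argument in detail rather than a different route.
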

\begin{proof}
The procedure is similar to \cite{Cazenave1998}, Lemma 4.1.1.
\end{proof}

\begin{theorem}
\label{th: local existence sh}
There exists a function $T^{*}:X\to \mathbb{R}_{+}$ such that for 
$u_{h,0}\in X$, exists a unique $u_h\in C([0,T^{*}(u_{h,0})),X)$ solution of \eqref{eq: u aprox} with $u_h(0) = u_{h,0}$. 
More over, one of the following alternatives holds:
\begin{itemize}
\item[-] $T^{*}(u_{h,0}) = \infty$;
\item[-] $T^{*}(u_{h,0}) < \infty$ and $\lim_{t \uparrow T^{*}(u_{h,0})}\|u_h(t)\|_{X} = \infty$.
\end{itemize}
\end{theorem}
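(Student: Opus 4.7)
The proof follows the same template as Theorem 4.3.4 in Cazenave's book (cited for Theorem 2.1), adapted to the non-autonomous integral equation \eqref{eq: u aprox}. The plan is to set up a Banach fixed point argument in $C([0,\delta],X)$, extend to a maximal interval, and then prove the blow-up alternative by contradiction.

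First, I would fix $u_{h,0}\in X$ with $\|u_{h,0}\|_X \le R$ and work on the closed ball
\begin{equation*}
\mathcal{K}_{\delta,R} = \{ v \in C([0,\delta],X) : \|v(t)-u_{h,0}\|_X \le R \text{ for all } t\in[0,\delta]\},
\end{equation*}
which is a complete metric space under the sup norm. Define $\Phi:\mathcal{K}_{\delta,R}\to C([0,\delta],X)$ by
\begin{equation*}
(\Phi v)(t) = {\sf S}_{h}(t,0)u_{h,0} + \int_{0}^{t}(2-\alpha_{h}(t'))\,{\sf S}_{h}(t,t')\,F(t',v(t'))\,dt'.
\end{equation*}
Continuity of $t\mapsto (\Phi v)(t)$ follows from the joint continuity of ${\sf S}_h$ (Corollary \ref{le: Sh}.iv), dominated convergence, and the uniform bound $0\le 2-\alpha_h\le 2$.

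Next, using Corollary \ref{le: Sh}.iii to bound $\|{\sf S}_{h}(t,t')\|_{\mathcal{B}(X)}\le M e^{2\omega\delta}$ on $[0,\delta]$, together with the local Lipschitz property of $F$ on the ball of radius $2R$ around the origin (with Lipschitz constant $L=L(2R,\delta)$ and a bound $C=C(2R,\delta)$ for $\|F(t,z)\|_X$), a short computation shows
\begin{equation*}
\|(\Phi v)(t)-u_{h,0}\|_X \le \|{\sf S}_{h}(t,0)u_{h,0}-u_{h,0}\|_X + 2\delta M e^{2\omega\delta} C,
\end{equation*}
\begin{equation*}
\|(\Phi v)(t)-(\Phi w)(t)\|_X \le 2\delta M e^{2\omega\delta} L\, \|v-w\|_{C([0,\delta],X)}.
\end{equation*}
By the strong continuity of $t\mapsto {\sf S}_{h}(t,0)u_{h,0}$ at $t=0$, choosing $\delta = \delta(R)$ sufficiently small makes both the self-mapping and contraction inequalities hold. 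The Banach fixed point theorem then supplies a unique $u_h\in\mathcal{K}_{\delta,R}$ with $\Phi u_h = u_h$. Uniqueness on an arbitrary subinterval $[0,T]\subseteq [0,\delta]$ follows by the same Lipschitz estimate combined with Gronwall's inequality.

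Now extend $u_h$ to a maximal interval of existence $[0,T^{*}(u_{h,0}))$ by concatenation: given a solution on $[0,\tau]$, apply the local existence result with initial datum $u_h(\tau)$ and use the propagator identity ${\sf S}_h(t,0) = {\sf S}_h(t,\tau){\sf S}_h(\tau,0)$ (Corollary \ref{le: Sh}.ii) to glue solutions. Finally, for the blow-up alternative, suppose $T^{*}(u_{h,0})<\infty$ and $\limsup_{t\uparrow T^{*}}\|u_h(t)\|_X < \infty$. Then there is $R_0$ and a sequence $t_n\uparrow T^{*}$ with $\|u_h(t_n)\|_X\le R_0$; starting from $u_h(t_n)$ the local existence step yields a solution on $[t_n, t_n+\delta(R_0)]$, and for $n$ large enough $t_n + \delta(R_0) > T^{*}$, contradicting maximality. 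The main technical point to watch is that $\delta(R)$ depends only on $R$ (and on $M,\omega,L,C$ on the relevant bounded time window), which is precisely what the uniform-in-$t'$ bounds on ${\sf S}_h$ and on the multiplier $(2-\alpha_h)$ provide; this is where the non-autonomous factor $\alpha_h$ could have caused trouble but does not, because it is uniformly bounded on $\mathbb{R}$.
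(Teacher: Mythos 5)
Your proposal is correct and follows essentially the same route as the paper, whose entire proof is the one-line remark that the argument is similar to Theorem 4.3.4 of Cazenave--Haraux: your contraction-mapping, concatenation, and blow-up steps are precisely that template adapted to the propagator ${\sf S}_h$, with the needed uniform bounds supplied by Corollary \ref{le: Sh} and the bound $0\le 2-\alpha_h\le 2$. One cosmetic point: to obtain the full limit $\lim_{t\uparrow T^{*}}\|u_h(t)\|_{X}=\infty$ you should negate it as $\liminf_{t\uparrow T^{*}}\|u_h(t)\|_{X}<\infty$ rather than $\limsup<\infty$, but the sequence argument you give handles that case verbatim.
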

\begin{proof}
The proof is similar to Theorem 4.3.4 in \cite{Cazenave1998}.
\end{proof}

In the following proposition, we show that the solution of the integral problem \eqref{eq: u aprox} corresponds to the approximations obtained with the Lie-Trotter method.
\begin{proposition}
\label{pr: pasospropagador}
Let $u_{h}$ the solution of \eqref{eq: u aprox}, if $U_{h,k} = u_{h}(kh)$ y $V_{h,k} = u_{h}(kh-h/2)$,
then
\begin{subequations}
\begin{align}
\label{eq: metodo LT lineal}
V_{h,k+1} & = {\sf S}(h)U_{h,k}, \\
\label{eq: metodo LT no lineal}
U_{h,k+1} & = {\sf N}(kh+h,kh+h/2,V_{h,k+1}),
\end{align}
\end{subequations}
where ${\sf N}$ is the flux associated to $2F$, that is $w(t) = {\sf N}(t,t_{0},w_{0})$ where $w$ is the solution of
\begin{align}
\label{eq: integral equation2}
\begin{cases}
\dot{w} = 2F(t,w(t)), \\
w(t_{0}) = w_{0}.
\end{cases}
\end{align}
\end{proposition}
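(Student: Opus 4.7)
The plan is to exploit the piecewise structure of $\alpha_h$: on each half-interval $[kh, kh+h/2]$ we have $\alpha_h \equiv 2$ (linear part on, nonlinear part off, since the weight of $F$ is $2-\alpha_h$), whereas on $[kh+h/2, (k+1)h]$ we have $\alpha_h \equiv 0$ (linear part off, nonlinear part on with weight $2$). So the two identities should fall out as soon as we rewrite \eqref{eq: u aprox} starting from the appropriate midpoint.

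The first step is to establish a \emph{restart identity}: for any $0 \le t_0 \le t$ in the existence interval,
\begin{align*}
u_h(t) = {\sf S}_h(t,t_0)u_h(t_0) + \int_{t_0}^{t}(2-\alpha_h(t')){\sf S}_h(t,t')F(t',u_h(t'))\,dt'.
\end{align*}
This is purely formal manipulation of \eqref{eq: u aprox}, using the cocycle identity ${\sf S}_h(t,t') = {\sf S}_h(t,t_0){\sf S}_h(t_0,t')$ from Corollary \ref{le: Sh} (which itself follows from additivity of $\tau_h$), together with the definition of ${\sf S}_h(t_0,0)u_{h,0}$ coming from the original integral equation evaluated at $t_0$.

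Second, I would specialize this with $t_0 = kh$ and $t = kh + h/2$. On $[kh, kh+h/2]$ the integrand vanishes because $(2-\alpha_h(t')) \equiv 0$, so the integral term is zero; meanwhile $\tau_h(kh+h/2, kh) = \int_{kh}^{kh+h/2} 2\,dt'' = h$, so ${\sf S}_h(kh+h/2, kh) = {\sf S}(h)$. This yields $V_{h,k+1} = {\sf S}(h)U_{h,k}$, proving \eqref{eq: metodo LT lineal}. For \eqref{eq: metodo LT no lineal}, I would apply the restart identity with $t_0 = kh + h/2$ and $t \in [kh+h/2, (k+1)h]$. Here $\alpha_h \equiv 0$, so $\tau_h(t,t') = 0$ for all $t' \in [kh+h/2, t]$, hence ${\sf S}_h(t,t') = {\sf S}(0) = \id$, and the equation collapses to
\begin{align*}
u_h(t) = V_{h,k+1} + \int_{kh+h/2}^{t} 2F(t',u_h(t'))\,dt',
\end{align*}
which is exactly \eqref{eq: integral equation2} with initial datum $V_{h,k+1}$ at time $kh+h/2$. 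By uniqueness of the flow ${\sf N}$ of $2F$, evaluating at $t = (k+1)h$ gives $U_{h,k+1} = {\sf N}(kh+h, kh+h/2, V_{h,k+1})$.

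The only delicate step is the restart identity, since ${\sf S}_h$ is a two-parameter propagator rather than a true semigroup; but everything reduces to the additivity $\tau_h(t,t')+\tau_h(t',t'')=\tau_h(t,t'')$ already recorded in the previous lemma, so no genuine obstacle remains. The rest is a direct consequence of $\alpha_h$ being constant on each half-interval.
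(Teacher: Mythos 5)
Your proposal is correct and follows essentially the same route as the paper: restart the integral equation \eqref{eq: u aprox} at $t_1=kh$ (resp.\ $kh+h/2$), use that $2-\alpha_h\equiv 0$ on $[kh,kh+h/2)$ with $\tau_h(kh+h/2,kh)=h$ for the linear step, and that $\alpha_h\equiv 0$ on $[kh+h/2,(k+1)h)$ with $\tau_h=0$ for the nonlinear step. The only difference is presentational: you explicitly justify the restart identity via the cocycle property of ${\sf S}_h$, which the paper simply asserts.
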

\begin{proof}
For $t_{1}\in (0,t)$ it verifies
\begin{align*}
u_{h}(t) 
& = {\sf S}_{h}(t,t_{1}) u_{h}(t_{1}) + \int_{t_{1}}^{t} (2-\alpha_{h}(t')) {\sf S}_{h}(t,t') F(t',u_{h}(t')) dt'
\end{align*}
using that $t_{1} = kh$ y $t = kh+h/2$, we have
\begin{align*}
V_{h,k+1} & = {\sf S}_{h}(kh+h/2,kh) U_{h,k} 
+ \int_{kh}^{kh+h/2} (2-\alpha_{h}(t')) {\sf S}_{h}(kh+h/2,t') F(t',u_{h}(t')) dt',
\end{align*}
given that $\alpha_{h}(t)=2$ for $t\in [kh,kh+h/2)$, we have $\tau_{h}(kh+h/2,kh) = h$ and therefore
\eqref{eq: metodo LT lineal}.
Similarly, $\alpha_{h}(t)=0$ for $t\in [kh+h/2,kh+h)$, then $\tau_{h}(t,kh+h/2)=0$
and therefore
\begin{align*}
u_{h}(t) = V_{h,k+1} + 2 \int_{kh+h/2}^{t}F(t',u_{h}(t')) dt',
\end{align*}
evaluating in $t=kh+h$, we obtain \eqref{eq: metodo LT no lineal}.
\end{proof}
\begin{theorem}
\label{th: convergencia}
Let $u\in C([0,T^{*}),X)$ the solution of the integral problem \eqref{eq: mild solution}
\begin{align*}
u(t) = {\sf S}(t)u_0 + \int_{0}^{t}{\sf S}(t-t')F(t',u(t')) dt',
\end{align*}
$T\in(0,T^{*})$ y $\varepsilon>0$. There exists $h^{*}>0$ such that if $0<h<h^{*}$, then $u_{h}$ the solution of
\eqref{eq: u aprox} with $u_{h,0} = u_{0}$, is defined in $[0,T]$ and verifies
$\Vert u(t)-u_{h}(t)\Vert_{X} \le \varepsilon$ for $t\in [0,T]$.
\end{theorem}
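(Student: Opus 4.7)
The proof is a standard Lie--Trotter consistency and stability argument carried out in the non-autonomous Banach-space setting. My plan is to fix $T < T^{*}$, set $R = 1 + \sup_{t \in [0,T]} \|u(t)\|_{X}$ (finite by continuity of $u$), and let $L, M_{F}$ denote a Lipschitz constant and sup-norm bound for $F$ on $[0,T] \times \overline{B}_{R+1}(0)$. From Corollary \ref{le: Sh} I also fix $K \ge 1$ so that $\|{\sf S}(\tau)\|_{\mathcal{B}(X)}$ and $\|{\sf S}_{h}(t,t')\|_{\mathcal{B}(X)}$ are uniformly bounded by $K$ for $\tau \in [0,2T]$ and $(t,t') \in \Omega \cap [0,T]^{2}$. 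Note $u_{h,0}=u_{0}$ lies in $B_{R}(0)$, so $u_{h}$ starts well inside the ball.

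The heart of the proof is a \emph{consistency estimate}: for every continuous $f:[0,T]\to X$,
\begin{equation*}
\eta(h,f) := \sup_{t \in [0,T]}\left\| \int_{0}^{t}\!{\sf S}(t-t') f(t')\,dt' - \int_{0}^{t}\!(2-\alpha_{h}(t')){\sf S}_{h}(t,t') f(t')\,dt' \right\|_{X} \xrightarrow[h\to 0^{+}]{} 0.
\end{equation*}
I would prove this by splitting $[0,t]$ into the half-periods $[kh, kh+h/2]$ and $[kh+h/2, (k+1)h]$: on the first $2-\alpha_{h} \equiv 0$, while on the second $\alpha_{h} \equiv 0$ forces $\tau_{h}(t,t')$ to be constant in $t'$, so the right-hand integral reduces to a weighted sum $h \sum_{k} {\sf S}(\tau_{h}(t,(k+1)h))\, \bar f_{k}$ of midpoint-type values. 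The left-hand integral admits a matching Riemann sum, and the two converge to the same limit thanks to the key estimate $|(t-t')-\tau_{h}(t,t')|\le h$ from the lemma, the strong continuity of ${\sf S}$, and the uniform continuity of $f$ on $[0,T]$. An easier version of the same argument gives $\|{\sf S}(t)u_{0} - {\sf S}_{h}(t,0)u_{0}\|_{X} \to 0$ uniformly in $t \in [0,T]$.

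Next, I run a continuation argument for $u_{h}$. Let $T_{h} = \sup\{\sigma \in [0, \min(T,T^{*}(u_{0}))) : \|u_{h}(t)\|_{X} \le R+1 \text{ for all } t \in [0,\sigma]\}$, which is positive by Theorem \ref{th: local existence sh} applied to $u_{0}$. On $[0,T_{h}]$ both $u$ and $u_{h}$ take values in $\overline{B}_{R+1}(0)$; subtracting the two integral equations and using the consistency estimate on $f_{0}(t') = F(t', u(t'))$ together with the Lipschitz bound for $F$ gives
\begin{equation*}
\|u(t) - u_{h}(t)\|_{X} \le \varepsilon(h) + 2 K L \int_{0}^{t} \|u(t') - u_{h}(t')\|_{X}\,dt',
\end{equation*}
where $\varepsilon(h) = \eta(h,f_{0}) + \sup_{t\in[0,T]}\|{\sf S}(t)u_{0}-{\sf S}_{h}(t,0)u_{0}\|_{X} \to 0$. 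Gronwall then yields $\|u(t) - u_{h}(t)\|_{X} \le \varepsilon(h) e^{2KLT}$ on $[0,T_{h}]$. Choosing $h^{*}$ so that this upper bound is below $\min(\varepsilon, \tfrac{1}{2})$, we get $\|u_{h}(t)\|_{X} \le R - \tfrac{1}{2}$ throughout $[0,T_{h}]$, which by continuity and the blow-up alternative of Theorem \ref{th: local existence sh} forbids $T_{h} < T$. Hence $T_{h} = T$ and the claimed bound holds.

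The main obstacle is the consistency estimate: ${\sf S}_{h}$ is a non-autonomous propagator whose elapsed time $\tau_{h}(t,t')$ jumps on each half-period, and the weight $2-\alpha_{h}$ is itself discontinuous. A naive operator-norm bound would fail because ${\sf S}(\tau_{h}(t,t')) - {\sf S}(t-t')$ in general only converges to zero strongly; the argument must be pushed inside the integral, exploiting the precise alternating structure of $\alpha_{h}$ (so that $\tau_{h}$ is locally constant on the intervals that actually contribute) together with the uniform continuity supplied by compactness of $[0,T]$.
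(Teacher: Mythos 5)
Your proposal is correct and shares the paper's overall architecture: decompose $u-u_{h}$ into a consistency error plus a Lipschitz feedback term, show the consistency error vanishes as $h\to 0^{+}$ uniformly on $[0,T]$, then close with Gronwall and a continuation argument on the set where $u_{h}$ stays in a fixed ball (your $T_{h}$ plays exactly the role of the paper's $J_{\varepsilon}$). Where you genuinely diverge is in the proof of the consistency estimate. The paper splits it into two separate cancellations, handled by Lemma \ref{le: SShf} and Lemma \ref{le: 1-af}: first it replaces ${\sf S}_{h}(t,t')$ by ${\sf S}(t-t')$ by approximating $f$ with a $\mathrm{Dom}(A)$-valued $g$ and writing ${\sf S}(\tau)g-g=\int_{0}^{\tau}{\sf S}(\xi)Ag\,d\xi$, which turns the bound $\vert(t-t')-\tau_{h}(t,t')\vert\le h$ into the quantitative estimate $h\max\Vert Ag\Vert_{X}$; second, it removes the oscillating weight using only that $\alpha_{h}-1$ has zero mean on each period. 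You instead merge both effects into a single Riemann-sum computation over the half-periods, relying on uniform continuity of $(\tau,x)\mapsto{\sf S}(\tau)x$ on $[0,2T]\times f([0,T])$. That works, but the uniformity over the sample values $f(c_{k})$ (whose number grows as $h\to 0$) is precisely the point that strong continuity alone does not give; you must invoke compactness of the range $f([0,T])$ explicitly, which is the job the paper's density argument performs. Your route is more self-contained and avoids the generator entirely, at the cost of being purely qualitative, whereas the paper's $\mathrm{Dom}(A)$ argument yields an $O(h)$ rate for data in the domain, which is the natural starting point for the convergence-order analysis in the cited splitting literature.
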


To prove the theorem, we need two previous lemmas. 
We follow the procedure of Theorem 3.9 in \cite{DeLeo2015} (see also \cite{Borgna2015}).
\begin{lemma}
\label{le: SShf}
Let $f\in C([0,T],X)$, if
\begin{align*}
I_{h}(t,t') = ({\sf S}(t-t') - {\sf S}_{h}(t,t')) f(t'),
\end{align*}
then $\lim\limits_{h\to 0^{+}} \sup\limits_{(t,t')\in\Omega_{T}} \norm{I_{h}(t,t')}_{X} = 0$, 
where $\Omega_{T} = \{ (t,t') \in \mathbb{R}^2 : 0 \le t' \le t \le T\}$.
\end{lemma}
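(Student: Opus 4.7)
The key input is the last property from the preceding lemma: $|(t-t') - \tau_h(t,t')| \le h$. Since ${\sf S}_h(t,t') = {\sf S}(\tau_h(t,t'))$, we can rewrite
\[
I_h(t,t') = \bigl({\sf S}(t-t') - {\sf S}(\tau_h(t,t'))\bigr) f(t'),
\]
so the quantities inside $\sf S$ differ by at most $h$, while $f(t')$ stays in a compact set. My plan is to exploit this by proving that $(s,x)\mapsto {\sf S}(s)x$ is uniformly continuous on the compact set $[0,2T]\times K$, where $K = f([0,T])$.

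The first step is to observe that $K$ is compact in $X$ (continuous image of a compact interval), and that $\tau_h(t,t')\in[0,2T]$ whenever $(t,t')\in\Omega_T$, by item i.\ of the preceding lemma. Next, I claim that the map $\Phi:(s,x)\mapsto {\sf S}(s)x$ is jointly continuous on $[0,2T]\times X$. For this, I use the standard estimate
\[
\norm{{\sf S}(s)x - {\sf S}(s_0)x_0}_X \le \norm{{\sf S}(s)}_{\mathcal B(X)}\norm{x-x_0}_X + \norm{{\sf S}(s)x_0 - {\sf S}(s_0)x_0}_X,
\]
where the operator norms are uniformly bounded by $Me^{2\omega T}$ on $[0,2T]$ and the second summand is small by strong continuity. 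Restricting $\Phi$ to the compact product $[0,2T]\times K$ then yields uniform continuity.

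Given $\varepsilon>0$, uniform continuity furnishes $\delta>0$ such that $|s-s'|<\delta$ and $x\in K$ imply $\norm{{\sf S}(s)x - {\sf S}(s')x}_X < \varepsilon$. Choosing $h^*<\delta$ and taking any $h\in(0,h^*)$, for every $(t,t')\in\Omega_T$ we have $|(t-t')-\tau_h(t,t')|\le h<\delta$ and $f(t')\in K$, hence $\norm{I_h(t,t')}_X<\varepsilon$. This is uniform in $(t,t')$, giving the desired limit.

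I expect the only delicate step to be verifying the joint continuity of $(s,x)\mapsto {\sf S}(s)x$ on $[0,2T]\times X$; once that is in hand, compactness of $K$ and the quantitative estimate $|(t-t')-\tau_h(t,t')|\le h$ deliver the conclusion immediately. An alternative, essentially equivalent, route is to approximate $f$ uniformly by a piecewise constant function $f_n$ taking finitely many values $x_1,\dots,x_N\in X$, apply strong continuity of $\sf S$ separately at each $x_i$ to get uniform smallness of $\norm{({\sf S}(s)-{\sf S}(s'))x_i}_X$ for $|s-s'|\le h$, and absorb the approximation error using the uniform bound $\sup_{s\in[0,2T]}\norm{{\sf S}(s)}_{\mathcal B(X)} \le Me^{2\omega T}$.
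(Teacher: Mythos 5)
Your argument is correct, but it takes a genuinely different route from the paper's. You exploit compactness of $K=f([0,T])$ and the joint continuity of $(s,x)\mapsto{\sf S}(s)x$ (a standard consequence of strong continuity plus local boundedness of $\norm{{\sf S}(s)}_{\mathcal{B}(X)}$), upgrade it to uniform continuity on the compact set $[0,2T]\times K$, and then feed in the estimate $\vert(t-t')-\tau_{h}(t,t')\vert\le h$; note that you do need, and correctly have, $\tau_{h}(t,t')\le 2(t-t')\le 2T$ so that both time arguments stay in $[0,2T]$. The paper instead approximates $f$ uniformly by $g\in C([0,T],X)$ with values in ${\rm Dom}(A)$ and $Ag$ continuous, writes ${\sf S}(s)g(t')=g(t')+\int_{0}^{s}{\sf S}(\xi)Ag(t')\,d\xi$ for both $s=t-t'$ and $s=\tau_{h}(t,t')$, bounds the difference by $h\max_{t}\norm{Ag(t)}_{X}$, and absorbs $f-g$ with the uniform operator bound $2Me^{2\omega T}\varepsilon$. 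The trade-off: your proof is softer and arguably more elementary --- it never touches the generator or the density of its domain, and your closing piecewise-constant variant is just a discretization of the same idea --- whereas the paper's argument is quantitative, exhibiting an explicit $O(h)$ rate on the dense subspace of smooth data, which is the form in which such Lie--Trotter estimates are typically needed. Both are complete proofs of the stated qualitative limit.
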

\begin{proof}
Given $\varepsilon>0$, there exists $g\in C([0,T],X)$ such that $g(t)\in D$ for $t\in [0,T]$, 
$Ag\in C([0,T],X)$ and $\max\limits_{t\in [0,T]}\norm{f(t) - g(t)}_{X} < \varepsilon$.
\begin{align}
\begin{split}
\label{eq: Sf-g}
\norm{\pa{{\sf S}\pa{t-t'} - {\sf S}_{h}\pa{t,t'}} \pa{f\pa{t'}-g\pa{t'}}}_{X}
& \le 2M e^{2\omega T}\max\limits_{t\in [0,T]}\norm{f\pa{t} - g\pa{t}}_{X} \\
& \le 2M e^{2\omega T} \varepsilon.
\end{split}
\end{align}
On the other hand, we can write
\begin{align*}
{\sf S}\pa{t-t'} g\pa{t'} & = g\pa{t'} + \int_{0}^{t-t'} {\sf S}\pa{\xi} Ag\pa{t'} d\xi, \\
{\sf S}_{h}\pa{t,t'} g\pa{t'} & = g\pa{t'} + \int_{0}^{\tau_{h}\pa{t,t'}} {\sf S}\pa{\xi} Ag\pa{t'} d\xi,
\end{align*}
subtracting both equations we obtain
\begin{align*}
\pa{{\sf S}\pa{t-t'}-{\sf S}_{h}\pa{t,t'}} g\pa{t'} & = \pm\int_{J\pa{t,t'}} {\sf S}\pa{\xi} Ag\pa{t'} d\xi,
\end{align*}
where $J\pa{t,t'}$ is the interval
$J\pa{t,t'}=\left[\min\set{\pa{t-t'},\tau_{h}\pa{t,t'}},\max\set{\pa{t-t'},\tau_{h}\pa{t,t'}}\right]$, 
then
\begin{align}
\begin{split}
\label{eq: Sg}
\norm{\pa{{\sf S}\pa{t-t'}-{\sf S}_{h}\pa{t,t'}} g\pa{t'}}_{X} & \le 
\abs{\pa{t-t'}-\tau_{h}\pa{t,t'}} \max_{t\in [0,T]}\norm{Ag\pa{t}}_{X} \\
& \le h \max_{t\in [0,T]}\norm{Ag\pa{t}}_{X}.
\end{split}
\end{align}
From the equations \eqref{eq: Sf-g} y \eqref{eq: Sg} we obtain the result.
\end{proof}

\begin{lemma}
\label{le: 1-af}
Let 
$f\in C\pa{\Omega_{T},X}$, with $\Omega_{T}$ as in the previous lemma, if
\begin{align*}
I_{h}\pa{t} = \int_{0}^{t} \pa{\alpha_{h}\pa{t'} - 1} f\pa{t,t'} dt',
\end{align*}
then $\lim\limits_{h\to 0^{+}} \sup\limits_{t \in [0,T]} \norm{I_{h}\pa{t}}_{X} = 0$.
\end{lemma}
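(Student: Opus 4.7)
My plan is to exploit the fact that $\alpha_h - 1$ has mean zero on every interval of length $h$, combined with uniform continuity of $f$ on the compact set $\Omega_T$.

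First I would note that since $\Omega_T$ is compact and $f$ is continuous there, $f$ is uniformly continuous and bounded; set $M_f = \sup_{(t,t') \in \Omega_T} \|f(t,t')\|_X$. Given $\varepsilon > 0$, pick $\delta > 0$ so that $\|f(t,t'_1) - f(t,t'_2)\|_X < \varepsilon$ whenever $(t,t'_1), (t,t'_2) \in \Omega_T$ with $|t'_1 - t'_2| < \delta$. Assume from now on that $h < \delta$.

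Next, for a fixed $t \in [0,T]$, partition $[0,t]$ into the full periods $[kh,(k+1)h]$ for $k = 0, 1, \dots, N-1$, where $N = \lfloor t/h \rfloor$, plus the remainder $[Nh, t]$. The key identity is that, because $\alpha_h$ is $h$-periodic with mean $1$, we have $\int_{kh}^{(k+1)h}(\alpha_h(t')-1)\, dt' = 0$, so we may subtract the constant value $f(t,kh)$:
\begin{align*}
\int_{kh}^{(k+1)h}(\alpha_h(t')-1) f(t,t')\, dt' = \int_{kh}^{(k+1)h}(\alpha_h(t')-1)\bigl(f(t,t') - f(t,kh)\bigr)\, dt'.
\end{align*}
Using $|\alpha_h - 1| \le 1$ and the uniform continuity estimate, each such piece has norm at most $h\varepsilon$. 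Summing over $k = 0, \ldots, N-1$ yields a contribution bounded by $Nh\varepsilon \le T\varepsilon$.

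For the leftover piece on $[Nh, t]$, I would use the crude bound $\|(\alpha_h(t')-1) f(t,t')\|_X \le M_f$ and the fact that the interval has length at most $h$, giving a contribution bounded by $h M_f$. Combining,
\begin{align*}
\sup_{t \in [0,T]} \|I_h(t)\|_X \le T\varepsilon + h M_f,
\end{align*}
and letting $h \to 0^+$ and then $\varepsilon \to 0$ yields the claim. The only delicate point is ensuring the uniform-continuity modulus is independent of $t$, which is precisely why working on the compact set $\Omega_T$ is essential; once that is set up, the argument is a direct telescoping estimate driven by the vanishing mean of $\alpha_h - 1$.
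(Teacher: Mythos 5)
Your proof is correct and follows essentially the same route as the paper's: uniform continuity on the compact set $\Omega_{T}$, the mean-zero property of $\alpha_{h}-1$ on full periods to subtract a constant value of $f$ on each subinterval, and a crude $M_{f}h$ bound on the leftover partial period. The only cosmetic difference is that you subtract the left-endpoint value $f(t,kh)$ where the paper uses the right endpoint $f(t,jh)$, which changes nothing.
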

\begin{proof}
From the uniform continuity $f$, we can see that exists $\delta>0$ such that if $0 \le t',t''\le t \le T$ and $\abs{t'-t''}<\delta$, 
then $\norm{f\pa{t,t'}-f\pa{t,t''}}_{X} < \varepsilon$.
Let $k = \lfloor t/h \rfloor$, we can write
\begin{align*}
I_{h}\pa{t} = \sum_{j=1}^{k}\int_{\pa{j-1}h}^{jh} \pa{\alpha_{h}\pa{t'} - 1} f\pa{t,t'} dt'
+ \int_{kh}^{t} \pa{\alpha_{h}\pa{t'} - 1} f\pa{t,t'} dt'.
\end{align*}
As the mean value of $\alpha_{h}$ is $1$ in intervals of length $h$, for $f_{j}\in X$ we have
\begin{align}
\label{eq: fj=0}
0 = \int_{\pa{j-1}h}^{jh} \pa{\alpha_{h}\pa{t'} - 1} f_{j} \, dt',
\end{align}
therefore
\begin{align*}
\int_{\pa{j-1}h}^{jh} \pa{\alpha_{h}\pa{t'} - 1} f\pa{t,t'} dt'
= \int_{\pa{j-1}h}^{jh} \pa{\alpha_{h}\pa{t'} - 1} \pa{f\pa{t,t'}-f_{j}} dt'.
\end{align*}
If $h<\delta$ and $f_{j}=f\pa{t,jh}$, then $\norm{f\pa{t,t'}-f_{j}}_{X} < \varepsilon$ for $t\in [\pa{j-1}h,jh]$ and therefore
\begin{align}
\label{eq: f-fj}
\norm{\int_{\pa{j-1}h}^{jh} \pa{\alpha_{h}\pa{t'} - 1} \pa{f\pa{t,t'}-f_{j}} dt'}_{X} \le \varepsilon h.
\end{align}
If $M = \max\limits_{\pa{t,t'}\in\Omega_{T}}\norm{f\pa{t,t'}}_{X}$, then we have
\begin{align}
\label{eq: kht}
\norm{\int_{kh}^{t} \pa{\alpha_{h}\pa{t'} - 1} f\pa{t,t'} dt'}_{X}
\le \int_{kh}^{t}\norm{f\pa{t,t'}}_{X} dt' \le M h.
\end{align}
From \eqref{eq: f-fj}, \eqref{eq: fj=0} y \eqref{eq: kht}, we can obtain
\begin{align*}
\norm{I_{h}\pa{t}}_{X} \le \sum_{j=1}^{k} \varepsilon h + M h \le T \varepsilon + M h,
\end{align*}
from where we get the result.
\end{proof}

\begin{proof}[Proof of Theorem \ref{th: convergencia}]
If $[0,T_{h}^{*})$ is the interval of existence of the integral equation \eqref{eq: u aprox}, 
for $0\le t < \min\set{T,T_{h}^{*}}$ the subtraction $u\pa{t}-u_{h}\pa{t}$ satisfies
\begin{align*}
u\pa{t}-u_{h}\pa{t} = \pa{{\sf S}\pa{t} - {\sf S}_{h}\pa{t,0}} u_0 & + \int_{0}^{t}{\sf S}\pa{t-t'}F\pa{t',u\pa{t'}} dt' \\
& -\int_{0}^{t} \pa{2-\alpha_{h}\pa{t'}} {\sf S}_{h}\pa{t,t'} F\pa{t',u_{h}\pa{t'}} dt'.
\end{align*}
If we define
\begin{align*}
I_{2,h}\pa{t} & = \int_{0}^{t} \pa{2-\alpha_{h}\pa{t'}} \pa{{\sf S}\pa{t-t'} - {\sf S}_{h}\pa{t,t'}}F\pa{t',u\pa{t'}} dt', \\
I_{3,h}\pa{t} & = \int_{0}^{t} \pa{\alpha_{h}\pa{t'} - 1} {\sf S}\pa{t-t'} F\pa{t',u\pa{t'}} dt', \\
\end{align*}
then
\begin{align}
\begin{split}
\label{eq: u-uh}
u\pa{t}-u_{h}\pa{t} & = I_{1,h}\pa{t} + I_{2,h}\pa{t} + I_{3,h}\pa{t} \\
& + \int_{0}^{t} \pa{2-\alpha_{h}\pa{t'}} {\sf S}_{h}\pa{t,t'} \pa{F\pa{t',u\pa{t'}} - F\pa{t',u_{h}\pa{t'}}} dt'.
\end{split}
\end{align}
Using the Lemma \ref{le: SShf}, using $f\pa{t}=u_0$, 
we can see that $\lim\limits_{h\to 0}\sup\limits_{t\in [0,T]}\norm{I_{1,h}\pa{t}}_{X}=0$.
Given that,
\begin{align*}
\norm{I_{2,h}\pa{t}}_{X} & \le 2 \int_{0}^{t} \norm{\pa{{\sf S}\pa{t-t'} - {\sf S}_{h}\pa{t,t'}}F\pa{t',u\pa{t'}}}_{X} dt'\\
& \le 2 T \sup\limits_{\pa{t,t'}\in\Omega_{T}}\norm{\pa{{\sf S}\pa{t-t'} - {\sf S}_{h}\pa{t,t'}}F\pa{t',u\pa{t'}}}_{X},
\end{align*}

using once again the Lemma \ref{le: SShf} for $f\pa{t} = F\pa{t,u\pa{t}}$, 
we obtain $\lim\limits_{h\to 0}\sup\limits_{t\in [0,T]}\norm{I_{2,h}\pa{t}}_{X}=0$.
The map $f\pa{t,t'} = {\sf S}\pa{t-t'} F\pa{t',u\pa{t'}}$ is continuous in $\Omega_{T}$, therefore
from Lemma \ref{le: SShf}, we can deduce $\lim\limits_{h\to 0}\sup\limits_{t\in [0,T]}\norm{I_{3,h}\pa{t}}_{X}=0$.

We consider $R = \max\limits_{t\in [0,T]}\norm{u\pa{t}}_{X} + \varepsilon$ and $L$ the Lipschitz constant
of $F$ for $B_{R}\pa{0}\subset X$, if we define
\begin{align*}
J_{\varepsilon} = \set{0\le t < \min\set{T,T_{h}^{*}}: \norm{u_{h}\pa{t'}}_{X} < R + \varepsilon, 0\le t' \le t},
\end{align*}
from the estimate \eqref{eq: u-uh} we obtain for $t\in J_{\varepsilon}$:
\begin{align*}
\norm{u\pa{t}-u_{h}\pa{t}}_{X} & \le \norm{I_{1,h}\pa{t}}_{X} + \norm{I_{2,h}\pa{t}}_{X} + \norm{I_{3,h}\pa{t}}_{X} \\
& + 2 M e^{2\omega T} L \int_{0}^{t} \norm{u\pa{t'} - u_{h}\pa{t'}}_{X} dt',
\end{align*}
and from Gronwall's lemma
\begin{align*}
\norm{u\pa{t}-u_{h}\pa{t}}_{X} & \le e^{C T}\pa{\sup_{t\in [0,T]}\norm{I_{1,h}\pa{t}}_{X} 
+ \sup_{t\in [0,T]}\norm{I_{2,h}\pa{t}}_{X} + \sup_{t\in [0,T]}\norm{I_{3,h}\pa{t}}_{X}}
\end{align*}
where $C = 2 M e^{2\omega T} L$. Taking $h^{*}>0$ small enough,
we have that $\norm{u\pa{t}-u_{h}\pa{t}}_{X}<\varepsilon/2$ for $t\in J_{\varepsilon}$ y $0<h<h^{*}$.
Therefore $\sup J_{\varepsilon} = \min\set{T,T_{h}^{*}}$, but as $\norm{u_{h}\pa{t}} \le R + \varepsilon < \infty$,
it verifies $T < T_{h}^{*}$, that proves the theorem.
\end{proof}



\section{Global well posedness of the Cauchy problem} 
\label{section5}
In this section, we analyze the well posedness for the problem \eqref{eq: integral equation} for different interesting cases. 
The local case can be analyzed using standard methods, so we refer the reader to the bibliography.
We address the global problem, $t\in[0,\infty)$, by the notion of positively invariant convex families.
For classical diffusion ($\beta = 1$), similar ideas can be found in chapter 14 of \cite{Smoller1983}.
But this method presents two problems, the operator must be a differential elliptic operator and $u(x)$ belongs to a space of finite dimension, in order to use some maximum principle.
Both difficulties are overcome considering the Lie-Trotter approximations and then passing to the limit.
We take advantage of this, to study the evolution of a population model, where individuals have a characteristic trait that differentiates them.
In \cite{Arnold2012} the existence of stationary solutions is studied, for a scalar characteristic trait.
In order not to limit a priori the possibilities of modeling this problem, we consider the abstract case, where the characteristic trait is an element in a measure space.
\label{sec: Well posedness}

\begin{definition}
Let $\{K(t)\}_{t\in\mathbb{R}_{+}}$ be a family of closed sets of $Z$, 
we say that $\{K(t)\}_{t\in\mathbb{R}_{+}}$ is positively $F$--invariant if
for any $t_{0}\in\mathbb{R}_{+}$, $z_{0}\in K(t_{0})$, the solution $z$ of \eqref{eq: integral equation} verifies
$z(t)\in K(t)$ for $t\in [t_{0},t_{0}+T^{*}(t_{0},z_{0}))$. The family $\{K(t)\}_{t\in\mathbb{R}_{+}}$
is increasing if $K(t')\subseteq K(t)$ for $0\le t'\le t$.
\end{definition}
\begin{example}
Let $a,b\in C(\mathbb{R}_{+})$ be positive continuous functions defined on $\mathbb{R}_{+}$ such that 
$|F(t,z)|_{Z}\le a(t)+b(t)|z|_{Z}$
for $(t,z)\in \mathbb{R}_{+}\times Z$, we claim that the family of closed balls 
given by $B(t)=\{z\in Z:|z|_{Z}\le \lambda(t) \}$, with
\begin{align*}
\lambda(t) = \left(\lambda_{0} + \int_{0}^{t} a(t') dt'\right)\exp\left(\int_{0}^{t} b(t') dt'\right),
\end{align*}
is increasing and positively $F$--invariant family of (convex) closed sets.
Indeed, since $\lambda(t)$ is a increasing function, it is clear that $\{B(t)\}_{t\in\mathbb{R}_{+}}$ is increasing family.
Let $z_{0}\in B(t_{0})$, from \eqref{eq: integral equation} we obtain
\begin{align*}
 |z(t)|_{Z} \le |z_{0}|_{Z} + \int_{t_{0}}^{t} |F(t',z(t'))|_{Z} dt' 
 \le \lambda(t_{0}) + \int_{t_{0}}^{t} \left(a(t')+b(t')|z|_{Z}\right) dt'.
\end{align*}
From Gronwall's Lemma, we have
\begin{align*}
 |z(t)|_{Z} \le \left(\lambda(t_{0}) + \int_{t_{0}}^{t} a(t') dt'\right) \exp\left(\int_{t_{0}}^{t} b(t')|z|_{Z}\right) 
 \le \lambda(t),
\end{align*}
which implies $z(t)\in B(t)$.
\end{example}
\begin{lemma}
Let $\{K(t)\}_{t\in\mathbb{R}_{+}}$ be a family of closed sets of $Z$. If $\{K(t)\}_{t\in\mathbb{R}_{+}}$ is positively $F$--invariant
and $F$ is autonomous, 
then for any $z_{0}\in K(t_{0})$ and $0<h<T^{*}(z_{0})$, the solution $z$ of \eqref{eq: integral equation2}
with initial condition $z(t_{0}+h/2) = z_{0}$, verifies $z(t_{0}+h)\in K(t_{0}+h)$.
\end{lemma}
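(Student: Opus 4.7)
The plan is to exploit the observation that, when $F$ is autonomous, equation~\eqref{eq: integral equation2} (which reads $\dot w = 2 F(w)$) is merely a time-rescaling of the reference equation~\eqref{eq: integral equation}, i.e.\ $\dot w = F(w)$, with respect to which $\{K(t)\}$ is positively invariant by hypothesis. A trajectory of the doubled flow traversing an interval of length $h/2$ traces the same points as a trajectory of the original flow traversing an interval of length $h$, so the assumed invariance for the slower flow will transfer automatically.

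Concretely, I would first define $y : [t_0, t_0 + h] \to Z$ by $y(t) := z(t_0 + h/2 + (t - t_0)/2)$. Direct differentiation gives $y'(t) = F(y(t))$, and $y(t_0) = z(t_0 + h/2) = z_0$, so $y$ is the solution of~\eqref{eq: integral equation} starting from $z_0 \in K(t_0)$ at time $t_0$. The same rescaling identification shows that the maximal lifespan of $z$ from the initial datum $z(t_0 + h/2) = z_0$ equals $T^*(z_0)/2$, so the hypothesis $h < T^*(z_0)$ is exactly what is needed for $z(t_0 + h)$ to be defined; by the same correspondence, $y(t_0 + h)$ is also defined.

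Then I would invoke the positive $F$-invariance of $\{K(t)\}$ applied to the solution $y$ of~\eqref{eq: integral equation}, obtaining $y(t) \in K(t)$ for every $t \in [t_0, t_0 + T^*(z_0))$, and in particular $y(t_0 + h) \in K(t_0 + h)$. Unwinding the definition, $y(t_0 + h) = z(t_0 + h/2 + h/2) = z(t_0 + h)$, which is the desired conclusion.

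There is no substantive obstacle here beyond careful bookkeeping: the only delicate point is matching the two different time intervals and confirming that the lifespan of $z$ actually reaches $t_0 + h$, both of which are handled by the rescaling. The conceptual content is simply that running the doubled vector field $2F$ for half the time produces the same trajectory as running $F$ for the full time, so positive $F$-invariance transfers without additional work.
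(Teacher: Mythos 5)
Your proof is correct and follows essentially the same route as the paper: the substitution $y(t)=z\bigl(t_{0}+h/2+(t-t_{0})/2\bigr)$ is exactly the paper's reparametrization $w(t)=z((t+t_{0}+h)/2)$, which converts the $2F$-trajectory on $[t_{0}+h/2,t_{0}+h]$ into an $F$-trajectory on $[t_{0},t_{0}+h]$ starting from $z_{0}\in K(t_{0})$, after which positive $F$-invariance gives the conclusion. Your additional remark matching the lifespans under the time rescaling is a small point the paper leaves implicit, but the argument is the same.
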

\begin{proof}
Let $w(t) = z((t+t_{0}+h)/2)$, we have
\begin{align*}
 w(t_{0}+h) & = z(t_{0}+h) = z_{0} + \int_{t_{0}+h/2}^{t_{0}+h}2F(z(t')) dt' \\
 & = z_{0} + \int_{t_{0}}^{t_{0}+h}F(z((t+t_{0}+h)/2)) dt = z_{0} + \int_{t_{0}}^{t_{0}+h}F(w(t)) dt.
\end{align*}
Using $\{K(t)\}_{t\in\mathbb{R}_{+}}$ is positively $F$--invariant, we have $w(t_{0}+h)\in K(t_{0}+h)$.
\end{proof}
\begin{lemma}
Let $\{K(t)\}_{t\in\mathbb{R}_{+}}$ be a increasing family of closed sets of $Z$ such that $\{K(t)\}_{t\in\mathbb{R}_{+}}$
is positively $2F$--invariant
then for any $z_{0}\in K(t_{0})$ and $0<h<T^{*}(z_{0})$, the solution $z$ of \eqref{eq: integral equation2}
with initial condition $z(t_{0}+h/2) = z_{0}$, verifies $z(t_{0}+h)\in K(t_{0}+h)$.
\end{lemma}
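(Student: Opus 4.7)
The plan is to use the increasing property of the family to transport the initial datum from $K(t_0)$ into $K(t_0+h/2)$, after which the $2F$--invariance hypothesis applies directly with a shifted initial time. Concretely, set $t_1 = t_0 + h/2$, so that $z$ is by definition the solution of the Cauchy problem $\dot z = 2F(t,z)$ with $z(t_1)=z_0$.

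The first step is to promote the initial datum from $K(t_0)$ to $K(t_1)$: since $t_0 \le t_1$ and the family is increasing, $K(t_0)\subseteq K(t_1)$, hence $z_0 \in K(t_1)$. The positive $2F$--invariance of $\{K(t)\}$, applied with initial time $t_1$ and initial condition $z_0 \in K(t_1)$, then yields $z(t) \in K(t)$ for every $t \in [t_1, t_1 + T^*(t_1,z_0))$. Taking $t = t_0 + h = t_1 + h/2$, which lies in this interval by the standing hypothesis $h < T^*(z_0)$ (interpreted as the maximal existence time of the Cauchy problem starting at $(t_1,z_0)$, consistently with the notation used in the preceding lemma), we conclude $z(t_0+h) \in K(t_0+h)$.

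There is essentially no technical obstacle here. The autonomous version of the lemma required the reparametrization $w(t)=z((t+t_0+h)/2)$ to absorb the factor $2$ in front of $F$ and match an $F$--invariance hypothesis; since the present statement already assumes $2F$--invariance, no such rescaling is needed. The sole role played by the new monotonicity assumption is to let us shift the reference initial time of the invariance definition from $t_0$ to $t_1$, which is the one place where the argument could otherwise fail in the non-autonomous setting.
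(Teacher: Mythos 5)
Your proof is correct and coincides with the paper's own argument, which is exactly the two-step observation that $z_{0}\in K(t_{0})\subseteq K(t_{0}+h/2)$ by monotonicity and that positive $2F$--invariance applied at the shifted initial time $t_{0}+h/2$ then gives $z(t_{0}+h)\in K(t_{0}+h)$. Your version merely spells out the details (and the reading of $T^{*}$) that the paper leaves implicit.
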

\begin{proof}
Since $z_{0}\in K(t_{0})\subseteq K(t_{0}+h/2)$ and $\{K(t)\}_{t\in\mathbb{R}_{+}}$ is positively $2F$--invariant,
the result follows.
\end{proof}
\begin{corollary}
\label{co: Nhh2}
Let $F:\mathbb{R}_{+} \times Z \to Z$ be a continuous map locally Lipschitz 
in the second variable and $\{K(t)\}_{t\in\mathbb{R}_{+}}$ is a family of closed sets of $Z$.
If one of the following conditions holds
\begin{itemize}
 \item[-] $F$ is autonomous and $\{K(t)\}_{t\in\mathbb{R}_{+}}$ is positively $F$--invariant,
 \item[-] $\{K(t)\}_{t\in\mathbb{R}_{+}}$ is increasing and positively $2F$--invariant,
\end{itemize}
then for any $u_{0}\in C_{\rm u}(\mathbb{R}^{d},K(t_{0}))$ and $0<h<T^{*}(t_{0},u_{0})$ it is verified
\begin{align*}
 {\sf N}(t_{0}+h,t_{0}+h/2,u_{0}) \in C_{\rm u}(\mathbb{R}^{d},K(t_{0}+h)).
\end{align*}
\end{corollary}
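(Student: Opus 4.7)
The plan is to reduce the assertion to the two pointwise lemmas that immediately precede the corollary by exploiting the fact that the Nemytskii-type extension $F(t,u)(x) = F(t,u(x))$ causes the flow $\mathsf{N}$ in $C_{\rm u}(\mathbb{R}^{d},Z)$ to act coordinate-wise over $x\in\mathbb{R}^{d}$. Concretely, for fixed $x$, set $z_{x}(t) = \bigl(\mathsf{N}(t,t_{0}+h/2,u_{0})\bigr)(x)$ and note that this $Z$-valued function solves the ODE \eqref{eq: integral equation2} in $Z$ with initial condition $z_{x}(t_{0}+h/2) = u_{0}(x)$. The key observation justifying this is uniqueness: the map $t\mapsto \bigl(\mathsf{N}(t,t_{0}+h/2,u_{0})\bigr)(x)$ is an integral solution in $Z$ of the pointwise problem, and the locally Lipschitz hypothesis on $F$ makes the pointwise $Z$-valued Cauchy problem uniquely solvable. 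Moreover, since $\|\mathsf{N}(t,t_{0}+h/2,u_{0})\|_{\infty,Z}$ stays finite on $[t_{0}+h/2,t_{0}+h]$ by the hypothesis $h<T^{*}(t_{0},u_{0})$, the pointwise solution $z_{x}$ is defined on the whole interval.

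Next I would apply the appropriate preceding lemma at each point $x$. The initial datum satisfies $z_{x}(t_{0}+h/2)=u_{0}(x)\in K(t_{0})$ because $u_{0}\in C_{\rm u}(\mathbb{R}^{d},K(t_{0}))$. Under the first alternative ($F$ autonomous and $\{K(t)\}$ positively $F$-invariant) I invoke the first lemma with $z_{0}=u_{0}(x)$ to conclude $z_{x}(t_{0}+h)\in K(t_{0}+h)$; under the second alternative ($\{K(t)\}$ increasing and positively $2F$-invariant) I invoke the second lemma with the same $z_{0}$ to reach the identical conclusion. In either scenario, every pointwise value of $\mathsf{N}(t_{0}+h,t_{0}+h/2,u_{0})$ at time $t_{0}+h$ lies in the closed set $K(t_{0}+h)$.

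To finish, I would observe that $\mathsf{N}(t_{0}+h,t_{0}+h/2,u_{0})$ already belongs to $C_{\rm u}(\mathbb{R}^{d},Z)$ by construction of the flow in that space, so combining this with the pointwise membership just established yields $\mathsf{N}(t_{0}+h,t_{0}+h/2,u_{0})\in C_{\rm u}(\mathbb{R}^{d},K(t_{0}+h))$. The only non-cosmetic obstacle I anticipate is the bookkeeping check that the pointwise ODE in $Z$ really coincides with the pointwise evaluation of the $C_{\rm u}$-valued flow; this is settled by the uniqueness statement following \eqref{eq: integral equation} together with the fact that the pointwise restriction of an integral solution in $C_{\rm u}$ is an integral solution in $Z$. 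Everything else is a direct transcription of the two preceding lemmas applied fibre-by-fibre over $x$.
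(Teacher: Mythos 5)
Your proposal is correct and follows exactly the route the paper intends: the paper states this corollary without proof, as an immediate consequence of the two preceding lemmas applied fibrewise, and your argument simply makes explicit the standard justification (evaluation at $x$ commutes with the Bochner integral, so the pointwise restriction of the $C_{\rm u}$-valued flow is the $Z$-valued flow by uniqueness, after which the appropriate lemma applies at each $x$). No gaps.
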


\begin{lemma}
\label{le: G*u}
Let $\sigma \ge 0$, $0 < \beta \le 1$ and let $K$ be a closed convex set of $Z$,
for any $t>0$ and $u\in C_{u}(\mathbb{R}^{d},K)$, it holds ${\sf S}(t)u \in C_{u}(\mathbb{R}^{d},K)$.
\end{lemma}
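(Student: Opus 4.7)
The plan is to reduce the statement to the fact that, for fixed $t>0$ and $\sigma>0$, the function $y\mapsto G_{\sigma,\beta}(y,t)$ is a probability density on $\mathbb{R}^d$ (positive, integrable, with unit mass by the preceding proposition), so that
\begin{align*}
({\sf S}(t)u)(x) = \int_{\mathbb{R}^{d}} G_{\sigma,\beta}(y,t)\,u(x-y)\,dy
\end{align*}
is a ``continuous convex average'' of the values $u(x-y)\in K$. The case $\sigma=0$ is trivial because then ${\sf S}(t)$ acts as the identity and there is nothing to prove. So I would fix $\sigma>0$ and $u\in C_{\rm u}(\mathbb{R}^{d},K)$ and argue pointwise in $x$ that $({\sf S}(t)u)(x)\in K$.

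To carry this out rigorously in a general Banach space $Z$, I would use a Hahn--Banach separation argument. Suppose, for contradiction, that $z_{0}:=({\sf S}(t)u)(x)\notin K$ for some $x\in\mathbb{R}^{d}$. Since $K$ is closed, convex and nonempty, by Hahn--Banach separation there exist $\phi\in Z^{*}$ and $c\in\mathbb{R}$ such that $\phi(w)\le c$ for every $w\in K$ and $\phi(z_{0})>c$. A continuous linear functional commutes with the Bochner integral, so
\begin{align*}
\phi(z_{0}) = \int_{\mathbb{R}^{d}} G_{\sigma,\beta}(y,t)\,\phi(u(x-y))\,dy \le c\int_{\mathbb{R}^{d}} G_{\sigma,\beta}(y,t)\,dy = c,
\end{align*}
where the inequality uses positivity of $G_{\sigma,\beta}(\cdot,t)$ together with $\phi(u(x-y))\le c$ (which holds because $u(x-y)\in K$), and the last equality uses the unit-mass property. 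This contradicts $\phi(z_{0})>c$, so $({\sf S}(t)u)(x)\in K$ for every $x\in\mathbb{R}^{d}$.

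Finally, the earlier proposition establishing that ${\sf S}$ is a continuous contraction semigroup on $C_{\rm u}(\mathbb{R}^{d},Z)$ already gives that ${\sf S}(t)u$ is uniformly continuous and bounded from $\mathbb{R}^{d}$ into $Z$, so combining this with the pointwise inclusion into $K$ yields ${\sf S}(t)u\in C_{\rm u}(\mathbb{R}^{d},K)$. I do not anticipate any serious obstacle: the only mildly delicate point is invoking the commutation of $\phi\in Z^{*}$ with the Bochner integral, which holds because $u$ is bounded and measurable and $G_{\sigma,\beta}(\cdot,t)\in L^{1}(\mathbb{R}^{d})$. An equivalent route would be to approximate the integral by Riemann sums, each of which is a finite convex combination of elements of $K$, hence lies in $K$; closedness of $K$ then gives the limit. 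Either formulation is short and essentially mechanical.
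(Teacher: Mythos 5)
Your proposal is correct and follows essentially the same route as the paper: a Hahn--Banach separation argument combined with the positivity and unit mass of $G_{\sigma,\beta}(\cdot,t)$ to derive a contradiction. The additional remarks on the $\sigma=0$ case and on uniform continuity are harmless elaborations of points the paper leaves implicit.
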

\begin{proof}
Suppose, contrary to our claim, that the assertion of the lemma is false. 
Then, there exists $(t,x)\in\mathbb{R}^{d}\times \mathbb{R}_{+}$
such that $v = ({\sf S}(t) u)(x)\notin K$. Using Hahn-Banach separation theorem, we take a separating hyperplane; 
i.e., $\omega\in Z^{*}$ and $\lambda\in\mathbb{R}$ verifying
$\langle\omega,z\rangle\le\lambda$ for any $z\in K$ and $\langle\omega,v\rangle > \lambda$, but
\begin{align*}
\left\langle\omega, v \right\rangle 
& = \int_{\mathbb{R}^{d}}G_{\sigma,\beta}(x-y,t)\left\langle\omega,u(y)\right\rangle dy
\le \lambda \int_{\mathbb{R}^{d}}G_{\sigma,\beta}(x-y,t)dy = \lambda,
\end{align*}
a contradiction.
\end{proof}
\begin{proposition}
\label{pr: u in CK}
Let $F$ and $\{K(t)\}_{t\in\mathbb{R}_{+}}$ be as Corollary \ref{co: Nhh2}.
If $K(t)$ is convex for $t\ge0$, then $u(t)\in C_{u}(\mathbb{R}^{d},K(t))$ for any $u_{0}\in C_{u}(\mathbb{R}^{d},K(0))$ and $t \in (0,T^{*}(u_{0}))$,
where $u$ is the solution of \eqref{eq: mild solution}.
\end{proposition}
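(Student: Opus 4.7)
The plan is to exploit the Lie--Trotter splitting constructed in Section \ref{section4}, for which the linear and nonlinear half-steps can each be shown to preserve $C_{u}(\mathbb{R}^{d},K(t))$, and then to pass to the limit using Theorem \ref{th: convergencia} together with the fact that $C_{u}(\mathbb{R}^{d},K(t))$ is a closed subset of $C_{u}(\mathbb{R}^{d},Z)$.

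Fix $t\in(0,T^{*}(u_{0}))$ and $\varepsilon>0$. By Theorem \ref{th: convergencia} there is $h^{*}>0$ such that the approximation $u_{h}$ of \eqref{eq: u aprox} with initial datum $u_{0}$ exists on $[0,t]$ and satisfies $\|u(t)-u_{h}(t)\|_{C_{u}(\mathbb{R}^{d},Z)}<\varepsilon$ for every $0<h<h^{*}$. I would then restrict to values of $h$ of the form $h=t/n$ for integers $n$ large enough that $h<h^{*}$, so that $t$ is exactly a grid point of the splitting scheme: $u_{h}(t)=U_{h,n}$ in the notation of Proposition \ref{pr: pasospropagador}.

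The core step is an induction on $k=0,1,\ldots,n$ showing $U_{h,k}\in C_{u}(\mathbb{R}^{d},K(kh))$. The base case is the hypothesis $U_{h,0}=u_{0}\in C_{u}(\mathbb{R}^{d},K(0))$. For the inductive step, assume $U_{h,k}\in C_{u}(\mathbb{R}^{d},K(kh))$. Since $K(kh)$ is closed and convex, Lemma \ref{le: G*u} gives
\begin{align*}
V_{h,k+1}={\sf S}(h)\,U_{h,k}\in C_{u}(\mathbb{R}^{d},K(kh))\subseteq C_{u}(\mathbb{R}^{d},K(kh+h/2)),
\end{align*}
where the inclusion uses that $\{K(t)\}$ is increasing (in the autonomous case this inclusion is automatic since the family can be taken constant, cf. the two alternatives of Corollary \ref{co: Nhh2}). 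Then Corollary \ref{co: Nhh2} applied with $t_{0}=kh$ and initial datum $V_{h,k+1}$ yields
\begin{align*}
U_{h,k+1}={\sf N}(kh+h,kh+h/2,V_{h,k+1})\in C_{u}(\mathbb{R}^{d},K(kh+h)),
\end{align*}
which closes the induction. In particular $u_{h}(t)=U_{h,n}\in C_{u}(\mathbb{R}^{d},K(t))$.

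To conclude, I would note that $C_{u}(\mathbb{R}^{d},K(t))$ is closed in $C_{u}(\mathbb{R}^{d},Z)$: if $v_{n}\to v$ uniformly and each $v_{n}(x)\in K(t)$, then $v(x)\in K(t)$ because $K(t)$ is closed in $Z$. Since $u_{h}(t)\to u(t)$ in $C_{u}(\mathbb{R}^{d},Z)$ as $h\to 0^{+}$ along the sequence $h=t/n$, the claim $u(t)\in C_{u}(\mathbb{R}^{d},K(t))$ follows. The main (minor) technical point to handle carefully is the compatibility of the grid with the target time $t$ and the verification that each application of Corollary \ref{co: Nhh2} is legitimate: this is guaranteed because the approximation $u_{h}$ is already known to exist up to time $t$ by Theorem \ref{th: convergencia}, so the maximal existence time hypothesis $h<T^{*}$ needed at each nonlinear half-step is automatically satisfied.
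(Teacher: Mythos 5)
Your proposal is correct and follows essentially the same route as the paper's own proof: Lie--Trotter splitting on the grid $h=t/n$, induction showing $U_{h,k}\in C_{u}(\mathbb{R}^{d},K(kh))$ via Lemma \ref{le: G*u} for the linear half-step and Corollary \ref{co: Nhh2} for the nonlinear half-step, then passage to the limit using Theorem \ref{th: convergencia} and the closedness of $K(t)$. Your extra care about grid compatibility and the half-step time $kh+h/2$ is sound but already absorbed into the statement of Corollary \ref{co: Nhh2}, which takes a datum in $K(t_{0})$ at time $t_{0}+h/2$ and lands in $K(t_{0}+h)$.
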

\begin{proof}
For $t \in [0, T^{*}(u_{0}))$ and $n\in\mathbb{N}$, Let $h = t/n$ and 
$\{U_{h,k}\}_{0\le k\le n},\{V_{h,k}\}_{1\le k\le n}$ be the sequences given by $U_{h,0} = u_{0}$,
\begin{subequations}
\label{eq: L-T}
\begin{align}
V_{h,k+1} & = {\sf S}(h) U_{h,k}, \\
U_{h,k+1} & = \mathsf{N}(kh+h,kh+h/2,V_{h,k+1}), \quad k=0,\dots,n-1.
\end{align}
\end{subequations}
From Proposition \ref{pr: pasospropagador} and Theorem \ref{th: convergencia}, 
it may be concluded that $U_{h,k}$ is defined and $\|u(t)-U_{h,n}\|_{\infty,Z}\to 0$ when $n\to\infty$.
Since $K(t)$ is a closed set, it is suffices to prove that $U_{h,n}\in K(t)$.
We claim that $U_{h,k}\in C_{u}(\mathbb{R}^{d},K(kh))$, the proof is by induction on $k$. 
If $U_{h,k}\in C_{u}(\mathbb{R}^{d},K(kh))$, as $K(kh)$ is convex, Lemma \ref{le: G*u} implies
$V_{h,k+1}\in C_{u}(\mathbb{R}^{d},K(kh))$.
From Corollary \ref{co: Nhh2}, $U_{h,k+1}\in C_{u}(\mathbb{R}^{d},K((k+1)h))$
and our claim follows.
\end{proof}
\begin{theorem}
\label{th: global existence}
Let $\{K(t)\}_{t\in\mathbb{R}_{+}}$ be a family of bounded convex closed sets of $Z$. 
Suppose that $F$ and $\{K(t)\}_{t\in\mathbb{R}_{+}}$ satisfy the hypothesis of Corollary \ref{co: Nhh2} and
for any $T>0$, it is verified $M(T) = \sup\{|z|_{Z}: z\in K(t), t\in[0,T] \}<\infty$,
then for any $u_{0}\in C_{u}(\mathbb{R}^{d},K(0))$, it holds $T^{*}(u_{0}) = \infty$ and $u(t)\in C_{u}(\mathbb{R}^{d},K(t))$ for $t>0$.
\end{theorem}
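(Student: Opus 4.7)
The plan is to combine the invariance statement of Proposition \ref{pr: u in CK} with the blow-up alternative of Theorem \ref{th: local existence}, using the uniform boundedness hypothesis $M(T)<\infty$ to rule out finite-time blow-up.

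First, I would invoke Proposition \ref{pr: u in CK} directly: since $F$ and $\{K(t)\}_{t\in\mathbb{R}_+}$ satisfy the hypotheses of Corollary \ref{co: Nhh2}, and each $K(t)$ is (bounded) closed and convex, the proposition yields $u(t)\in C_{u}(\mathbb{R}^d,K(t))$ for every $t\in[0,T^{*}(u_{0}))$. This is the confinement half of the conclusion, valid on the entire maximal interval of existence, and it is obtained without any extra work.

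Next, I would argue that $T^{*}(u_{0})=\infty$ by contradiction. Suppose $T^{*}(u_{0})<\infty$ and set $T=T^{*}(u_{0})$. For every $t\in[0,T)$ and every $x\in\mathbb{R}^d$ we have $u(t,x)\in K(t)\subseteq \bigcup_{s\in[0,T]}K(s)$, so the hypothesis $M(T)<\infty$ gives
\begin{equation*}
\|u(t)\|_{\infty,Z}=\sup_{x\in\mathbb{R}^d}|u(t,x)|_{Z}\le M(T)<\infty \quad\text{for all } t\in[0,T).
\end{equation*}
In particular $\limsup_{t\uparrow T}\|u(t)\|_{\infty,Z}\le M(T)<\infty$, which contradicts the second alternative of Theorem \ref{th: local existence} that would require $\|u(t)\|_{\infty,Z}\to\infty$. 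Hence the first alternative must hold, i.e.\ $T^{*}(u_{0})=\infty$, and combined with the first paragraph the theorem follows.

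There is essentially no obstacle here: the technical work has already been discharged in Proposition \ref{pr: u in CK} (where the Lie--Trotter approximants $U_{h,k}$ are shown inductively to remain in $C_u(\mathbb{R}^d,K(kh))$, using Lemma \ref{le: G*u} for the linear half-step and Corollary \ref{co: Nhh2} for the nonlinear half-step, then passing to the limit via Theorem \ref{th: convergencia}). The present theorem is merely the packaging step that uses boundedness of the invariant family to promote local existence to global existence; the only subtlety worth double-checking is that $M(T)$ controls $\|u(t)\|_{\infty,Z}$ on the open interval $[0,T^{*}(u_{0}))$ even though the supremum defining $M$ is taken over the closed interval $[0,T^{*}(u_{0})]$, which is exactly why the hypothesis is stated for arbitrary $T>0$ rather than for the (a priori unknown) maximal time.
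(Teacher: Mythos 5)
Your proposal is correct and follows essentially the same route as the paper: apply Proposition \ref{pr: u in CK} to get $u(t)\in C_{u}(\mathbb{R}^{d},K(t))$ on the maximal interval, then use the bound $\|u(t)\|_{\infty,Z}\le M(T^{*}(u_{0}))<\infty$ to contradict the blow-up alternative of Theorem \ref{th: local existence}. No substantive differences.
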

\begin{proof}
From Proposition \ref{pr: u in CK}, we have $u(t)\in C_{u}(\mathbb{R}^{d},K(t))$ for $t \in (0,T^{*}(u_{0}))$.
Suppose $T^{*}(u_{0})<\infty$, then $\lim_{t\to T^{*}(u_{0})}\|u(t)\|_{\infty,Z} = \infty$. But
$\|u(t)\|_{\infty,Z} \le M(T^{*}(u_{0})) < \infty$, a contradiction.
\end{proof}
\begin{example}[Ginzburg-Landau equation]
The Ginzburg-Landau equation is given by \eqref{eq: reaction-diffusion},
where $\beta = 1$, $\sigma>0$
and $F(u) = (1 + i a)u-(1 + i b)|u|^{2}u$ with $a,b\in\mathbb{R}$ (see \cite{Cartwright2007}, \cite{Chueh1977} and \cite{Weimar1994}).
In general, we consider $F(u) = f_{\rm R}(|u|^{2})u + i f_{\rm I}(|u|^{2})u$, where $f_{\rm R},f_{\rm I} :\mathbb{R}_{+}\to\mathbb{R}$
are smooth functions. If $f_{\rm R}(\eta) \le 0$ for $\eta > 0$, then $K = B(0,\eta)$
is a bounded convex positively $F$--invariant set of $\mathbb{C}$.
For $0 < \beta \le 1$, from Theorem \ref{th: global existence}, we obtain that the fractional Ginzburg-Landau equation
is globally well posed for $u_{0}\in C_{u}(\mathbb{R}^{d},K)$. 
\end{example}
\begin{example}[Fisher--Kolmogorov equation]
Fisher \cite{Fisher1937} and Kolmogorov et al \cite{Kolmogorov1989}
introduced a classical model to describe the 
propagation of an advantageous gene in a one-dimensional habitat. 
We consider the generalized non-linear reaction-diffusion equation
\begin{align*}
\partial_{t}u + \sigma (-\Delta)^{\beta}u =\chi u (1-u),
\end{align*}
where $u$ is the chemical concentration, $\sigma$ is the diffusion coefficient and the positive constant $\chi$
represents the growth rate of the chemical reaction. Since then a 
great deal of work has been carried out to extend their 
model to take into account the other biological, chemical 
and physical factors. This equation is also used in 
flame propagation (\cite{Frank-Kamenetskii1955}), nuclear reactor theory (\cite{Canosa1969}), autocatalytic chemical reactions
(\cite{Cohen1971} and \cite{Fife1977}), logistic growth models (\cite{Murray1977}) and neurophysiology (\cite{Tuckwell1988}). 
Consider $b_{0}>1$ and $K(t)=[0,b(t)]$, with
\begin{align*}
b(t) = \frac{b_{0}e^{\chi t}}{1+b_{0}(e^{\chi t}-1)},
\end{align*}
we can see that $\{K(t)\}_{t\in\mathbb{R}_{+}}$ is a family of compact
intervals, positively $F$--invariant for $F(z)=\chi z (1-z)$. 
In particular,
for any $u_{0}\in C_{\rm u}(\mathbb{R})$ with 
$u_{0}(x) \ge 0$, taking $b_{0}=\sup_{x\in\mathbb{R}^{d}}u(x)$,
we can see that $T^{*}(u_{0})=\infty$ and 
$\lim\sup_{t\to\infty}|u(t,x)|\le 1$ for any $x\in\mathbb{R}^{d}$.
In the case $0<a_{0}=\inf_{x\in\mathbb{R}^{d}}u(x)<1$, we have that
$K(t) =[a(t),b(t)]$ with
\begin{align*}
a(t) = \frac{a_{0}e^{\chi t}}{1+a_{0}(e^{\chi t}-1)},
\end{align*}
is $F$--positive. Therefore, $\lim_{t\to\infty}\|u(t)-1\|_{\infty}=0$.
\end{example}
\subsection{Population dynamics with a continuous trait}
\label{subsec: population dynamics}
In \cite{Arnold2012}, Arnold et al. consider a model
of population dynamics in which the population is structured with respect to the space variable $x$ and a trait variable denoted by $\theta$.
The distribution function $u(t,x,\theta)\ge 0$ denote the number density
of individuals at time $t\in\mathbb{R}_{+}$, position $x\in\mathbb{R}^{d}$, and whose trait is $\theta \in \Theta$.
The evolution of $u$ is governed by an integro-PDE model of reaction-diffusion type
in infinite (continuous) dimension in which selection, mutations, competition, and
migrations are taken into account.
The modeling assumptions are the following: migration is described by a (normal or anomalous) diffusion operator $-\sigma(-\Delta)^{\beta}$; mutations are described by a linear kernel $M(\theta,\vartheta)$ which is related to the probability that individuals with trait $\vartheta$ have offsprings with trait $\theta$;
selection is implemented in the model, thanks to a fitness function
$k$ which may depend on trait $\theta$; 
finally a logistic term involving a kernel $C(\theta,\vartheta)$ models the competition (felt by individuals of trait $\theta$) due to individuals of trait $\vartheta$.
Under those assumptions, the evolution of the population is
governed by the following integro-PDE:
\begin{align}
\label{eq: in L1(X)}
\partial_{t}u + \sigma(-\Delta_{x})^{\beta}u = F(t,u(t))
\end{align}
with initial condition $u(0)=u_{0}$. The map $F$ is given by
\begin{align*}
F(t,z)(\theta) = k(t,\theta)z(\theta) & + \int_{\Theta}M(t,\theta,\vartheta )z(\vartheta ) d\mu(\vartheta) \\
& -\left(\int_{\Theta}C(t,\theta,\vartheta )z(\vartheta) d\mu(\vartheta) \right)z(\theta),
\end{align*}
Let $\Theta$ be a compact Hausdorff space, $\mathcal{B}$ the $\sigma$-algebra of Borel sets and $\mu$ a regular Borel probability,
we set the problem on $C_{\rm u}(\mathbb{R}^{d},Z)$, with $Z = L^{1}(\Theta,\mathcal{B},\mu)$.
Following \cite{Arnold2012}, we assume $k\in C(\mathbb{R}_{+}\times\Theta )$, 
$M,C\in C(\mathbb{R}_{+}\times\Theta\times\Theta )$ verifying $M \ge 0$ and $C>0$. For any $T>0$, we define
\begin{align*}
 \|k\|_{T,\infty} & = \max\{|k(t,\theta)|: (t,\theta)\in[0,T]\times\Theta\}, \\
 \|M\|_{T,\infty} & = \max\{M(t,\theta,\vartheta): (t,\theta,\vartheta)\in[0,T]\times\Theta\times\Theta\}, \\
 \|C\|_{T,\infty} & = \max\{C(t,\theta,\vartheta): (t,\theta,\vartheta)\in[0,T]\times\Theta\times\Theta\}.
\end{align*}
Also, we need
\begin{subequations}
\label{eq: kc}
\begin{align}
\label{eq: k}
k_{+}(t) & = \max\left\{k(t',\theta) + \int_{\Theta} M(t',\vartheta,\theta )d\mu(\vartheta)
: (t',\theta)\in [0,t]\times\Theta\right\},\\
c_{-}(t) & = \min\left\{C(t',\theta,\vartheta):(t',\theta,\vartheta) \in [0,t]\times\Theta\times\Theta \right\}.
\label{eq: c}
\end{align}
\end{subequations}
We assume that $c_{-}(t) > 0$ for $t>0$, the lower bound for $C$ means that all individuals are in competition.
To obtain well-posedness of \eqref{eq: in L1(X)}, we prove some previous results.
\begin{lemma}
The map $F:\mathbb{R}_{+}\times Z \to Z$ is continuous and locally Lipschitz in the second variable.
\end{lemma}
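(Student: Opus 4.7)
The plan is to estimate each of the three summands of $F(t,z)$ separately in the $L^{1}(\Theta,\mathcal{B},\mu)$ norm, using that $\mu$ is a probability measure together with the finite bounds $\|k\|_{T,\infty},\|M\|_{T,\infty},\|C\|_{T,\infty}$ on the compact set $[0,T]\times\Theta$ (resp.\ $[0,T]\times\Theta\times\Theta$). The multiplication term $k(t,\theta)z(\theta)$ is immediately bounded in $L^{1}$ by $\|k\|_{T,\infty}\|z\|_{L^{1}}$; Fubini's theorem together with $\mu(\Theta)=1$ gives the analogous bound $\|M\|_{T,\infty}\|z\|_{L^{1}}$ for the linear integral operator, and $\|C\|_{T,\infty}\|z\|_{L^{1}}^{2}$ for the competition term.

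For the local Lipschitz property in the second variable, I would fix $R,T>0$ and take $z,\tilde z\in Z$ with $\|z\|_{L^{1}},\|\tilde z\|_{L^{1}}\le R$. The linear pieces give the bound $(\|k\|_{T,\infty}+\|M\|_{T,\infty})\|z-\tilde z\|_{L^{1}}$. For the quadratic piece I would use the standard add-and-subtract identity
\begin{equation*}
\left(\int_{\Theta}C(t,\cdot,\vartheta)z(\vartheta)\,d\mu\right)z
-\left(\int_{\Theta}C(t,\cdot,\vartheta)\tilde z(\vartheta)\,d\mu\right)\tilde z
=\left(\int_{\Theta}C(t,\cdot,\vartheta)z(\vartheta)\,d\mu\right)(z-\tilde z)
+\left(\int_{\Theta}C(t,\cdot,\vartheta)(z-\tilde z)(\vartheta)\,d\mu\right)\tilde z,
\end{equation*}
so that the competition contribution is bounded by $2R\|C\|_{T,\infty}\|z-\tilde z\|_{L^{1}}$. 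Summing yields the Lipschitz constant $L(R,T)=\|k\|_{T,\infty}+\|M\|_{T,\infty}+2R\|C\|_{T,\infty}$.

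For continuity in $t$, I would fix $z\in Z$ and $t_{0}\ge 0$ and write $F(t,z)-F(t_{0},z)$ as a sum of three terms in which the kernels $k$, $M$, $C$ appear as increments $k(t,\theta)-k(t_{0},\theta)$, etc. The uniform continuity of $k$, $M$, $C$ on the relevant compact sets makes these sup-norm increments vanish as $t\to t_{0}$, and the same $L^{1}$ bounds as above then give $\|F(t,z)-F(t_{0},z)\|_{L^{1}}\to 0$. Joint continuity on $\mathbb{R}_{+}\times Z$ follows from combining this with the local Lipschitz estimate via the triangle inequality $\|F(t,z)-F(t_{0},z_{0})\|_{L^{1}}\le\|F(t,z)-F(t,z_{0})\|_{L^{1}}+\|F(t,z_{0})-F(t_{0},z_{0})\|_{L^{1}}$.

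The main obstacle, which is really just a bookkeeping point rather than a conceptual one, is handling the quadratic competition term: one must make sure the add-and-subtract splitting produces genuine $L^{1}$ estimates (rather than merely pointwise estimates in $\theta$) and that the factor $\|z\|_{L^{1}}$ appearing after integrating $C(t,\theta,\vartheta)z(\vartheta)$ in $\vartheta$ is correctly bounded by $R$ in the Lipschitz constant. Everything else is a direct application of $\mu(\Theta)=1$ and uniform continuity of the coefficients on compact sets.
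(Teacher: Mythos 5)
Your proposal is correct and follows essentially the same route as the paper: termwise $L^{1}$ estimates using $\mu(\Theta)=1$ and the sup-norms of $k,M,C$ on compacts, the add-and-subtract splitting of the competition term yielding the Lipschitz constant $\|k\|_{T,\infty}+\|M\|_{T,\infty}+2R\|C\|_{T,\infty}$, and joint continuity obtained by combining uniform continuity of the kernels in $t$ with the Lipschitz bound via the triangle inequality. No gaps.
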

\begin{proof}
Let $R,T>0$ and $z,\tilde{z}\in Z$ with $|z|_{Z},|\tilde{z}|_{Z}\le R$, we have
\begin{align*}
|F(t,z)-F(t,\tilde{z})|_{Z} & \le 
\int_{\Theta}|k(t,\theta)||z(\theta)-\tilde{z}(\theta)|d\mu(\theta)\\
& + \int_{\Theta\times \Theta}M(t,\theta,\vartheta )|z(\vartheta )-\tilde{z}(\vartheta )|d\mu(\vartheta) d\mu(\theta) \\
& + \int_{\Theta\times \Theta}C(t,\theta,\vartheta )|z(\vartheta )||z(\theta)-\tilde{z}(\theta)|
d\mu(\vartheta) d\mu(\theta) \\
& + \int_{\Theta\times \Theta}C(t,\theta,\vartheta )|\tilde{z}(\theta)||z(\vartheta)-\tilde{z}(\vartheta)|
d\mu(\vartheta) d\mu(\theta).
\end{align*}
Using $k,M,C$ are bounded for $t\in[0,T]$ and $\theta,\vartheta\in\Theta$, we get
\begin{align*}
|F(t,z)-F(t,\tilde{z})|_{Z} & \le 
\left(\|k\|_{T,\infty}+\|M\|_{T,\infty} + 2\|C\|_{T,\infty} R \right) |z-\tilde{z}|_{Z}.
\end{align*}
Let $(t_{n},z_{n})\to (t,z) \in [0,T]\times\Theta$, we can see that
\begin{align*}
|F(t,z)-F(t_{n},z_{n})|_{Z} & \le |F(t,z)-F(t_{n},z)|_{Z} + |F(t_{n},z)-F(t_{n},z_{n})|_{Z} \\
& \le |F(t,z)-F(t_{n},z)|_{Z} + L\pa{R,T} |z - z_{n}|_{Z},
\end{align*}
using that
\begin{align*}
|F(t,z)-F(t_{n},z)|_{Z} & \le \int_{\Theta}|k(t,\theta) - k(t_{n},\theta)||z(\theta)|d\mu(\theta)\\
 & + \int_{\Theta\times \Theta}|M(t,\theta,\vartheta ) - M(t_{n},\theta,\vartheta)||z(\vartheta )|d\mu(\vartheta) d\mu(\theta) \\
 & + \int_{\Theta\times \Theta}|C(t,\theta,\vartheta) - C(t_{n},\theta,\vartheta)||z(\vartheta )||z(\theta)|
d\mu(\vartheta) d\mu(\theta),
\end{align*}
from uniform continuity of $k,M,C$, we obtain $F(t_{n},z) \to F(t,z)$ in $Z$, which complete the proof.
\end{proof}
We have the same result for continuous functions:
\begin{lemma}
The map $F:\mathbb{R}_{+}\times C( \Theta )\to C( \Theta )$ is continuous and locally Lipschitz in the second variable.
\end{lemma}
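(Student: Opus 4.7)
The plan is to mimic the argument of the preceding $L^{1}$-lemma, replacing $L^{1}$-estimates by sup-norm estimates and using uniform continuity of $k,M,C$ on compact sets. The main technical issue is a bookkeeping one: we must carry the supremum over $\theta$ inside the relevant integrals and verify that $F(t,z)$ actually lies in $C(\Theta)$ (which is automatic in $L^{1}$ but requires a short argument here).

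First I would check that $F$ maps $\mathbb{R}_{+}\times C(\Theta)$ into $C(\Theta)$. For fixed $(t,z)$, the map $\theta\mapsto k(t,\theta)z(\theta)$ is continuous as a product of continuous functions, while the map $\theta\mapsto\int_{\Theta}M(t,\theta,\vartheta)z(\vartheta)d\mu(\vartheta)$ is continuous by uniform continuity of $M$ on the compact set $[0,T]\times\Theta\times\Theta$ together with the fact that $z$ is bounded; the same reasoning handles the competition integral. Hence $F(t,z)\in C(\Theta)$.

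Next I would establish the local Lipschitz estimate in the second variable. Given $R,T>0$ and $z,\tilde z\in C(\Theta)$ with $\|z\|_{\infty},\|\tilde z\|_{\infty}\le R$, I would write pointwise
\begin{align*}
|F(t,z)(\theta)-F(t,\tilde z)(\theta)|
&\le |k(t,\theta)||z(\theta)-\tilde z(\theta)| \\
&\quad+\int_{\Theta}M(t,\theta,\vartheta)|z(\vartheta)-\tilde z(\vartheta)|d\mu(\vartheta) \\
&\quad+\int_{\Theta}C(t,\theta,\vartheta)|z(\vartheta)|d\mu(\vartheta)\,|z(\theta)-\tilde z(\theta)| \\
&\quad+|\tilde z(\theta)|\int_{\Theta}C(t,\theta,\vartheta)|z(\vartheta)-\tilde z(\vartheta)|d\mu(\vartheta),
\end{align*}
use the bounds $\|k\|_{T,\infty},\|M\|_{T,\infty},\|C\|_{T,\infty}$ together with $\mu(\Theta)=1$, and finally take the supremum over $\theta\in\Theta$. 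This yields
\begin{equation*}
\|F(t,z)-F(t,\tilde z)\|_{\infty}\le\bigl(\|k\|_{T,\infty}+\|M\|_{T,\infty}+2\|C\|_{T,\infty}R\bigr)\|z-\tilde z\|_{\infty},
\end{equation*}
which is the desired local Lipschitz bound with constant $L(R,T)$.

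Finally, for continuity I would split
\begin{equation*}
\|F(t,z)-F(t_{n},z_{n})\|_{\infty}\le\|F(t,z)-F(t_{n},z)\|_{\infty}+L(R,T)\|z-z_{n}\|_{\infty},
\end{equation*}
where $R$ is a bound for $\|z\|_{\infty}$ and a tail of $\|z_{n}\|_{\infty}$. The second term vanishes as $z_{n}\to z$. For the first term, by uniform continuity of $k$, $M$, $C$ on $[0,T]\times\Theta$ (resp.\ $[0,T]\times\Theta\times\Theta$), each of the three contributions $|k(t,\theta)-k(t_{n},\theta)||z(\theta)|$, $\int_{\Theta}|M(t,\theta,\vartheta)-M(t_{n},\theta,\vartheta)||z(\vartheta)|d\mu(\vartheta)$, and the analogous one for $C$, tends to zero uniformly in $\theta$ as $t_{n}\to t$, so $F(t_{n},z)\to F(t,z)$ in $C(\Theta)$. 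The only subtle point is that passing from $L^{1}$ to $C(\Theta)$ requires uniform (in $\theta$) rather than integrated control, but this is precisely what uniform continuity on the compact product spaces delivers.
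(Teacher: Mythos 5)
Your proposal is correct and follows exactly the route the paper intends: its proof of this lemma is literally ``similar to the above lemma,'' i.e.\ the $L^{1}$ argument redone with sup-norm estimates, which is what you carry out (and you usefully make explicit the two points the paper leaves silent, namely that $F(t,z)\in C(\Theta)$ and that uniform continuity of $k,M,C$ on compact products gives control uniformly in $\theta$). No gaps; the Lipschitz constant $\|k\|_{T,\infty}+\|M\|_{T,\infty}+2\|C\|_{T,\infty}R$ matches the paper's, using $\mu(\Theta)=1$ to dominate the $L^{1}$ norm by the sup norm.
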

\begin{proof}
The proof is similar to the above lemma.
\end{proof}
The nonnegativity of density $z(t,\theta)$ is established by the next proposition
(and corollary below).
\begin{proposition}
\label{pr: u>0}
Let $z$ be the solution of \eqref{eq: integral equation} with $z(t_{0})=z_{0}\in C( \Theta )$.
If $z(t_{0})>0$ then $z(t)>0$ for any $t\in [t_{0},t_{0} + T^{*}(t_{0},z_{0}))$.
\end{proposition}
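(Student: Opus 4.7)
The plan is to reduce the integro-ODE to a scalar linear ODE in $t$ for each fixed trait $\theta$ and then apply variation of constants. Introduce the coefficients
\[
a(t,\theta) = k(t,\theta) - \int_{\Theta} C(t,\theta,\vartheta) z(t,\vartheta) d\mu(\vartheta), \qquad b(t,\theta) = \int_{\Theta} M(t,\theta,\vartheta) z(t,\vartheta) d\mu(\vartheta).
\]
Because $z \in C([t_{0}, t_{0} + T^{*}(t_{0},z_{0})), C(\Theta))$ and $k, M, C$ are continuous, both $a$ and $b$ are continuous on $[t_{0}, t_{0} + T^{*}(t_{0},z_{0})) \times \Theta$ and uniformly bounded on every slab $[t_{0}, t_{0} + T] \times \Theta$ with $T < T^{*}(t_{0},z_{0})$. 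The integral equation \eqref{eq: integral equation}, combined with the continuity of $s \mapsto F(s, z(s))$, makes $z$ continuously differentiable as a $C(\Theta)$-valued map, so pointwise it satisfies $\partial_{t} z(t,\theta) = a(t,\theta) z(t,\theta) + b(t,\theta)$.

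Setting $E(t,\theta) = \exp\pa{\int_{t_{0}}^{t} a(r,\theta) \, dr} > 0$, Duhamel's formula yields
\[
z(t,\theta) = z(t_{0},\theta) E(t,\theta) + E(t,\theta) \int_{t_{0}}^{t} \frac{b(s,\theta)}{E(s,\theta)} \, ds.
\]
This identity is the engine of the proof: the first summand is strictly positive initially, and as soon as $b \ge 0$ the second summand is nonnegative. The subtle point is that $b(s,\theta) \ge 0$ requires $z(s,\cdot) \ge 0$ (since $M \ge 0$), which is precisely what we are trying to prove, so the argument is mildly circular.

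I resolve the circularity with a first-exit argument. Define
\[
\tau = \sup\set{t \in [t_{0}, t_{0} + T^{*}(t_{0},z_{0})) : z(s,\theta) > 0 \text{ for all } (s,\theta) \in [t_{0}, t] \times \Theta}.
\]
Since $\Theta$ is compact and $z(t_{0},\cdot) \in C(\Theta)$ is strictly positive, $\min_{\theta \in \Theta} z(t_{0},\theta) > 0$, and continuity of $z$ as a $C(\Theta)$-valued map gives $\tau > t_{0}$. Assume for contradiction that $\tau < t_{0} + T^{*}(t_{0},z_{0})$. By continuity $z(\tau,\cdot) \ge 0$ on $\Theta$, and there exists $\theta^{*} \in \Theta$ with $z(\tau,\theta^{*}) = 0$. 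On $[t_{0},\tau]$ we then have $z \ge 0$, hence $b(\cdot,\theta^{*}) \ge 0$, and evaluating the Duhamel identity at $(\tau,\theta^{*})$ gives $z(\tau,\theta^{*}) \ge z(t_{0},\theta^{*}) E(\tau,\theta^{*}) > 0$, a contradiction. Therefore $\tau = t_{0} + T^{*}(t_{0},z_{0})$, which is the desired strict positivity. The main obstacle, namely the circular nonnegativity/positivity requirement, is overcome thanks to compactness of $\Theta$ and the strict positivity of $E$.
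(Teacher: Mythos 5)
Your proof is correct and follows essentially the same route as the paper: both write the equation for fixed $\theta$ as a linear scalar ODE, apply variation of constants, and run a first-exit-time contradiction at the first point where the minimum of $z$ over the compact set $\Theta$ reaches zero, using $M\ge 0$ to make the Duhamel source term nonnegative up to that time. Your explicit remarks on the apparent circularity and on why $\tau>t_{0}$ (compactness of $\Theta$ plus continuity of $t\mapsto z(t)$ in $C(\Theta)$) only make explicit what the paper leaves implicit.
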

\begin{proof}
Let $0 < T < T^{*}(t_{0},z_{0})$, for any $(t,\theta)\in [t_{0},t_{0} + T]\times\Theta$, we define 
\begin{align*}
g(t,\theta) & = \int_{\Theta}M(t,\theta,\vartheta )z(t,\vartheta) d\mu(\vartheta) \\
a(t,\theta) & =\int_{\Theta}C(t,\theta,\vartheta )z(t,\vartheta) d\mu(\vartheta) 
\end{align*}
then $g(.,\theta),a(.,\theta)$ are continuous, the solution verifies $z(.,\theta)\in C^{1}([t_{0},t_{0} + T^{*}(t_{0},z_{0})))$ and
\begin{align*}
\begin{cases}
\partial_{t} z(t,\theta) = (k(t,\theta)-a(t,\theta))z(t,\theta) + g(t,\theta),\\
z(t_{0},\theta)=z_{0}(\theta).
\end{cases}
\end{align*}
Then
\begin{align}
\label{eq: z(t)}
z(t,\theta) = e^{A(t,t_{0},\theta)}z_{0}(\theta) + \int_{t_{0}}^{t}e^{A(t,t',\theta)}g(t',\theta) dt',
\end{align}
where
\begin{align*}
A(t,t',\theta) = \int_{t'}^{t}k(t'',\theta) - a(t'',\theta) dt'' .
\end{align*}
Let $t_{*} = \sup\{t\in [t_{0},t_{0} + T] : \min\limits_{[t_{0},t]\times\Theta} z(t,\theta)>0\}$.
Suppose $t_{*} < t_{0} +T$, there exists $\theta_{*} \in \Theta $
with $z(\theta_{*},t_{*}) = 0$.
But from \eqref{eq: z(t)}, we have
\begin{align*} z(t_{*},\theta_{*})=e^{A(t_{*},t_{0},\theta_{*})}z_{0}(\theta_{*}) 
+ \int_{t_{0}}^{t_{*}}e^{A(t_{*},t',\theta_{*})}g(t',\theta_{*}) dt'>0, 
\end{align*}
a contradiction. Since $T$ is arbitrary, we obtain the result.
\end{proof}
\begin{corollary}
\label{co: uge0}
Let $z$ be the solution of \eqref{eq: integral equation} with $z(t_{0})=z_{0}\in C( \Theta )$.
If $z_{0} \ge 0$ then $z(t) \ge 0$ for any $t\in [t_{0},t_{0} + T^{*}(t_{0},z_{0}))$.
\end{corollary}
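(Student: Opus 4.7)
The plan is to deduce the nonnegativity statement from the strict positivity result in Proposition \ref{pr: u>0} by a standard approximation argument: perturb the initial datum by a small strictly positive constant, invoke strict positivity, and pass to the limit by continuous dependence on the initial data.

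More precisely, I would fix $0 < T < T^{*}(t_{0},z_{0})$ and, for each $\varepsilon > 0$, define $z_{0}^{\varepsilon} = z_{0} + \varepsilon \mathbf{1}_{\Theta} \in C(\Theta)$, which satisfies $z_{0}^{\varepsilon} > 0$ pointwise on $\Theta$. Let $z^{\varepsilon}$ denote the solution of \eqref{eq: integral equation} with $z^{\varepsilon}(t_{0}) = z_{0}^{\varepsilon}$. Proposition \ref{pr: u>0} then yields $z^{\varepsilon}(t,\theta) > 0$ for every $(t,\theta) \in [t_{0},t_{0}+T^{*}(t_{0},z_{0}^{\varepsilon})) \times \Theta$.

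Next, I would invoke the continuous dependence on initial data for \eqref{eq: integral equation}, which in the $C(\Theta)$ setting follows from the local Lipschitz character of $F$ proven in the preceding lemma (this is the analogue for ODEs in the Banach space $C(\Theta)$ of Proposition \ref{pr: continuous dependence}). In particular, $T^{*}(t_{0},\cdot)$ is lower semi-continuous at $z_{0}$, so for all $\varepsilon$ sufficiently small we have $T < T^{*}(t_{0},z_{0}^{\varepsilon})$, and $z^{\varepsilon} \to z$ uniformly on $[t_{0},t_{0}+T]$ with values in $C(\Theta)$. Hence for every $(t,\theta) \in [t_{0},t_{0}+T] \times \Theta$,
\begin{equation*}
z(t,\theta) = \lim_{\varepsilon \to 0^{+}} z^{\varepsilon}(t,\theta) \ge 0.
\end{equation*}
Since $T < T^{*}(t_{0},z_{0})$ was arbitrary, the conclusion holds on the whole interval $[t_{0},t_{0}+T^{*}(t_{0},z_{0}))$.

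The only mildly delicate point is the use of lower semi-continuity of the maximal existence time to guarantee that $z^{\varepsilon}$ is defined on all of $[t_{0},t_{0}+T]$ for $\varepsilon$ small; this is needed so that the continuous-dependence convergence is meaningful uniformly on that interval. Everything else is essentially a direct application of Proposition \ref{pr: u>0} and standard Cauchy-theory for ODEs with locally Lipschitz right-hand side in a Banach space.
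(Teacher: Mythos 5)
Your argument is correct and is essentially the paper's own proof: the authors perturb the initial datum to $z_{0,n}=z_{0}+1/n>0$, apply Proposition \ref{pr: u>0}, use that $T<T^{*}(t_{0},z_{0,n})$ for $n$ large, and pass to the limit by continuous dependence. Your explicit remark on the lower semi-continuity of the maximal existence time just makes precise a step the paper leaves implicit.
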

\begin{proof}
Consider $z_{0,n} = z_{0}+1/n$, for any $0<T<T^{*}(t_{0},z_{0})$, there exists $n_{0}\in\mathbb{N}$
such that $T<T^{*}(t_{0},z_{0,n})$ if $n\ge n_{0}$. Since $z_{0,n}>0$, using Proposition \ref{pr: u>0}
we have $z_{n}(t)>0$ for $t\in [t_{0},t_{0} + T]$.
As $z_{n}$ converges to $z$ in $C( \Theta \times[t_{0},t_{0} + T])$, we see that $z\ge 0$.
Since $T$ is arbitrary, we obtain the result.
\end{proof}
We now show global well-posedness in $C(\Theta)$ for $z_{0}\ge 0$.
\begin{proposition}
If $z_{0} \in C(\Theta)$ with $z_{0}\ge 0$, then $T^{*}(t_{0},z_{0}) = \infty$.
\end{proposition}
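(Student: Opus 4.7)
The plan is to rule out finite-time blow-up by establishing an a priori uniform bound on $\|z(t)\|_{C(\Theta)}$ on every compact subinterval of $[t_{0},t_{0}+T^{*}(t_{0},z_{0}))$; the blow-up alternative from the local existence theory then forces $T^{*}=\infty$. By Corollary \ref{co: uge0} the solution is nonnegative on its entire maximal interval, so $\|z(t)\|_{C(\Theta)}=\max_{\theta\in\Theta}z(t,\theta)$, and I may freely exploit the sign of $z$ and the positivity of the competition kernel.

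The central step is to control the total mass $N(t)=\int_{\Theta}z(t,\vartheta)\,d\mu(\vartheta)$. Fix $T>0$ with $t_{0}+T<t_{0}+T^{*}$ and integrate the pointwise ODE $\partial_{t}z(t,\theta) = k(t,\theta)z(t,\theta)+g(t,\theta)-a(t,\theta)z(t,\theta)$ from the proof of Proposition \ref{pr: u>0} against $d\mu(\theta)$. Using Fubini on the mutation term and the lower bound $C(t,\theta,\vartheta)\ge c_{-}(T)>0$ on the competition term yields the logistic differential inequality
\[
\dot{N}(t)\le k_{+}(T)\,N(t)-c_{-}(T)\,N(t)^{2}.
\]
Comparison with the autonomous logistic ODE $\dot\phi=k_{+}(T)\phi-c_{-}(T)\phi^{2}$ with the same initial datum then gives the explicit bound $N(t)\le\max\{N(t_{0}),\,k_{+}(T)/c_{-}(T)\}$ for all $t\in[t_{0},t_{0}+T]$.

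It remains to upgrade this mass bound to a sup-norm bound. Since $a(t,\theta)\ge 0$ by positivity of $z$ and $C$, the exponent in the representation \eqref{eq: z(t)} satisfies $A(t,t',\theta)\le \|k\|_{T,\infty}(t-t')$, while the source is dominated by $g(t',\theta)\le \|M\|_{T,\infty}\,N(t')$. Plugging these into \eqref{eq: z(t)} produces
\[
\|z(t)\|_{\infty}\le e^{\|k\|_{T,\infty}T}\|z_{0}\|_{\infty}+T\,e^{\|k\|_{T,\infty}T}\,\|M\|_{T,\infty}\sup_{t'\in[t_{0},t]}N(t'),
\]
which is finite by the previous step. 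Hence $\|z(t)\|_{C(\Theta)}$ cannot blow up on $[t_{0},t_{0}+T]$, and since $T<T^{*}$ is arbitrary, the blow-up alternative forces $T^{*}(t_{0},z_{0})=\infty$.

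The only real obstacle is the logistic step: a crude bound derived from the local Lipschitz constant of $F$ controls $\dot N$ only linearly in $N$ and reproves nothing beyond local existence. One must genuinely exploit the negative quadratic term from competition, together with the strict positivity $c_{-}(T)>0$, to obtain an \emph{autonomous} bound on $N$ valid up to arbitrary $T<T^{*}$. Once this is in place, the remaining estimates are routine bookkeeping with the integral representation already established in Proposition \ref{pr: u>0}.
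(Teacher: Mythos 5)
Your proof is correct, but it takes a different route from the paper's at the key step of bounding the total mass $N(t)=\int_{\Theta}z(t,\vartheta)\,d\mu(\vartheta)$. The paper simply discards the competition term --- since $z\ge 0$ and $C>0$ make its contribution to $\dot N$ nonpositive --- and runs a \emph{linear} Gronwall estimate, obtaining $N(t)\le \exp\bigl((\|k\|_{T,\infty}+\|M\|_{T,\infty})(t-t_{0})\bigr)|z_{0}|_{\infty}$; this grows exponentially but is finite on every compact interval, which is all the blow-up alternative requires. You instead keep the quadratic term, derive the logistic inequality $\dot N\le k_{+}N-c_{-}N^{2}$, and invoke comparison (in effect Lemma \ref{le: w}) to get the uniform-in-time bound $N(t)\le\max\{N(t_{0}),k_{+}/c_{-}\}$. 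This is precisely the computation the paper defers to Proposition \ref{pr: C in Kl}, where it is used to build the invariant family $K(t)$; so your argument is not wrong, it just front-loads the sharper estimate. What you gain is a time-uniform mass bound and hence better constants; what the paper's version buys is that this particular proposition does not actually need the strict lower bound $c_{-}(t)>0$ on the competition kernel, only $C\ge 0$. The remaining step --- feeding $g(t',\theta)\le\|M\|_{T,\infty}\sup N$ and $A(t,t',\theta)\le\|k\|_{T,\infty}(t-t')$ back into the representation \eqref{eq: z(t)} to bound $\|z\|_{T,\infty}$ --- is the same in both arguments. One small bookkeeping remark: the representation \eqref{eq: z(t)} was derived in Proposition \ref{pr: u>0} under the hypothesis $z_{0}>0$, but its derivation uses only continuity of $a$ and $g$ and the ODE structure, so it is legitimate to reuse it for $z_{0}\ge 0$, as both you and the paper do.
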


\begin{proof}
Let $0<T<T^{*}(t_{0},z_{0})$, from Corollary \ref{co: uge0}, we obtain that $a(t,\theta),g(t,\theta)\ge 0$ and then $A(t,t',\theta) \le \|k\|_{T,\infty} (t-t')$. 
Integrating \eqref{eq: z(t)} on $ \Theta $, we get for $t\in [t_{0},t_{0} + T]$
\begin{align*}
\int_{\Theta}z(t,\theta)d\mu(\theta) & \le \exp(\|k\|_{T,\infty} (t-t_{0}))\int_{\Theta} z_{0}(\theta)d\mu(\theta) \\
& + \int_{t_{0}}^{t}\int_{\Theta\times \Theta}\exp(\|k\|_{T,\infty} (t-t'))
M(t',\theta,\vartheta )z(t',\vartheta) d\mu(\vartheta) d\mu(\theta) dt' \\
& \le \exp(\|k\|_{T,\infty} (t-t_{0}))\left(\int_{\Theta} z_{0}(\theta)d\mu(\theta) + \right.\\
& \left. + \|M\|_{T,\infty}\int_{0}^{t}\int_{\Theta}\exp(-\|k\|_{T,\infty} (t'-t_{0}))z(t',\vartheta) d\mu(\vartheta) dt'\right),
\end{align*}
using Gronwall's lemma, we obtain
\begin{align*}
\int_{\Theta}z(t,\theta)d\mu(\theta) & \le 
\exp((\|k\|_{T,\infty} + \|M\|_{T,\infty}) (t-t_{0}))\int_{\Theta} z_{0}(\theta)d\mu(\theta) \\
& \le \exp((\|k\|_{T,\infty} + \|M\|_{T,\infty}) (t-t_{0}))|z_{0}|_{\infty},
\end{align*}
which implies $0 \le g(t,\theta)
\le \|M\|_{T,\infty} \exp((\|k\|_{T,\infty} + \|M\|_{T,\infty}) (t-t_{0}))|z_{0}|_{\infty}$.
From \eqref{eq: z(t)}, we get
\begin{align*}
\|z\|_{T,\infty}\le \exp((\|k\|_{T,\infty} + \|M\|_{T,\infty}) T) |z_{0}|_{\infty}.
\end{align*}
And finally we have that $T^{*}(z_{0}) = \infty$.
\end{proof}
Now, we construct a positive $F$--invariant convex set of $Z$.
\begin{lemma}
\label{le: w}
Let $w\in C^{1}([t_{0},t_{0}+T])$, $w \ge 0$, 
such that $\dot{w}\le k w - c w^{2}$, with $k,c>0$.
If $\lambda\ge k/c$ and $0\le w(t_{0})\le \lambda$,
then $0\le w(t) \le \lambda$ for $t\in [t_{0},t_{0}+T]$.
\end{lemma}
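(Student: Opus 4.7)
The plan is to observe that the hypothesis already supplies nonnegativity of $w$, so the work reduces to establishing the upper bound $w(t)\le\lambda$. The key analytic point is that the quadratic $\psi(w)=kw-cw^{2}=w(k-cw)$ is nonpositive for $w\ge k/c$, and strictly negative for $w>k/c$. Since the hypothesis gives $\lambda\ge k/c$, any value $w>\lambda$ satisfies $w>k/c$ and hence $\psi(w)<0$, which is exactly what is needed to prevent $w$ from escaping past $\lambda$.

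I would proceed by contradiction. Suppose there exists $t^{*}\in(t_{0},t_{0}+T]$ with $w(t^{*})>\lambda$. Using the initial condition $w(t_{0})\le\lambda$ and continuity of $w$, set
\begin{equation*}
t_{1}=\sup\{t\in[t_{0},t^{*}]:w(t)\le\lambda\}.
\end{equation*}
Then $t_{1}\in[t_{0},t^{*})$, $w(t_{1})=\lambda$, and $w(t)>\lambda$ for every $t\in(t_{1},t^{*}]$. For such $t$ the bound $w(t)>\lambda\ge k/c$ gives $k-cw(t)<0$, so the hypothesis $\dot{w}\le kw-cw^{2}=w(k-cw)$ combined with $w\ge 0$ yields $\dot{w}(t)\le 0$ on $(t_{1},t^{*}]$. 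Integrating from $t_{1}$ to $t^{*}$ produces $w(t^{*})\le w(t_{1})=\lambda$, contradicting $w(t^{*})>\lambda$.

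The only subtle point worth watching is the boundary case $\lambda=k/c$: here $\psi(\lambda)=0$, so the strict inequality $\dot w<0$ on the comparison argument is not guaranteed at the endpoint $w=\lambda$ itself, but it still holds strictly on the open interval $(t_{1},t^{*}]$ where $w(t)>\lambda=k/c$, which is all that is required. No refinement (such as perturbing $\lambda$ by $\varepsilon$ and passing to the limit, or a Gronwall-type estimate on $(w-\lambda)_{+}$) is actually needed, although either would work as an alternative. I do not expect any genuine obstacle; the entire argument rests on continuity of $w$ and the sign of $\psi$ past the equilibrium $k/c$.
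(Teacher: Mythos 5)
Your proof is correct and follows essentially the same route as the paper's: both define the last time $t_{1}$ at which $w\le\lambda$ and exploit the sign of $kw-cw^{2}$ for $w>\lambda\ge k/c$ to reach a contradiction. The only cosmetic difference is that you integrate $\dot{w}\le 0$ over $(t_{1},t^{*}]$, whereas the paper invokes the mean value theorem to produce a point with $\dot{w}>0$; the underlying argument is identical.
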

\begin{proof}
Suppose $w(t_{+})>\lambda$ with $t_{0} < t_{+}\le t_{0}+T$, consider 
$t_{-}=\sup\{t\in[t_{0},t_{+}]: w(t)\le \lambda\}$. Using the mean value theorem,
there exists $t_{1} \in (t_{-},t_{+})$ such that
\begin{align*}
w(t_{+})-w(t_{-}) = \dot{w}(t_{1})(t_{+}-t_{-}),
\end{align*}
then $\dot{w}(t_{1})>0$. But $w(t_{1})>\lambda$, which implies $ k w(t_{1}) - c w^{2}(t_{1})<0$, a contradiction.
\end{proof}
\begin{proposition}
\label{pr: C in Kl}
Let $z_{0}\in C( \Theta )$, $z_{0}\ge 0$. If $\lambda(t) \ge \max\{k_{+}(t)/c_{-}(t),|z_{0}|_{Z}\}$, then
the solution of \eqref{eq: integral equation} $z\in C([t_{0},\infty),C(\Theta))$ verifies
$z(t)\ge 0$ and $|z(t)|_{Z}\le\lambda(t)$ for any $t\ge t_{0}$. 
\end{proposition}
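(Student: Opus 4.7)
The plan is to reduce the $Z$-norm bound to a scalar differential inequality and then invoke Lemma \ref{le: w}. The preceding proposition delivers a global solution $z$ in $C(\Theta)$, and Corollary \ref{co: uge0} gives $z(t)\ge 0$ for $t\ge t_0$, so $|z(t)|_Z=\int_{\Theta}z(t,\theta)\,d\mu(\theta)$ and it suffices to bound this scalar quantity by $\lambda(t)$.

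Set $w(t)=\int_{\Theta}z(t,\theta)\,d\mu(\theta)$. As in the proof of Proposition \ref{pr: u>0}, for each fixed $\theta$ the map $t\mapsto z(t,\theta)$ is $C^{1}$ and satisfies the pointwise ODE coming from $F$. Integrating that ODE against $\mu$ over $\Theta$ and applying Fubini to the mutation term, so that $M(t,\theta,\vartheta)$ is regrouped into $k(t,\theta)+\int_{\Theta}M(t,\vartheta,\theta)\,d\mu(\vartheta)$, I obtain
\[
\dot{w}(t)=\int_{\Theta}\!\left(k(t,\theta)+\int_{\Theta}M(t,\vartheta,\theta)\,d\mu(\vartheta)\right)\!z(t,\theta)\,d\mu(\theta) -\int_{\Theta\times\Theta}\!C(t,\theta,\vartheta)\,z(t,\theta)z(t,\vartheta)\,d\mu(\theta)\,d\mu(\vartheta).
\]
Bounding the first term above by $k_{+}(t)w(t)$ using \eqref{eq: k}, and the double integral below by $c_{-}(t)w(t)^{2}$ using \eqref{eq: c} together with $z\ge 0$, yields the scalar inequality $\dot{w}(t)\le k_{+}(t)w(t)-c_{-}(t)w(t)^{2}$.

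To feed this into Lemma \ref{le: w}, which requires constant coefficients, I fix an arbitrary $T>t_0$ and exploit the monotonicity built into the definitions: $k_{+}$ is non-decreasing and $c_{-}$ is non-increasing in $t$, so on $[t_0,T]$ the inequality strengthens to $\dot{w}\le k_{+}(T)\,w-c_{-}(T)\,w^{2}$ with $c_{-}(T)>0$. Assuming $k_{+}(T)>0$ (in the opposite case the inequality gives $\dot{w}\le 0$ directly, hence $w(t)\le w(t_0)\le\lambda(T)$), the hypotheses $\lambda(T)\ge k_{+}(T)/c_{-}(T)$ and $w(t_0)=|z_0|_Z\le\lambda(T)$ put me in the setting of Lemma \ref{le: w}, which yields $w(t)\le\lambda(T)$ on $[t_0,T]$. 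Evaluating at $t=T$ and letting $T$ range over $(t_0,\infty)$ gives $|z(t)|_Z\le\lambda(t)$ for every $t\ge t_0$.

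The main obstacle is the algebraic bookkeeping in the second step: the Fubini swap has to land the mutation kernel in precisely the combination $k+\int M(\vartheta,\theta)\,d\mu(\vartheta)$ that defines $k_{+}$, and the competition double integral must be bounded below by $c_{-}(t)w^{2}$ sharply enough that the resulting scalar ODE fits the logistic template of Lemma \ref{le: w}. Everything else (positivity of $z$, global existence in $C(\Theta)$, and passage from the time-dependent coefficients to constants) is supplied either by the results already established or by the monotonicity of $k_{+}$ and $c_{-}$.
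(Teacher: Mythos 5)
Your proposal is correct and follows essentially the same route as the paper: positivity via Corollary \ref{co: uge0}, the Fubini regrouping of the mutation kernel into $k(t,\theta)+\int_{\Theta}M(t,\vartheta,\theta)\,d\mu(\vartheta)$, the resulting logistic inequality $\dot{w}\le k_{+}w-c_{-}w^{2}$, and Lemma \ref{le: w} applied on $[t_{0},T]$ with the frozen constants $k_{+}(T),c_{-}(T)$ (which is exactly how the paper uses the definitions \eqref{eq: kc}). Your explicit handling of the case $k_{+}(T)\le 0$ is a small refinement the paper omits, but it does not change the argument.
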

\begin{proof}
From Corollary \ref{co: uge0}, we can see that $z(t) \ge 0$.
Let $t>0$, for any $t'\in[t_{0},t]$ we have
\begin{align*}
\frac{d}{dt}\int_{\Theta}z(t',\theta)d\mu(\theta) & = \int_{\Theta}k(t',\theta)z(t',\theta)d\mu(\theta)
+ \int_{\Theta\times \Theta}M(t',\theta,\vartheta )z(t',\vartheta) d\mu(\vartheta) d\mu(\theta) \\
& - \int_{\Theta\times \Theta}C(t',\theta,\vartheta )z(t',\vartheta)z(t',\theta) d\mu(\vartheta) d\mu(\theta)\\
& = \int_{\Theta}\left(k(t',\theta) + \int_{\Theta} M(t',\vartheta ,\theta) d\mu(\vartheta) \right) z(t',\theta) d\mu(\theta) \\
& - \int_{\Theta\times \Theta}C(t',\theta,\vartheta )z(t',\vartheta)z(t',\theta) d\mu(\vartheta) d\mu(\theta).
\end{align*}
From \eqref{eq: kc}, we have
\begin{align*}
\frac{d}{dt}\int_{\Theta}z(t',\theta)d\mu(\theta) & \le k_{+}(t) \int_{\Theta}z(t',\theta)d\mu(\theta) 
- c_{-}(t) \left( \int_{\Theta}z(t',\theta)d\mu(\theta) \right)^{2}.
\end{align*}
Using Lemma \ref{le: w}, we obatin $|z(t)|_{Z}\le \lambda(t)$.
\end{proof}
\begin{proposition}
\label{pr: Kl}
Let $\lambda\in C\pa{\mathbb{R}_{+}}$ be an increasing function such that $\lambda(t)\ge k_{+}(t)/c_{-}(t)$.
Then, the family of bounded convex closed set $\{K(t)\}_{t\in\mathbb{R}_{+}}$ given by
$K\pa{t} = \{z\in Z: z\ge 0 \text{ a.e.}, |z|_{Z}\le\lambda(t)\}$ is increasing and positive $F$-invariant.
\end{proposition}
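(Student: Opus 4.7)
The plan is to dispatch the two structural claims quickly and then prove the positive $F$-invariance in two stages: first for continuous initial data, where Proposition \ref{pr: C in Kl} applies directly, and then for general $z_{0}\in Z$ by a density argument combined with continuous dependence on initial data.

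The structural parts are immediate. Each $K(t)$ is the intersection of the closed convex cone $\{z\ge 0\text{ a.e.}\}$ with the closed ball of radius $\lambda(t)$, hence is closed, convex, and bounded by $\lambda(t)$. Monotonicity of the family follows because $\lambda$ is increasing: if $z\in K(t')$ and $t'\le t$, then $|z|_{Z}\le\lambda(t')\le\lambda(t)$. For invariance, fix $t_{0}\ge 0$ and first take $z_{0}\in C(\Theta)\cap K(t_{0})$. Since $z_{0}\ge 0$ and $\lambda(t)\ge\max\{k_{+}(t)/c_{-}(t),|z_{0}|_{Z}\}$ for $t\ge t_{0}$ (using $\lambda$ increasing to dominate $|z_{0}|_{Z}\le\lambda(t_{0})$), Proposition \ref{pr: C in Kl} together with Corollary \ref{co: uge0} gives $z(t)\ge 0$ and $|z(t)|_{Z}\le\lambda(t)$ for all $t\ge t_{0}$, so $z(t)\in K(t)$.

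For a general $z_{0}\in K(t_{0})\subset Z$, I approximate by continuous functions. Using density of $C(\Theta)$ in $L^{1}(\Theta,\mathcal{B},\mu)$, pick $\phi_{n}\in C(\Theta)$ with $\phi_{n}\to z_{0}$ in $Z$ and set $\phi_{n}^{+}=\max(\phi_{n},0)\in C(\Theta)$; nonnegativity of $z_{0}$ gives $|\phi_{n}^{+}-z_{0}|_{Z}\le|\phi_{n}-z_{0}|_{Z}\to 0$. Let $z_{n}$ be the solution of \eqref{eq: integral equation} with initial datum $\phi_{n}^{+}$. Introduce the auxiliary bound $\lambda_{n}(t)=\max\{\lambda(t),|\phi_{n}^{+}|_{Z}\}$, which is still increasing and still satisfies $\lambda_{n}(t)\ge k_{+}(t)/c_{-}(t)$; the continuous case applied to $\lambda_{n}$ yields $z_{n}(t)\ge 0$ and $|z_{n}(t)|_{Z}\le\lambda_{n}(t)$ globally in time. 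Because $F$ is continuous and locally Lipschitz in the second variable on $Z$, standard continuous dependence for the ODE in $Z$ gives $z_{n}(t)\to z(t)$ in $Z$, uniformly on every compact subinterval of $[t_{0},t_{0}+T^{*}(t_{0},z_{0}))$. Since $|\phi_{n}^{+}|_{Z}\to|z_{0}|_{Z}\le\lambda(t_{0})\le\lambda(t)$, one has $\lambda_{n}(t)\to\lambda(t)$. The $L^{1}$-limit of a.e.\ nonnegative functions is a.e.\ nonnegative, so passing to the limit yields $z(t)\ge 0$ and $|z(t)|_{Z}\le\lambda(t)$, i.e.\ $z(t)\in K(t)$; boundedness of $K(t)$ on compact $t$-intervals then rules out blow-up, forcing $T^{*}(t_{0},z_{0})=\infty$.

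The main obstacle is the density extension from $C(\Theta)$ to $L^{1}(\Theta)$: one must simultaneously preserve a.e.\ nonnegativity of the approximants (handled by the truncation $\phi_{n}^{+}$, which does not increase the $L^{1}$-distance to $z_{0}$) and control their $L^{1}$-norms in a way compatible with the a priori bound $\lambda(t_{0})$ (handled by the auxiliary family $\lambda_{n}$, which relaxes the bound at level $n$ and collapses to $\lambda$ in the limit). Continuous dependence on initial data then transports both properties through the limit.
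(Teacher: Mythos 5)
Your proof is correct and follows essentially the same route as the paper's: approximate $z_{0}$ by nonnegative continuous data, invoke Proposition \ref{pr: C in Kl} for the continuous case, and pass to the limit via continuous dependence and closedness of $K(t)$. The only difference is that you explicitly construct the approximants (positive-part truncation plus the auxiliary bounds $\lambda_{n}$), a detail the paper's proof leaves implicit by simply asserting a sequence in $C(\Theta)\cap K(t_{0})$.
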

\begin{proof}
Let $z_{0}\in K(t_{0})$, taking $\{z_{0,n}\}_{n\in\mathbb{N}}\subset C( \Theta )\cap K(t_{0})$ 
such that $|z_{0} - z_{0,n}|_{Z} \to 0$,
from Proposition \ref{pr: C in Kl} we can see that $T^{*}(z_{0,n})=\infty$ and $z_{n}(t)\in K(t)$, for $t\ge t_{0}$.
Using continuous dependence on initial data, 
we can see that $|z(t) - z_{n}(t)|_{Z} \to 0$ for any $t\in[t_{0},t_{0} + T^{*}(t_{0},z_{0}))$,
since $K(t)$ is closed, we obtain $z(t)\in K(t)$.
\end{proof}
\begin{remark}
Also, the familiy $\{K(t)\}_{t\in\mathbb{R}_{+}}$ is positive $2F$-invariant.
\end{remark}

\begin{theorem}
Let $u_{0}\in C_{\rm u}(\mathbb{R}^{d},Z)$, with $u_{0}(x)\ge 0$ a.e. in $\Theta$, the mild solution of equation 
\eqref{eq: in L1(X)} is globally well-posed and verifies $\|u(t)\|_{\infty,Z}\le\max\{\|u_{0}\|_{\infty,Z}, k_{+}(t)/c_{-}(t)\}$.
\end{theorem}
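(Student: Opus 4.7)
The plan is to construct an invariant family $\{K(t)\}_{t\ge 0}$ of closed, convex, bounded subsets of $Z = L^{1}(\Theta,\mathcal B,\mu)$ tailored to the initial data, and then invoke Theorem \ref{th: global existence} directly. Set
\begin{equation*}
\lambda(t) \;=\; \max\bigl\{\,\|u_{0}\|_{\infty,Z},\; k_{+}(t)/c_{-}(t)\,\bigr\},
\end{equation*}
and define $K(t) = \{\,z\in Z : z\ge 0\text{ a.e.},\ |z|_{Z}\le\lambda(t)\,\}$. First I would verify that $\lambda$ is a continuous, nondecreasing function on $\mathbb{R}_{+}$: $k_{+}$ is nondecreasing (it is a maximum over the expanding family $[0,t]\times\Theta$) and continuous thanks to the continuity of $k$ and $M$ on the compact set, while $c_{-}$ is positive, continuous and nonincreasing by the same reasoning; taking the pointwise maximum with a constant preserves these properties.

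Next, since $\lambda(t)\ge k_{+}(t)/c_{-}(t)$ by construction, Proposition \ref{pr: Kl} applies and yields that $\{K(t)\}_{t\in\mathbb{R}_{+}}$ is an increasing family of bounded, closed, convex sets in $Z$ which is positively $F$-invariant (and $2F$-invariant by the subsequent remark). In particular, the hypotheses of Corollary \ref{co: Nhh2} are satisfied, and $M(T)=\sup\{|z|_{Z}:z\in K(t),\ t\in[0,T]\}=\lambda(T)<\infty$ for every $T>0$, as required by Theorem \ref{th: global existence}.

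It remains to check that the initial datum lies in the right space. For each $x\in\mathbb{R}^{d}$ we have $u_{0}(x)\ge 0$ a.e.\ in $\Theta$ and $|u_{0}(x)|_{Z}\le \|u_{0}\|_{\infty,Z}\le\lambda(0)$, so $u_{0}\in C_{\rm u}(\mathbb{R}^{d},K(0))$. Theorem \ref{th: global existence} then gives $T^{*}(u_{0})=\infty$ and $u(t)\in C_{\rm u}(\mathbb{R}^{d},K(t))$ for every $t>0$. Unpacking membership in $K(t)$ immediately yields the bound
\begin{equation*}
\|u(t)\|_{\infty,Z}\;\le\;\lambda(t)\;=\;\max\bigl\{\|u_{0}\|_{\infty,Z},\,k_{+}(t)/c_{-}(t)\bigr\},
\end{equation*}
which is the asserted estimate.

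There is no real obstacle: all of the work has been done in the preceding propositions. The only point that requires a line of care is the monotonicity and continuity of the profile $\lambda(t)$, which is needed so that $\{K(t)\}$ is genuinely an \emph{increasing} family (invoked to cover the non-autonomous invariance from Corollary \ref{co: Nhh2}) and so that the uniform bound $M(T)<\infty$ holds on every compact time interval. Once these monotonicity facts about $k_{+}$ and $c_{-}$ are noted, the result is a direct specialization of Theorem \ref{th: global existence} to the population-dynamics nonlinearity on $Z=L^{1}(\Theta,\mathcal B,\mu)$.
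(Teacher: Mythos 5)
Your proposal is correct and follows exactly the paper's own argument: the paper also proves this theorem as an immediate consequence of Theorem \ref{th: global existence} and Proposition \ref{pr: Kl} with the choice $\lambda(t)=\max\{\|u_{0}\|_{\infty,Z},\,k_{+}(t)/c_{-}(t)\}$. The only difference is that you spell out the monotonicity and continuity of $\lambda$, which the paper leaves implicit.
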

\begin{proof}
The result is a immediate consequence of Theorem \ref{th: global existence} and Proposition \ref{pr: Kl}
taking $\lambda(t) = \max\{\|u_{0}\|_{\infty,Z}, k_{+}(t)/c_{-}(t)\}$.
\end{proof}
\subsection{Global existence for products of Banach Spaces}
We generalize the previous results by proving global existence for products of Banach Spaces. Lemma \ref{le: convex product st} proves that the semigroup operator maintains the solution inside the invariant region. Following that, Theorem \ref{th: global existence2} proves that if $u_0$ is inside the invariant region, then $u(t)$ remains in it for all $t>0$.
Let $\{Z_{j}\}_{1\le j\le m}$ be Banach spaces and
$Z=Z_{1}\times \cdots \times Z_{m}$ with the usual norm,
we denote $\pi_{j}:Z\to Z_{j}$ the projection map. 
If $\sigma_{j}>0$, $0<\beta_{j}\le 1$, and ${\sf S}_{j}(t)u=G_{\sigma_{j},\beta{j}}(.,t)*u$
for $u\in C_{u}(\mathbb{R}^{d},Z_{j})$, then ${\sf S}:\mathbb{R}_{+}\to \mathcal{B}(C_{u}(\mathbb{R}^{d},Z))$ given by
\begin{align*}
{\sf S}(t)u=({\sf S}_{1}(t)\pi_{1}u,\ldots,{\sf S}_{m}(t)\pi_{m}u)
\end{align*}
is a continuous contraction semigroup.
\begin{lemma}
\label{le: convex product st}
Let $K_{j}\subset Z_{j}$ be a closed convex set and $K = K_{1}\times\cdots\times K_{m}\subset Z$.
If $u\in C_{u}(\mathbb{R}^{d},K)$, then ${\sf S}(t)u \in C_{u}(\mathbb{R}^{d},K)$, for any $t>0$.
\end{lemma}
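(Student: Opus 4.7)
The plan is to reduce the product statement to the single-space case already handled by Lemma \ref{le: G*u}, exploiting the fact that ${\sf S}(t)$ was defined to act coordinate-wise through the projections $\pi_j$. Since $K = K_1 \times \cdots \times K_m$ is precisely the set of tuples $(z_1,\ldots,z_m)$ with $z_j \in K_j$, membership in $K$ can be checked componentwise, which matches the structure of ${\sf S}(t)$.

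First, I would observe that if $u \in C_{\rm u}(\mathbb{R}^d, K)$, then for each $j$ the projection $\pi_j u \in C_{\rm u}(\mathbb{R}^d, K_j)$, because $\pi_j : Z \to Z_j$ is a continuous linear map and $\pi_j u(x) \in K_j$ by the assumption $u(x) \in K$. Next, for each $j$, the set $K_j \subset Z_j$ is a closed convex set by hypothesis. Applying Lemma \ref{le: G*u} to the semigroup generated by $-\sigma_j(-\Delta)^{\beta_j}$ acting on $C_{\rm u}(\mathbb{R}^d, Z_j)$, with the closed convex set $K_j$ and the input $\pi_j u$, yields
\begin{equation*}
{\sf S}_j(t)(\pi_j u) \in C_{\rm u}(\mathbb{R}^d, K_j)
\quad\text{for every } t>0.
\end{equation*}

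Finally, since
\begin{equation*}
{\sf S}(t)u(x) = \bigl({\sf S}_1(t)\pi_1 u(x), \ldots, {\sf S}_m(t)\pi_m u(x)\bigr),
\end{equation*}
each component lies in $K_j$, so ${\sf S}(t)u(x) \in K_1 \times \cdots \times K_m = K$ for every $x \in \mathbb{R}^d$ and $t>0$. Uniform continuity and boundedness of ${\sf S}(t)u$ on $\mathbb{R}^d$ follow from the corresponding property of each component ${\sf S}_j(t)\pi_j u$, combined with the definition of the product norm. This concludes the argument.

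There is essentially no obstacle: the work has already been done in Lemma \ref{le: G*u}, and the only thing to verify is that the product structure of ${\sf S}(t)$ and of $K$ are compatible, which is straightforward. The mildly delicate point, if any, is simply to note that continuity of the projections $\pi_j$ (and their being norm-one on the product) ensures that $\pi_j u$ inherits uniform continuity and boundedness from $u$, so that Lemma \ref{le: G*u} applies in each coordinate.
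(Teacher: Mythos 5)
Your proof is correct and follows the same route as the paper, which simply notes that the result is a consequence of the coordinate-wise definition of ${\sf S}(t)$ together with Lemma \ref{le: G*u}; you have merely spelled out the details of that reduction. Nothing further is needed.
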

\begin{proof}
The proof is a consequence of the definition above and lemma \ref{le: G*u}.
\end{proof}
\begin{theorem}
\label{th: global existence2}
Let $K_{j}(t)\subset Z_{j}$ be bounded closed convex sets,
if $K(t) = K_{1}(t)\times\cdots\times K_{m}(t)$ is a positively $F$--invariant set, 
then for any $u_{0}\in C_{u}(\mathbb{R}^{d},K(0))$,
it holds $T^{*}(u_{0})=\infty$ and $u(t)\in C_{u}(\mathbb{R}^{d},K(t))$ for $t>0$.
\end{theorem}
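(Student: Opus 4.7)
The plan is to adapt verbatim the proof strategy of Theorem \ref{th: global existence}, with the only change being that the single-coordinate linear-step invariance Lemma \ref{le: G*u} is replaced by its product version, Lemma \ref{le: convex product st}. The semigroup acts component-wise on the product, so convexity and closedness of each factor $K_j(t)$ are enough to keep the output of each linear step inside $C_u(\mathbb{R}^d, K(kh))$; the rest of the machinery (Lie--Trotter iterates, Proposition \ref{pr: pasospropagador}, Theorem \ref{th: convergencia}, Corollary \ref{co: Nhh2}, and the blow-up alternative of Theorem \ref{th: local existence}) carries over without modification since it is formulated on an abstract Banach space $Z$.

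Concretely, I would fix $u_0 \in C_u(\mathbb{R}^d, K(0))$ and $t \in (0, T^*(u_0))$, set $h = t/n$ for each $n \in \mathbb{N}$, and build the iterates $U_{h,0} = u_0$,
\begin{align*}
V_{h,k+1} & = {\sf S}(h) U_{h,k}, \\
U_{h,k+1} & = {\sf N}(kh+h, kh+h/2, V_{h,k+1}), \quad k = 0, \ldots, n-1.
\end{align*}
I then prove by induction on $k$ that $U_{h,k} \in C_u(\mathbb{R}^d, K(kh))$: the linear step is handled by Lemma \ref{le: convex product st}, which places $V_{h,k+1}$ in $C_u(\mathbb{R}^d, K(kh))$ using convexity and closedness of each $K_j(kh)$; the nonlinear step is handled by Corollary \ref{co: Nhh2}, which carries $V_{h,k+1}$ to $U_{h,k+1} \in C_u(\mathbb{R}^d, K((k+1)h))$ using the positive $F$-invariance of $\{K(t)\}$.

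Next, Proposition \ref{pr: pasospropagador} identifies $U_{h,n}$ with the evaluation at time $t$ of the splitting-equation solution $u_h$, and Theorem \ref{th: convergencia} gives $\|u(t) - U_{h,n}\|_{\infty, Z} \to 0$ as $n \to \infty$. Since $K(t)$ is closed in $Z$ and every $U_{h,n}$ lies in $C_u(\mathbb{R}^d, K(t))$, the limit $u(t)$ does too. Finally, to rule out finite-time blow-up, boundedness of the $K_j(t)$ on compact time intervals yields $M(T) = \sup\{|z|_Z : z \in K(t), t \in [0,T]\} < \infty$; combined with the bound $u(t) \in K(t)$ and the blow-up alternative in Theorem \ref{th: local existence}, this forces $T^*(u_0) = \infty$.

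I do not anticipate any real obstacle: the argument is a direct transcription of the proof of Theorem \ref{th: global existence}, and the only genuinely new input (the product structure of the linear step) is already packaged in Lemma \ref{le: convex product st}, whose own proof reduces coordinate-by-coordinate to the Hahn--Banach argument of Lemma \ref{le: G*u}. The one technical point worth flagging is that, exactly as in Theorem \ref{th: global existence}, the boundedness hypothesis on the $K_j(t)$ must hold uniformly on each bounded time interval in order for the final non-blow-up step to go through.
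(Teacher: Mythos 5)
Your proposal is correct and follows essentially the same route as the paper: the same Lie--Trotter induction with Lemma \ref{le: convex product st} replacing Lemma \ref{le: G*u} for the linear half-step, convergence via Theorem \ref{th: convergencia}, closedness of $K(t)$ to pass to the limit, and boundedness to exclude blow-up. If anything, your explicit appeal to Corollary \ref{co: Nhh2} for the nonlinear half-step (which is the flow of $2F$ over a half-interval, not of $F$) is slightly more careful than the paper's wording, which invokes positive $F$--invariance directly.
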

\begin{proof}
	
Let $u_{0}\in C_{u}(\mathbb{R}^{d},K(0))$ and $T^{*}(u_{0})$ 
maximal time of existence of the solution $u$ of \eqref{eq: mild solution}. 
Let $t\in (0,T^{*}(u_{0}))$, $h = t/n$, $n\in\mathbb{N}$, $\{V_{h,k}\}_{1\le k\le n}$ and $\{U_{h,k}\}_{0\leq k\leq n}$, as defined in Proposition \ref{pr: u in CK}.
Suppose that $U_{h,k}\in C_{u}(\mathbb{R}^{d},K(kh))$. Lemma \ref{le: convex product st} implies that
$V_{h,k+1}\in C_{u}(\mathbb{R}^{d},K(kh))$.
Using that $K(t) = K_{1}(t)\times\cdots\times K_{m}(t)$ is positively $F$--invariant, $U_{h,k+1}\in C_{u}(\mathbb{R}^{d},K((k+1)h))$.
Using the same reasoning as in Proposition \ref{pr: u in CK}, we have that $U_{h,k}\to u(kh)$ in $C_{u}(\mathbb{R}^{d},Z)$ when $n\to\infty$ and $u(t)\in C_{u}(\mathbb{R}^{d},K(t))$.
Since $K(t)$ is bounded, we obtain the result.
\end{proof}
\begin{example}
We expose an example, where we construct an invariant convex set that consists of a product of intervals, in which we can apply the above results. In \cite{Asgari2011} a FHN Model for pattern formation is presented:
\begin{equation}
\label{eq: FHN}
\begin{cases}
\partial_{t} u=\sigma_{u}\Delta u+(a-u)(u-1)u-v \\
\partial_{t} v=\sigma_{v}\Delta v+e(bu-v) 
\end{cases}
\end{equation}
with $0<a<1$, $e>0$ and $b\ge 0$.
A similar example is analyzed in \cite{Smoller1983}.
To apply Theorem \ref{th: global existence2}, we need to find positive $F$-invariant rectangle
$K=K_{1}\times K_{2}$, $K_{j}=[-R_{j},R_{j}]$, where $F$ is given by 
\begin{equation*}
F(u,v)=(au^{2}-u^{3}-au+u^{2}-v,e(bu-v)).
\end{equation*}
Let $R_{1}>\max\{4,\sqrt{2b}\}$ and $2bR_{1} < 2R_{2} < R_{1}^{3}$,
we can see that the rectangle with $R_{1}$ and $R_{2}$ is $F$-invariant:
\begin{align*}
F_{1}(R_{1},v) & \le a(R_{1}^{2}-R_{1})-R_{1}^{3}+R_{1}^{2}+|v|
\le a(R_{1}^{2}-R_{1})-R_{1}^{3}+R_{1}^{2}+R_{2} < 0, \\
F_{1}(-R_{1},v) & \ge a(R_{1}^{2}+R_{1})+R_{1}^{3}+R_{1}^{2}-|v| 
\ge a(R_{1}^{2}+R_{1})+R_{1}^{3}+R_{1}^{2}-R_{2} >0, \\
F_{2}(u,R_{2}) & \le e(b|u|-R_{2}) \le e(bR_{1}-R_{2})<0, \\
F_{2}(u,-R_{2}) & \ge e(-b|u|+R_{2}) \ge e(-bR_{1}+R_{2})>0.
\end{align*}
Then the field evaluated at the border of $K$ points inward. 
By Theorem \ref{th: global existence2} the equation \eqref{eq: FHN} is globally well posed.
\end{example}
\section{Asymptotic behavior}
We analyze the situation in which, if $u_0$ has a horizontal asymptote at $z_0$ then using the same methods as before, 
we prove that $u(t)$ approaches asymptotically to the time evolution of $z_0$. We consider the 1-dimensional real case. 
We first show in Lemma \ref{le: lim Su} that if $u_0$ has a horizontal asymptote at $z_0$ then ${\sf S}(t)u_0$ 
remains with the same horizontal asymptote. Next, we prove in Lemma \ref{le: lim Nu} that $\mathsf{N}(t,t_0,u_{0})(x)$ 
has a time dependent horizontal asymptote, which is the solution of the equation \eqref{eq: integral equation} with $z_0$ as an initial condition. 
Finally, we combine both results and a continuous dependence argument in Lemma \ref{le: lim un} 
to achieve Proposition \ref{pr: lim u}, the solution $u(t)$ of \eqref{eq: reaction-diffusion} has the same time dependent 
horizontal asymptote $z(t)$.

These results can be applied, for example, to the Fisher-Kolmogorov equation.
Specifically, in \cite{Kolmogorov1989} solutions with the mentioned asymptotic behavior are analyzed.
\\
\begin{proposition}
	\label{pr: lim u}
	Let $u_{0}\in C_{\rm u}(\mathbb{R},Z)$ such that
	$\lim_{x\to\pm\infty}u_{0}(x)=z_{0}^{\pm}\in Z$, if $u(t)$ is the solution
	of \eqref{eq: mild solution} with $F$ autonomous, then $\lim_{x\to\pm\infty}u(t,x)=z^{\pm}(t)$, where
	$z^{\pm}$ is the solution of \eqref{eq: integral equation} with $z^{\pm}(0) = z_{0}^{\pm}$.
\end{proposition}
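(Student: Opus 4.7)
The plan is to exploit the Lie--Trotter splitting structure developed in Section \ref{section4}: if both the linear step ${\sf S}(h)$ and the non-linear step $\mathsf{N}$ preserve the existence of a horizontal asymptote and evolve that asymptote consistently with the ODE \eqref{eq: integral equation}, then the splitting approximants $u_{h}$ inherit this property at every time step, and Theorem \ref{th: convergencia} lets me pass to the limit $h\to 0^{+}$ to obtain the statement for $u(t)$. The autonomy hypothesis on $F$ is essential here, because it is what ensures that the nonlinear step of the splitting scheme applied to the constant asymptote coincides with a single step of the ODE producing $z^{\pm}$.

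The first ingredient is the claim that if $v\in C_{\rm u}(\mathbb{R},Z)$ satisfies $\lim_{x\to\pm\infty}v(x)=w^{\pm}$, then $\lim_{x\to\pm\infty}({\sf S}(t)v)(x)=w^{\pm}$ for every $t>0$; that is, the linear semigroup leaves the asymptote \emph{unchanged}. This is a routine convolution estimate: writing
\begin{align*}
({\sf S}(t)v)(x)-w^{\pm} = \int_{\mathbb{R}}G_{\sigma,\beta}(y,t)\bigl(v(x-y)-w^{\pm}\bigr)\,dy,
\end{align*}
I would split the integral over $|y|\le R$ and $|y|>R$, use $\|v\|_{\infty,Z}<\infty$ together with the integrability of $G_{\sigma,\beta}(\cdot,t)$ to make the tail part smaller than any given $\varepsilon$ by choosing $R$ large, and then use the pointwise convergence $v(x-y)\to w^{\pm}$ as $x\to\pm\infty$ for fixed $|y|\le R$, combined with dominated convergence, to control the bounded part.

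The second ingredient is that the nonlinear flow moves the asymptote in step with its own $Z$-valued ODE: for $v\in C_{\rm u}(\mathbb{R},Z)$ with $\lim_{x\to\pm\infty}v(x)=w^{\pm}$ and $t$ in a compact subinterval of the interval of existence,
\begin{align*}
\lim_{x\to\pm\infty}\mathsf{N}(t,t_{0},v)(x) = \mathsf{N}(t,t_{0},w^{\pm}),
\end{align*}
where the right-hand side is the $Z$-valued ODE solution. Since $F$ acts pointwise on $C_{\rm u}(\mathbb{R},Z)$, $\mathsf{N}(t,t_{0},v)(x)$ is simply the $Z$-valued ODE solution with initial datum $v(x)$, so the statement reduces to continuous dependence on the initial condition for the Cauchy problem \eqref{eq: integral equation}, applied to $v(x)$, which lies in an arbitrarily small neighbourhood of $w^{\pm}$ once $|x|$ is large enough.

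Combining the two ingredients inductively along the splitting scheme of Proposition \ref{pr: pasospropagador}, the approximant $U_{h,k}(x)$ satisfies $\lim_{x\to\pm\infty}U_{h,k}(x)=Z_{h,k}^{\pm}$, where $\{Z_{h,k}^{\pm}\}_{k}$ is exactly the Lie--Trotter sequence for \eqref{eq: integral equation} starting from $w^{\pm}$: the linear half-step leaves the asymptote invariant, while the non-linear half-step applies $\mathsf{N}$, and by autonomy of $F$ this matches the splitting scheme producing $z^{\pm}(t)$. An $\varepsilon/3$ argument then concludes: pick $h$ small enough that $\|u(t)-u_{h}(t)\|_{\infty,Z}<\varepsilon/3$ by Theorem \ref{th: convergencia} and $|Z_{h,n}^{\pm}-z^{\pm}(t)|_{Z}<\varepsilon/3$ by the $Z$-valued version of the same convergence result; then let $|x|\to\infty$ so that $|U_{h,n}(x)-Z_{h,n}^{\pm}|_{Z}<\varepsilon/3$. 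I expect the main obstacle to be making the second ingredient \emph{quantitative and uniform in the number of splitting steps}, so that after fixing $h$ one can still let $x\to\pm\infty$ across $n=t/h$ successive nonlinear half-steps without the error blowing up; this will be handled by a Gronwall bound on the time interval $[0,t]$ rather than by iterating a one-step estimate.
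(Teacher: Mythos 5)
Your proposal is correct and follows essentially the same route as the paper: the paper's Lemmas \ref{le: lim Su}, \ref{le: lim Nu} and \ref{le: lim un} are precisely your two ingredients (convolution tail estimate for ${\sf S}(t)$, continuous dependence for the pointwise ODE flow) plus the uniform-limit step, assembled by induction along the Lie--Trotter sequence \eqref{eq: L-T} and Theorem \ref{th: convergencia}. The one place you overcomplicate is the final $\varepsilon/3$ bookkeeping: since constants are fixed points of ${\sf S}(t)$ and the nonlinear half-step solves the ODE exactly (the flow of $2F$ over time $h/2$ equals that of the autonomous $F$ over time $h$), your sequence $Z^{\pm}_{h,k}$ is exactly $z^{\pm}(kh)$ with no splitting error, so no Gronwall bound or uniformity-in-$n$ argument is needed --- for each fixed $h$ the approximant $U_{h,n}$ has the exact asymptote $z^{\pm}(t)$, and uniform convergence $U_{h,n}\to u(t)$ finishes the proof.
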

\begin{lemma}
	\label{le: lim Su}
	Let $u_{0}\in C_{\rm u}(\mathbb{R},Z)$ such that
	$\lim_{x\to\pm\infty}u_{0}(x)=z_{0}^{\pm}\in Z$. If $u(t)={\sf S}(t)u_{0}$,
	then $\lim_{x\to\pm\infty}u(t,x)=z_{0}^{\pm}$.
\end{lemma}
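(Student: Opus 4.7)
The plan is to use the convolution representation ${\sf S}(t) u_0 = G_{\sigma,\beta}(\cdot, t) * u_0$ together with the fact that $G_{\sigma,\beta}(\cdot, t)$ is a probability density on $\mathbb{R}$ (so in particular integrable with total mass $1$). Treating the two signs symmetrically, I focus on $x \to +\infty$; the case $x \to -\infty$ is identical. Because $\int_{\mathbb{R}} G_{\sigma,\beta}(y,t)\, dy = 1$, I can rewrite
\begin{align*}
u(t,x) - z_0^{+} = \int_{\mathbb{R}} G_{\sigma,\beta}(y,t) \bigl( u_0(x-y) - z_0^{+} \bigr)\, dy,
\end{align*}
and the goal reduces to showing this vector-valued Bochner integral has $Z$-norm tending to $0$ as $x \to +\infty$.

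The key step is a standard $\varepsilon/2$ splitting argument. Fix $t > 0$ and $\varepsilon > 0$. First, since $G_{\sigma,\beta}(\cdot, t) \in L^{1}(\mathbb{R})$, there exists $R > 0$ such that $\int_{|y| \ge R} G_{\sigma,\beta}(y,t)\, dy < \varepsilon / \bigl(4 \|u_0\|_{\infty, Z} + 4 |z_0^{+}|_{Z} + 1\bigr)$. Next, the hypothesis $u_0(x') \to z_0^{+}$ as $x' \to +\infty$ gives $M > 0$ such that $|u_0(x') - z_0^{+}|_{Z} < \varepsilon/2$ whenever $x' > M$. Then for every $x > M + R$ and every $y$ with $|y| \le R$, we have $x - y > M$, so the inner norm is at most $\varepsilon/2$ on that region. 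I estimate
\begin{align*}
\bigl| u(t,x) - z_0^{+} \bigr|_{Z} \;\le\; \int_{|y| \le R} G_{\sigma,\beta}(y,t) \tfrac{\varepsilon}{2}\, dy + \int_{|y| \ge R} G_{\sigma,\beta}(y,t) \bigl( \|u_0\|_{\infty, Z} + |z_0^{+}|_{Z} \bigr)\, dy,
\end{align*}
and the first summand is bounded by $\varepsilon/2$ (total mass at most $1$) while the second is $< \varepsilon/2$ by the choice of $R$. Thus $\lim_{x \to +\infty} u(t,x) = z_0^{+}$, and the symmetric argument handles $-\infty$.

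I do not expect any real obstacle: the Bochner integral norm inequality $|\int g\, d\mu|_{Z} \le \int |g|_{Z}\, d\mu$ (valid because $G_{\sigma,\beta}(\cdot,t) \ge 0$ and $g = u_0(x-\cdot) - z_0^{+}$ is uniformly continuous and bounded with values in $Z$) is the only analytical input beyond positivity and integrability of $G_{\sigma,\beta}$, both already established in the paper. The only mild subtlety is that $u_0$ is $Z$-valued rather than scalar, which is why I work with $|\cdot|_{Z}$ under the integral rather than absolute values; this causes no complication. The argument also makes clear that the limit is uniform in $t$ on compact subsets of $(0, \infty)$, although that is not required for the statement.
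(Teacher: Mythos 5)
Your proof is correct and follows essentially the same route as the paper's: both rewrite $u(t,x)-z_0^{+}$ as a convolution against the probability density $G_{\sigma,\beta}(\cdot,t)$ and perform the standard $\varepsilon/2$ split between a region where the argument of $u_0$ is large (using the asymptotic hypothesis) and a tail region where the integrability of the kernel makes the mass small; the paper merely splits at $y=x-r$ instead of $|y|\le R$ and bounds the integrand on the tail by $2\|u_0\|_{\infty,Z}$, which are cosmetic differences. No gaps.
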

\begin{proof}
	We only prove for $z_0^+$, the $z_0^-$ case is similar. Let $\varepsilon>0$, there exists $x_{*}^{+}>0$ such that $|u_{0}(x)-z_{0}^{+}|_{Z}<\varepsilon$ for $x>x_{*}^{+}$.
	Before proving the limit, we need an estimate of $g_{\beta}(\xi)$. Taking $r>0$ large enough, we have
	\begin{align}
	\label{eq: gbbound}
	\int_{|\xi|> (\sigma t)^{-1/(2\beta)}r}g_{\beta}(\xi)d\xi<\varepsilon/(2|u_{0}|_{\infty,Z}),
	\end{align}
	Next, to study the asymptotic convergence, we analyze two cases, 
	if $x>x_{*}+r$ then,
	\begin{align*}
	|u(t,x)-z_{0}^{+}| & \le \int_{\mathbb{R}}G_{\sigma,\beta}(t,x-y)|u_{0}(y)-z_{0}^{+}|dy \\
	& = \int_{y>x-r}G_{\sigma,\beta}(t,x-y)|u_{0}(y)-z_{0}^{+}|dy \\
	& + \int_{y<x-r}G_{\sigma,\beta}(t,x-y)|u_{0}(y)-z_{0}^{+}|dy = I_{1} + I_{2}.
	\end{align*}
	Since $y>x-r>x_{*}^{+}$, we have that $|u_{0}(y)-z_{0}^{+}|<\varepsilon$ and therefore we can bound the first integral,
	\begin{align*}
	I_{1} \le \varepsilon \int_{\mathbb{R}}G_{\sigma,\beta}(t,x-y)dy = \varepsilon.
	\end{align*}
	For the second integral, we will use estimate \eqref{eq: gbbound}, and the norm of the initial condition $u_0$,
	\begin{align*}
	I_{2} & \le 2|u_{0}|_{\infty,Z}\int_{y<x-r}G_{\sigma,\beta}(t,x-y)dy = 
	2|u_{0}|_{\infty,Z}\int_{\xi>r}G_{\sigma,\beta}(t,\xi)d\xi \\
	& = 2|u_{0}|_{\infty,Z}\int_{|\xi'|> (\sigma t)^{-1/(2\beta)}r}g_{\beta}(\xi')d\xi'
	< \varepsilon
	\end{align*}
	Bounding both integrals we prove the result.
\end{proof}
\begin{lemma}
	\label{le: lim Nu}
	Let $u_{0}\in C_{\rm u}(\mathbb{R},Z)$ such that
	$\lim_{x\to\pm\infty}u_{0}(x)=z_{0}^{\pm}\in Z$. If $u(t)={\sf N}(t,t_0,u_{0})$,
	then $\lim_{x\to\pm\infty}u(t,x) = z^{\pm}(t)$, where $z^{\pm}(t)$ is the solution
	\eqref{eq: integral equation} with $z^{\pm}(0) = z_{0}^{\pm}$.
\end{lemma}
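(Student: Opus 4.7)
The plan is to exploit the fact that $\mathsf{N}$ acts pointwise in $x$. Because $F$ is autonomous and $F(t,u)(x)=F(u(x))$, for each fixed $x\in\mathbb{R}$ the function $s\mapsto u(s,x)=\mathsf{N}(s,t_0,u_0)(x)$ is the unique solution in $Z$ of the autonomous ODE $\dot z=F(z)$ with $z(t_0)=u_0(x)$. Let $\varphi_\tau$ denote the associated local flow on $Z$, so that $u(s,x)=\varphi_{s-t_0}(u_0(x))$ and, wherever it exists, $z^+(s)=\varphi_{s-t_0}(z_0^+)$. By symmetry I treat only $x\to+\infty$; the case $x\to-\infty$ is identical. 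Fix $t$ in the interval of existence of $u$.

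The first step is a uniform bound. Since $u\in C([t_0,t],C_{\rm u}(\mathbb{R},Z))$,
\[
R:=\sup_{s\in[t_0,t]}\|u(s)\|_{\infty,Z}<\infty,
\]
so that $|\varphi_{s-t_0}(u_0(x))|_Z=|u(s,x)|_Z\le R$ for every $(s,x)\in[t_0,t]\times\mathbb{R}$.

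The crucial intermediate step is to show that $z^+$ is defined throughout $[t_0,t]$. Suppose not; then there is $\tau\in(t_0,t]$ with $|z^+(s)|_Z\to\infty$ as $s\uparrow\tau$, so I may pick $s_*\in(t_0,\tau)$ with $|z^+(s_*)|_Z>R$. Since $u_0(x)\to z_0^+$ in $Z$, standard continuous dependence on initial data for the ODE $\dot z=F(z)$ on the compact interval $[t_0,s_*]$ yields $\varphi_{s_*-t_0}(u_0(x))\to z^+(s_*)$ as $x\to+\infty$. Consequently $|u(s_*,x)|_Z>R$ for all $x$ large enough, contradicting the uniform bound.

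With $z^+$ now known to exist on $[t_0,t]$, applying continuous dependence once more on the compact interval $[t_0,t]$ gives
\[
u(t,x)=\varphi_{t-t_0}(u_0(x))\longrightarrow\varphi_{t-t_0}(z_0^+)=z^+(t)\quad\text{as }x\to+\infty,
\]
which is the claim. The main obstacle is precisely the blow-up exclusion: the usual continuous-dependence statement requires the limit solution $z^+$ to be defined on the target interval, so it cannot be invoked until one knows $z^+$ does not blow up before time $t$; the uniform $C_{\rm u}$-bound on $u$ is exactly what makes this guarantee available \emph{a priori}.
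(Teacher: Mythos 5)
Your proof is correct and takes essentially the same route as the paper's: both reduce the claim to continuous dependence on initial data for the $Z$-valued ODE, using that $\mathsf{N}$ acts pointwise in $x$ so that $u(t,x)$ is the flow applied to $u_0(x)$. The one substantive addition is your a priori exclusion of blow-up of $z^{+}$ on $[t_0,t]$ via the uniform bound $R$ --- a step the paper's two-line proof leaves implicit --- which is a worthwhile tightening rather than a different method.
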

\begin{proof}
	We again consider only the $z^+$ case. 
	From continuous dependence of the initial data, for $\varepsilon>0$, there exists $\delta>0$ such that if $|z_{0}^{+}-z_{0}|_{Z}<\delta$, then $|z^{+}(t) - z(t)|_{Z}<\varepsilon$.
	Let $x_{*}^{+}\in\mathbb{R}$ such that if $x>x_{*}^{+}$, 
	$|u_{0}(x)-z_{0}^{+}|_{Z}<\delta$, then $|u(t,x)-z^{+}(t)|_{Z}<\varepsilon$.
\end{proof}
\begin{lemma}
	\label{le: lim un}
	Let $\{u_{n}\}_{n\in\mathbb{N}}\subset C_{\rm u}(\mathbb{R}^{d},Z)$ such that $u_{n}\to u$
	in $C_{\rm u}(\mathbb{R}^{d},Z)$. If for $n\in\mathbb{N}$, it holds $\lim_{x\to\pm}u_{n}(x)=z^{\pm}$, then $\lim_{x\to\pm}u(x)=z^{\pm}$.
\end{lemma}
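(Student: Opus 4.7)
The plan is to give a standard $\varepsilon/2 + \varepsilon/2$ triangle inequality argument exploiting the fact that $C_{\rm u}(\mathbb{R}^{d},Z)$-convergence is precisely uniform convergence, so limits at infinity are preserved.

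Fix $\varepsilon>0$. First I would use the hypothesis $u_{n}\to u$ in $C_{\rm u}(\mathbb{R}^{d},Z)$ to choose $n_{0}\in\mathbb{N}$ with
\begin{equation*}
\sup_{x\in\mathbb{R}^{d}}|u_{n_{0}}(x)-u(x)|_{Z} < \varepsilon/2.
\end{equation*}
Then, using the assumption $\lim_{x\to\pm\infty}u_{n_{0}}(x)=z^{\pm}$ for this fixed index $n_{0}$, I would pick $R>0$ so that whenever $|x|>R$ and $x$ has the appropriate sign, we have $|u_{n_{0}}(x)-z^{\pm}|_{Z}<\varepsilon/2$. By the triangle inequality,
\begin{equation*}
|u(x)-z^{\pm}|_{Z} \le |u(x)-u_{n_{0}}(x)|_{Z} + |u_{n_{0}}(x)-z^{\pm}|_{Z} < \varepsilon,
\end{equation*}
for such $x$, which gives $\lim_{x\to\pm\infty}u(x)=z^{\pm}$.

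There is no real obstacle here; the only thing to be careful about is that the $n_{0}$ must be chosen before $R$, since $R$ generally depends on $n_{0}$. The lemma is essentially the statement that the evaluation functionals ``limit at $+\infty$'' and ``limit at $-\infty$'' are continuous on the subspace of $C_{\rm u}(\mathbb{R}^{d},Z)$ consisting of functions that admit such limits, and it is this stability that will let us pass, in the proof of Proposition \ref{pr: lim u}, from the Lie--Trotter approximations $U_{h,n}$ (which inherit the asymptote at each step via Lemmas \ref{le: lim Su} and \ref{le: lim Nu}) to the actual mild solution $u(t)$.
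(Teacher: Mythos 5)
Your proof is correct and follows essentially the same $\varepsilon/2+\varepsilon/2$ triangle-inequality argument as the paper: fix $n_{0}$ from uniform convergence first, then choose the threshold for $x$ using the asymptote of $u_{n_{0}}$. Nothing to add.
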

\begin{proof}
	Let $\varepsilon>0$, we can take $n\in\mathbb{N}$ such that 
	$\|u-u_{n}\|_{\infty,Z}<\varepsilon/2$.
	Then there exists $x_{*}^{+}\in\mathbb{R}$ such that
	$|u_{n}(x)-z^{+}|_{Z}<\varepsilon/2$ if $x>x_{*}^{+}$. Therefore, 
	\begin{align*}
	|u(x)-z^{+}|_{Z} \le |u(x)-u_{n}(x)|_{Z} + |u_{n}(x)-z^{+}|_{Z} < \varepsilon.
	\end{align*}
\end{proof}
\begin{proof}\textbf{(Proof of Proposition \ref{pr: lim u})}
	Let $n\in\mathbb{N}$, $h=t/n$ and $\{U_{h,k}\}_{0\le k\le n}$, $\{V_{h,k}\}_{1\le k\le n}$ the sequences defined by \eqref{eq: L-T}.
	We claim that $\lim_{x\to\pm\infty}U_{h,k}(x) = z^{\pm}(kh)$ for $k=0,\dots,n$.
	Clearly, the assertion is true for $k=0$.
	If $\lim_{x\to\pm\infty}U_{h,k}(x) = z^{\pm}(kh)$, from Lemma \ref{le: lim Su}, we see that
	$\lim_{x\to\pm\infty} V_{h,k+1} = z^{\pm}(kh)$, 
	and using Lemma \ref{le: lim Nu} we obtain $\lim_{x\to\pm\infty}U_{h,k+1}(x)$.
	We conclude $z^{\pm}(t)=z^{\pm}(nh)=\lim_{x\to\pm\infty}U_{h,n}(x)$ and, since $U_{h,n}\to u(t)$,
	Lemma \ref{le: lim un} implies the result.
\end{proof}

\section*{Acknowledgement}
This work was partially supported by CONICET--Argentina, PIP 11220130100006.

\end{document}